\def\section{\@startsection{section}{1}%
  \z@{1.1\linespacing\@plus\linespacing}{.8\linespacing}%
  {\normalfont\Large\scshape\centering}}
\theoremstyle{plain}
\newtheorem*{conj*}{Root Groups Conjecture}
\newtheorem*{thm1.2}{(1.2) Theorem}
\newtheorem*{thm1.3}{(1.3) Theorem}
\newtheorem*{thm1.4}{(1.4) Theorem}
\newtheorem*{prop*}{Proposition}
\newtheorem{prop}{Proposition}[section]
\newtheorem{question}[prop]{Question}
\newtheorem{thm}[prop]{Theorem}
\newtheorem{cor}[prop]{Corollary}
\newtheorem{lemma}[prop]{Lemma}
\theoremstyle{definition}
\newtheorem{Def}[prop]{Definition}
\newtheorem*{Def*}{Definition}
\newtheorem{Defs}[prop]{Definitions}
\newtheorem{example}[prop]{Example}
\newtheorem{notation}[prop]{Notation}
\newtheorem*{notation*}{Notation}
\newtheorem{remark}[prop]{Remark}
\newtheorem{remarks}[prop]{Remarks}
\newcommand{\zz}{\mathbb{Z}}
\newcommand{\frakG}{\mathfrak{G}}
\newcommand{\frakM}{\mathfrak{M}}
\newcommand{\frakN}{\mathfrak{N}}
\newcommand{\ga}{\alpha}
\newcommand{\gb}{\beta}
\newcommand{\gc}{\gamma}
\newcommand{\gC}{\Gamma}
\newcommand{\gd}{\delta}
\newcommand{\gl}{\lambda}
\newcommand{\gvp}{\varphi}
\newcommand{\gr}{\rho}
\newcommand{\gs}{\sigma}
\newcommand{\gt}{\tau}
\newcommand{\nsg}{\trianglelefteq}
\newcommand{\lr}{\longrightarrow}
\newcommand{\ra}{\rightarrow}
\newcommand{\sminus}{\smallsetminus}
\newcommand{\lan}{\langle}
\newcommand{\ran}{\rangle}
\newcommand{\Aut}{{\rm Aut}}
\newcommand{\half}{\textstyle{\frac{1}{2}}}
\newcommand{\widebar}[1]{\overset{\mskip1mu\hrulefill\mskip1mu}{#1}
                \vphantom{#1}}
\newcommand{\W}{\widebar}
\newcommand{\Wt}{\widetilde}
\newcommand{\cl}{\gC^{\gvp}}
\newcommand{\invlim}{\varprojlim}
\numberwithin{equation}{section}
\begin{document}
\title[Normal closure and normalizer of a homomorphism]{Normal closure and injective normalizer of a group homomorphism}
\author[Emmanuel D.~Farjoun,\quad Yoav Segev]
{Emmanuel~D.~Farjoun\qquad Yoav Segev}

\address{Emmanuel D.~Farjoun\\
        Department of Mathematics\\
        Hebrew University of Jerusalem, Givat Ram\\
        Jerusalem 91904\\
        Israel}
\email{farjoun@math.huji.ac.il}

\address{Yoav Segev\\
        Department of Mathematics\\
        Ben Gurion University\\
        Beer Sheva 84105\\
        Israel}
\email{yoavs@math.bgu.ac.il}
\keywords{crossed module, central extension, normal map, automorphisms tower, relative Schur multiplier}
\subjclass[2010]{Primary: 20E22; Secondary: 20J06, 20F28, 18A40}

\begin{abstract}
Let $\gvp\colon\gC\to G$ be a homomorphism of groups.
We consider factorizations $\gC\xrightarrow{f} M\xrightarrow{g} G$
of $\gvp$ having  certain universal properties.
First we continue the investigation (see \cite{BHS}) of 
the case where $g$ is a universal normal map (our term for a crossed module).
Then we introduce and investigate a seemingly new {\em dual} case, where  $f$ is a universal normal map.
These two factorizations are natural generalizations
of the usual normal closure and normalizer of a subgroup.

Iteration of these universal factorizations yield 
certain towers associated to the map $\gvp$; we prove 
stability results for these towers.
In one of the cases we get a generalization of
the stability of
the automorphisms tower of a center-less group. 
The case where $g$ is a universal normal map 
is closely related  to hypercentral group extensions, 
Bousfield's localizations,  and the relative
Schur multiplier $H_2(G,\gC)=H_2(BG\cup_{B \gvp}{\rm Cone}(B\gC))$. 

Although our constructions here have 
strong ties to topological constructions
we  take here a group theoretical point of view.
\end{abstract}

\date{Version of November 2, 2014}
\maketitle

\section{Introduction and main results}\label{intro}

\noindent
Starting with  two standard constructions in group theory, 
namely the normal closure and the normalizer of a subgroup,  
we consider similar constructions for a general group homomorphism $\gvp\colon\gC\to G$.
We start with the
{\it free normal closure} of $\gvp$ (see below for its precise
relation to earlier works),  and  continue with the seemingly  new dual notion of {\it injective normalizer} of $\gvp$.

To settle the terminology,  we recall  the notion
of a {\it crossed module,} which in this paper we call {\it a normal map,} since  we
are trying to understand basic results about normal subgroups in the framework of general group maps.
Further motivation for the latter terminology was given in \cite{FS1} and comes from
topology: These maps have  a well-defined topological (or simplicial) group structure as homotopy cokernels or quotients $G//M$.

\begin{Def}\label{def normal map}
A {\it normal map} consists of a group homomorphism
\[
n\colon M\to G,
\]

together with an action of $G$ on $M$:
\[
\ell\colon G\to\Aut(M),
\]
which we call here {\it a normal structure} on $n$, such that  when denoting by $a^g$
the image of $a\in M$ under $\ell(g)$ for $g\in G$ (this notation
will prevail throughout this paper),
the following two requirement are satisfied.
\medskip

\begin{itemize}
  \item[(NM1)]  $(a^g)n=(an)^g$, for all $g\in G$ and $a\in M$.
\smallskip

  \item[(NM2)]  $a^{bn}=a^b$, for all $a, b\in M$.
\end{itemize}
{\it Note} that $a^g=a\ell(g),$ while $h^g=g^{-1}hg$ and $a^b=b^{-1}ab,$ for all $a, b\in M$ and
$h, g\in G$.  Note also that here we apply maps on the right.
\end{Def}

\noindent
Thus (see Lemma \ref{lem basic cm}) $M$ is a central extension of
the normal subgroup $n(M)\nsg G,$ coupled with a group action of $G$ (on $M$)
satisfying (NM1) and (NM2). 

The notion of a crossed module was introduced by J.~H.~C.~Whitehead (\cite{W1, W2, W3}).
He was motivated by attempts to capture the homotopy groups
of certain quotient spaces associated to a group homomorphism.
This notion is useful in many situations and has been
widely looked into,  
see, e.g., the book \cite{BHS}.

\setcounter{subsection}{1}
\subsection{An outline of the paper} 
In this paper we consider two decompositions via normal maps associated to a
given map of groups, one related to the usual  normal closure, the other to
the normalizer of a subgroup. We then study some of their properties and
consider
what happens upon repeating these constructions, proving some stability
results.

Let now $\gC\overset{\gvp}\ra G$ be a group homomorphism.  The {\it free normal closure} of $\gvp$ denoted
here by $\cl$ is a factorization
\begin{equation}\tag{FNC}\label{eq fnc}
\gC\overset{c_{\gvp}}\lr\cl\overset{\W\gvp}\lr G\qquad\quad(\gvp=c_{\gvp}\circ\W\gvp)
\end{equation}
of $\gvp,$ with $\W\gvp$ a normal map, having certain universal properties (see subsection \ref{sub nc} below).

The {\it injective normalizer} of $\gvp$  is a factorization
\begin{equation}\tag{IN}\label{eq in}
\gC\overset{\Wt\gvp}\lr N(\gvp)\overset{p_{\gvp}}\lr G\qquad\quad(\gvp=\Wt{\gvp}\circ p_{\gvp})
\end{equation}
with $\Wt\gvp$ a normal map, having certain universal properties (see subsection \ref{sub in} below).

As mentioned above  the ``free normal closure'' was
introduced and considered  in a  more
general setup: that of  {\it induced crossed module} as
in \cite{BH}.  In fact  if one takes $M=P$ in \cite[Definition 5.2.1, p.~109]{BHS},
then $f_*M$ is the present ``free normal closure" for the map $f$.   
Basic properties of the free normal closure
 were derived in \cite[Proposition 9 and 10]{BH} as well as in
\cite[Theorem 2.1]{BW1}, in chapter 5 of \cite{BHS} and in other papers.
We give the definition and the construction of the free normal closure, but
most of the details are deferred to Appendix \ref{app A}.  We need the basics of
the construction as we apply those in subsequent results, and to be  self contained.

The free normal closure and its repetitions have roots in topology. The first
was considered by  C.~Whitehead in his combinatorial homotopy work. The
tower of closures
  is related to the fundamental groups of  various approximations of spaces by 
nilpotent spaces as
in the work
of Bousfield and Kan \cite{BK}, the first-named author, and recently Goodwillie's
calculus tower \cite{G}. The decomposition we describe might allow for relative
versions of these well-known construction to be associated with maps of
spaces.

The notion of the {\it injective normalizer} is a dual notion.  It too has
strong topological background and analogues related to principal fibrations, to be considered elsewhere
\cite{F}.

We now briefly define the notions of the free normal closure
and of the injective normalizer.

\subsection{\bf The free normal closure of a group homomorphism}\label{sub nc}
Throughout this subsection let
\[
\gvp\colon \gC\to G,
\]
be a group homomorphism.  We associate to $\gvp$
a {\it factorization} as in equation \eqref{eq fnc}.
Furthermore $\W\gvp(\cl)=\lan\gvp(\gC)^G\ran,$ is the usual normal closure of
$\gvp(\gC)$ in $G$. Thus $\cl$ is a {\it central extension} of $\lan\gvp(\gC)^G\ran$,
coupled with a {\it group action of $G$} (on $\cl$) satisfying (NM1) and (NM2)
with respect to the map $n=\W\gvp$.

Moreover, the factorization $\gC \overset{c_{\gvp}}\lr\cl\overset{\W\gvp} \lr G$
is {\it universal}\, in the sense that  {\it any}\,   factorization
$\gC\overset{\psi}\lr M\overset{n}\lr G$ of $\gvp,$ with $n$
a normal map,
defines uniquely a normal morphism   $\cl\overset{\W{\psi}}\to M$ of normal maps
over  $G$ (see Definition \ref{def morphism of cm}) rendering the diagram
%
\begin{equation}\label{eq universality of nc}
\xymatrix{
&\gC\ar[ld]_{c_{\gvp}}\ar[rd]^{\psi}\\
\cl\ar[rd]_{\W{\gvp}}\ar[rr]^{\exists !\W{\psi}} && M\ar[dl]^{n}\\
&G\\
}
\end{equation}
%
commutative. In particular the {\it free normal closure is unique}.
The construction of $\cl$ is functorial
for the category of maps.
As an example we mention that if $\lan\gvp(\gC)^G\ran=G,$ then $\cl$ is just a central extension of $G$
together with a factorization as in equation \eqref{eq fnc}. 
In particular we prove (see Theorem \ref{thm center of fnc})
\setcounter{prop}{3}
\begin{thm}\label{thm surj fnc}
Suppose $\gvp\colon\gC\to G$ is a group homomorphism such
the normal closure $\lan\gvp(\gC)^G\ran=G$.  
Then the kernel of $\W\gvp$ is the relative homology
group $H_2(G,\gC)$ with respect to the map $\gvp$.
\end{thm}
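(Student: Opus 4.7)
The plan is to first show that $\Ker(\W\gvp)$ is central in $\cl$, so the short exact sequence below is a central extension of $G$, and then to identify this central kernel with the relative $H_2$ via the topological description $H_2(G,\gC) = H_2(BG \cup_{B\gvp}\mathrm{Cone}(B\gC))$ that is recorded in the abstract.

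\textbf{Centrality of the kernel.} By hypothesis, $\W\gvp(\cl) = \lan\gvp(\gC)^G\ran = G$, so $\W\gvp$ is surjective. Writing $K := \Ker(\W\gvp)$, for any $a \in K$ and any $b \in \cl$, axiom (NM2) gives $b^a = b^{a\W\gvp} = b^{1} = b$; since $b^a = a^{-1}ba$, this forces $a \in Z(\cl)$. Thus
\[
1 \to K \to \cl \xrightarrow{\W\gvp} G \to 1
\]
is a central extension of $G$.

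\textbf{Identification with $H_2(G,\gC)$.} Set $X := BG \cup_{B\gvp}\mathrm{Cone}(B\gC)$, so by definition $H_2(G,\gC) = H_2(X)$. Van Kampen applied to this pushout gives $\pi_1(X) = G/\lan\gvp(\gC)^G\ran = 1$; hence by Hurewicz $H_2(X) = \pi_2(X)$, and it suffices to exhibit $\pi_2(X) \cong K$. The crossed module $\W\gvp$ has a classifying space $B(\W\gvp)$ (in the sense of Loday / Brown--Higgins) with $\pi_1 = \mathrm{coker}(\W\gvp) = 1$, $\pi_2 = K$, and $\pi_i = 0$ for $i \geq 3$. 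The universal property of the free normal closure, applied to the natural crossed module factorization of $B\gvp$ through its mapping cone, translates into the statement that the canonical comparison map $X \to B(\W\gvp)$ is a $2$-equivalence. Consequently $\pi_2(X) = \pi_2 B(\W\gvp) = K$, and the result follows.

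\textbf{Main obstacle.} The delicate point is the $2$-equivalence between $X$ and $B(\W\gvp)$ in the second step; it is a form of the Brown--Loday / Brown--Higgins theorem identifying induced crossed modules with the $(\pi_1,\pi_2)$-type of mapping cones, so one must either invoke or reprove this comparison. In keeping with the paper's stated group-theoretic stance, an alternative route is to choose a free presentation $1 \to R \to F \to G \to 1$ together with a lift $\Wt\gvp\colon \gC \to F$ of $\gvp$, use the explicit Appendix~A construction to present $\cl$ as a quotient of $F$, and then read off $K$ directly as a Hopf-type quotient matching the relative Hopf formula for $H_2(G,\gC)$. Either way the core technical content is matching the universal crossed-module data with a homological invariant, and both routes are essentially forced by the universal property of $\cl$ established earlier.
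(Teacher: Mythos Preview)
Your argument is essentially correct, but it follows a different path from the one the paper takes. The paper does not touch classifying spaces or the Brown--Higgins $2$-type theorem at all. Instead, in \S5 it introduces the purely group-theoretic notion of a \emph{central $\gvp$-extension} of $G$ (a central extension $M\to G$ together with a lift $\gC\to M$ of $\gvp$), observes that under the hypothesis $\lan\gvp(\gC)^G\ran=G$ a factorization $\gC\to M\to G$ with $M\to G$ a normal map is exactly the same thing as a central $\gvp$-extension, and then matches the universal property defining $\cl$ with the universal property of the \emph{universal central $\gvp$-extension}. The identification of the kernel of the latter with $H_2(G,\gC)$ is not proved in the paper; it is imported from the companion paper \cite{FS3} (their Theorem~\ref{thm f-central}), where the classification of central $f$-extensions by $H^2(G,\gC;A)$ and the existence of a universal one with kernel $H_2(G,\gC;\zz)$ are established.

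Your route goes through $X=BG\cup_{B\gvp}\mathrm{Cone}(B\gC)$, Van~Kampen, Hurewicz, and the classifying space $B(\W\gvp)$ of the crossed module, reducing everything to the $2$-equivalence $X\simeq_2 B(\W\gvp)$. As you correctly flag, that last step is precisely the Brown--Higgins theorem identifying the induced crossed module with the fundamental crossed module of the pair $(X,B G)$ (equivalently \cite{BH}, \cite[Ch.~5]{BHS}, \cite{BW1}), and it is not a formal consequence of the universal property alone; you are right to isolate it as the place where the real work hides. What your approach buys is a direct geometric explanation of \emph{why} $H_2(G,\gC)$ appears, whereas the paper's approach stays group-theoretic and modular but pushes the homological identification into an external reference. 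Your alternative suggestion via a relative Hopf formula is in fact closer in spirit to what \cite{FS3} presumably does.
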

In the case where $G\ne\lan\gvp(\gC)^G\ran,$ we ask

\begin{question}
Let $\gvp\colon\gC\to G$ be a group homomorphism.
What can be said about the structure of $\cl$?  What is the kernel of $\W\gvp\colon\cl\to G$?
\end{question}

Recall from \cite{CDFS} the notion of $A$-cellularity, for an arbitrary group $A$.
In Proposition \ref{prop cellular} we prove:

\begin{prop}
The group $\cl$ is $\gC$-cellular.
\end{prop}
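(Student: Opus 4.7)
The plan is to realize $\cl$ as a colimit built from copies of $\gC$, and then invoke the closure of the class of $\gC$-cellular groups under colimits developed in \cite{CDFS}.

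From the explicit construction of $\cl$ given in Appendix \ref{app A}, the free normal closure may be presented as a quotient $F/P$, where
\[
F := \coprod_{g \in G}\gC_g
\]
is the free product of copies of $\gC$ indexed by $G$ (with $G$ acting by permuting the factors, and the structural map $F \to G$ sending $\gamma \in \gC_g$ to $g^{-1}\gvp(\gamma)g$), and $P$ is the normal subgroup of $F$ generated by the Peiffer relators for the resulting pre-crossed module $F \to G$.

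First I would show that $F$ is $\gC$-cellular. A free product of copies of $\gC$ is an iterated pushout of diagrams $\gC \leftarrow 1 \to \gC$; since the class of $\gC$-cellular groups contains both $\gC$ and the trivial group $1$ and is closed under arbitrary (small) colimits, $F$ lies in the class.

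Next I would express the surjection $F \onto F/P = \cl$ as a pushout of a map between $\gC$-cellular groups. Each Peiffer relator is a word in elements lying in finitely many of the factors $\gC_g \subset F$, hence lies in the image of a homomorphism from a finite free product of copies of $\gC$ into $F$. Assembling all such local contributions across the indexing pairs yields a $\gC$-cellular group $K$ (itself a coproduct of finite free products of copies of $\gC$) together with a homomorphism $K \to F$ whose image normally generates $P$. Then $\cl \cong F/P$ is the pushout of the diagram $1 \leftarrow K \to F$, hence is $\gC$-cellular by closure of the class under pushouts.

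The main technical point I expect is the bookkeeping needed to confirm that the image of the constructed map $K \to F$ normally generates exactly $P$, and the identification of the group-theoretic quotient $F/P$ with the categorical pushout along $K \to 1$; both are routine but must be checked. Combining these steps yields the $\gC$-cellularity of $\cl$.
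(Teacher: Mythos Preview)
Your overall strategy --- realize $\cl$ as a colimit built from copies of $\gC$ and invoke closure of the class of $\gC$-cellular groups under colimits --- is exactly the paper's. The gap is in your construction of the map $K\to F$. As you describe it, for each Peiffer relator $r$ you take the finitely many factors $\gC_{g_1},\dots,\gC_{g_k}$ in which $r$ is written and include their free product into $F$; the image of that inclusion is the entire subfactor $\gC_{g_1}*\cdots*\gC_{g_k}\le F$, which contains each $\gC_{g_j}$ itself, not merely a normal generating set for $\langle r\rangle^F$. The pushout $1\leftarrow K\to F$ therefore kills every factor appearing in some Peiffer word and collapses far past $F/P$. This is not a bookkeeping issue: there is no evident homomorphism from a free product of copies of $\gC$ into $F$ whose image lies inside $P$ and contains a given Peiffer word, since the assignment $\gc\mapsto \widehat{\gd}_h^{\,-1}\widehat{\gc}_g\widehat{\gd}_h\cdot\widehat{\gc}_{g\gvp(\gd)^h}^{\,-1}$ is not multiplicative in $\gc$.

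The paper's fix is to use a \emph{coequalizer} rather than a pushout to $1$. With $I=\gC\times G\times G$ one defines two homomorphisms
\[
e_1,\ e_2\colon\ \underset{i\in I}\star\gC_i\ \lr\ \underset{g\in G}\star\widehat\gC_g=\widehat F,
\]
by $e_1(\gc)=\widehat{\gc}_{g\gvp(\gd)^h}$ and $e_2(\gc)=\widehat{\gd}_h^{\,-1}\widehat{\gc}_g\widehat{\gd}_h$ for $\gc\in\gC_{(\gd;g,h)}$. Both are genuine homomorphisms (the first is an inclusion of a factor, the second is conjugation of such an inclusion), and the coequalizer in groups is $\widehat F$ modulo the normal closure of $\{e_1(\gc)e_2(\gc)^{-1}\}$, which is precisely the Peiffer subgroup. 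Since source and target are free products of copies of $\gC$ and coequalizers are colimits, $\cl$ is $\gC$-cellular. The coequalizer is the colimit that encodes ``identify $e_1(\gc)$ with $e_2(\gc)$''; a pushout along $K\to 1$ encodes ``kill the image of $K$'', which is the wrong operation for imposing relations of the form $a=b$.
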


\subsection*{\bf The free normal closures tower}\hfill
\medskip

\noindent
Notice that the process of taking the free normal closure can be iterated; this yields
the {\it (free) normal closures tower}: Let $\gvp_1:=\gvp, \gC_1=G$ and define inductively
$\gvp_{i+1}=c_{\gvp_i},$ and  $\gC_{i+1}=\gC^{\gvp_i}, i\ge 1$:

\begin{equation}\label{eq tower of fnc}
\xymatrix{
\gC\ar[dr]^>>>{\gvp_{k+1}}\ar@/^1ex/[drr]^{\gvp_k}\ar@/^2ex/[drrrr]^{\gvp_2}\ar@/^4ex/[drrrrr]^{\gvp_1}\\
\dots\ar[r]  &\gC_{k+1}\ar[r]^<<{\scriptscriptstyle{\W{\gvp_k}}} & 
\gC_k\ar[r] &
\dots \ar[r] & \gC_2\ar[r]^<<{{\scriptscriptstyle{\W{\gvp_1}}}} &G\\
\\
}
\end{equation}
Notice that diagram \eqref{eq tower of fnc} is commutative, the maps
$\W{\gvp_i}$ are normal maps and that $\gC_{i+1}$
is a central extension of the normal closure of $\gvp_i(\gC)$ in $\gC_i,$
for all $i\ge 1$.

Few points to note are:
\begin{itemize}
\item[(a)]
One can readily check (see Corollary \ref{cor eg}(1)) that if $\gvp$ is surjective then $\gC^{\gvp}=\gC/[\gC,\ker\gvp]$
and $c_{\gvp}\colon\gC\to \gC/[\gC,\ker\gvp]$ is the canonical homomorphism.
Thus if $G=1,$ then if we consider the normal closures tower we get
that $\gC_i=\gC/\gc_i(\gC),$ where $\gC=\gc_1(\gC)\ge\gc_2(\gC)\ge\dots$ is the
descending central series of $\gC$.
\end{itemize}
Thus the more challenging cases are when $\gvp$ is not surjective.
\begin{itemize}
\item[(b)]
In the case where  $\lan\gvp(\gC)^G\ran=G,$  {\it all}
the maps $\W{\gvp_i}$ are surjective, for integers $i\ge 1$
(see Lemma \ref{lem sfnc surj}(2)), so we get a series of central extensions
making diagram \eqref{eq tower of fnc} commutative. We prove (see Theorem \ref{thm series of fnc}):
\end{itemize}

\begin{thm}\label{thm tower of fnc}
Suppose that $\gC$ and $G$ are finite and that $G=\lan\gvp(\gC)^G\ran,$
then the normal closures  tower \eqref{eq tower of fnc} terminates
after a finite number of steps.
\end{thm}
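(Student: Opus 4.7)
My plan is to bound $|\gC_k|$ uniformly in $k$.  Since Lemma \ref{lem sfnc surj}(2) makes each $\W{\gvp_i}$ surjective, the sequence of orders $(|\gC_k|)_k$ is weakly increasing, so any uniform upper bound forces it to stabilize; and once $|\gC_{k+1}|=|\gC_k|$ the central kernel $K_k:=\ker\W{\gvp_k}$ is trivial, making $\W{\gvp_k}$ an isomorphism and the tower terminate.  The structural ingredients I would record first are: Lemma \ref{lem basic cm} makes each $K_i$ central in $\gC_{i+1}$, and Theorem \ref{thm surj fnc} identifies $K_i\cong H_2(\gC_i,\gC)$ with the relative Schur multiplier.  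Since $\gC$ and $\gC_i$ are finite, $H_2(\gC_i,\gC)$ is finite, and an induction on $i$ shows each $\gC_k$ is finite.

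To obtain the uniform bound I would split $|\gC_k|=|P_k|\cdot|\gC_k/P_k|$, where $P_k:=\bigcap_n\gC_k^{(n)}$ is the perfect core.  Central surjections preserve the derived series, so $\W{\gvp_k}$ restricts to a central surjection $P_{k+1}\twoheadrightarrow P_k$ of perfect groups.  Consequently every $P_k$ is a perfect central extension of $P_1=G^{\infty}$, the perfect core of $G$.  Since $G^{\infty}$ is finite and perfect, it admits a finite universal central extension $\widetilde{G^{\infty}}$ through which every perfect central extension of $G^{\infty}$ factors, so $|P_k|\le|\widetilde{G^{\infty}}|$ uniformly in $k$.

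For the solvable part $\gC_k/P_k$, I would combine the relative long exact sequence
\[
H_2(\gC)\to H_2(\gC_k)\to K_k\to H_1(\gC)\to H_1(\gC_k)
\]
underlying Theorem \ref{thm surj fnc} with the five-term sequence for the central extension $\gC_{k+1}\to\gC_k$ with central kernel $K_k$.  These pin the growth of the abelianizations $\gC_k^{{\rm ab}}$ to subquotients of the finite group $H_1(\gC)$; iterating the same analysis through the (finite) derived series of $G/G^{\infty}$ should bound every solvable layer in terms of $|H_1(\gC)|$, $|H_2(\gC)|$, and the structure of $G$, uniformly in $k$.  Combined with the perfect-core bound, this yields a uniform bound on $|\gC_k|$, and the first paragraph then concludes.

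The hard step is the last one: one must convert the homological bounds on each successive abelianization into a uniform bound on the whole solvable quotient $\gC_k/P_k$, controlling how the central subgroup $K_k$ distributes between the commutator subgroup (where it is absorbed into $P_{k+1}$ at no cost to the bound) and the new abelianization (where it would strictly enlarge $\gC_k^{{\rm ab}}$).  This is precisely the place where one has to exploit the specific universal property of the free normal closure, rather than treating $\gC_{k+1}\to\gC_k$ as a generic central extension.
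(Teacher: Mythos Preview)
Your overall strategy---obtain a uniform bound on $|\gC_k|$ and use that the sequence $|\gC_k|$ is weakly increasing---matches the paper's. Your treatment of the perfect core $P_k$ is also correct (note that the composite $P_k\to G^{\infty}$ really does have central kernel: by Gr\"un's lemma $Z_2(H)=Z(H)$ for perfect $H$, so inductively each $P_k$ is a quotient of $\widetilde{G^{\infty}}$).

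The genuine gap is exactly where you flag it: the solvable quotient $\gC_k/P_k$. The exact sequences you write bound $|K_{k-1}|$ only through $|H_2(\gC_{k-1})|$, which is not a priori controlled, and bounding each successive abelianization does not by itself bound the derived length or the total size of $\gC_k/P_k$ uniformly in $k$. Saying one must ``exploit the specific universal property of the free normal closure'' is an acknowledgment that something is missing, not an argument; nothing in the outline supplies the needed control. (In the extreme case $G^{\infty}=1$ your perfect-core bound is vacuous and the entire burden falls on this unfinished part.)

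The paper bypasses the perfect/solvable decomposition entirely. Since the kernel of the composite $\gC_k\to G$ is built from iterated central kernels, it lies in the hypercenter $Z_{\infty}(\gC_k)$, so $\gC_k/Z_{\infty}(\gC_k)$ is a quotient of $G$. A generalized Baer theorem of Kurdachenko--Otal--Subbotin (Theorem~\ref{thm hypercentral}) then yields $L\nsg\gC_k$ with $|L|\le f(|G|)$ and $\gC_k/L$ nilpotent. By Lemma~\ref{lem sfnc surj}(1) the image of $\gC$ normally generates $\gC_k$, hence also $\gC_k/L$; but in a nilpotent group any normally-generating subgroup is the whole group (Lemma~\ref{lem normal closure nil}), so $|\gC_k/L|\le|\gC|$ and $|\gC_k|\le|\gC|\cdot f(|G|)$. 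This single hypercenter/Baer step replaces the solvable-quotient analysis that your approach leaves open.
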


Note now  that Example \ref{eg inj abelian} shows that if $\gC$ and $G$
are non-trivial finite abelian groups and $\gvp$ is {\it not} surjective,
then the size of the (finite abelian) groups $\gC_i$ of diagram \eqref{eq tower of fnc} grows
to infinity.   However we ask:

\begin{question}\footnotemark\label{ques inv limit}
Suppose that $\gC$ and $G$ are finite.  Is it true that the inverse limit
$\gC_\infty :=\varprojlim\gC_i,$ where $\gC_i$ are as in diagram \eqref{eq tower of fnc}, is finite?
\end{question}
\footnotetext{The answer to Question \ref{ques inv limit} is given in \cite{FS2}.}
It is interesting to note the behavior of the normal closures tower on abelianiziations.
In  Proposition \ref{prop inj abelianizations} we prove:
 
\begin{prop}
Let $\gC_\infty :=\invlim\gC_i,$ and let $\gvp_{\infty}\colon\gC\to\gC_{\infty}$
be the map obtained by the universal property of $\gC_{\infty}$. Then,
\begin{enumerate}
\item
the map $(c_{\gvp})_{ab}\colon \gC_{ab}\to\cl_{ab}$ induced by $c_{\gvp}$
is injective;

\item
the map $(\gvp_{\infty})_{ab}\colon\gC_{ab}\to (\gC_{\infty})_{ab}$ induced
by $\gvp_{\infty}$ is injective.
\end{enumerate}
\end{prop}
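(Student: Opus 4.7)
The plan is to reduce part (1) to the universal property of the free normal closure by producing an explicit factorization of $\gvp$ through a normal map whose first factor is manifestly injective on abelianizations. Part (2) will then follow formally from part (1) via the universal property of the inverse limit.

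For part (1), set $H:=\lan\gvp(\gC)^G\ran$ and $M:=\gC_{ab}\times H$. Define $n\colon M\to G$ by $n(a,h):=h$ (followed by the inclusion $H\hookrightarrow G$), and let $G$ act on $M$ trivially on the first factor and by conjugation on the second. A short routine verification confirms (NM1) and (NM2), so $n$ is a normal map. Define $\psi\colon\gC\to M$ by $\psi(\gc):=(\bar\gc,\gvp(\gc))$, where $\bar\gc$ denotes the image of $\gc$ in $\gC_{ab}$; this is a homomorphism, and $\gC\xrightarrow{\psi}M\xrightarrow{n}G$ is a factorization of $\gvp$. Since $M_{ab}=\gC_{ab}\times H_{ab}$, the induced map
\[
\psi_{ab}\colon\gC_{ab}\lr\gC_{ab}\times H_{ab},\qquad \bar\gc\mapsto\bigl(\bar\gc,\overline{\gvp(\gc)}\bigr),
\]
is injective because its first coordinate is the identity. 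By diagram \eqref{eq universality of nc}, $\psi$ factors through $c_\gvp$ via a normal morphism $\W\psi\colon\cl\to M$; by the functoriality of abelianization, $\psi_{ab}$ factors through $(c_\gvp)_{ab}$, and so $(c_\gvp)_{ab}$ must be injective.

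For part (2), applying part (1) to each $\gvp_i$ shows that $(\gvp_{i+1})_{ab}=(c_{\gvp_i})_{ab}\colon\gC_{ab}\to(\gC_{i+1})_{ab}$ is injective for every $i\ge 1$. By the universal property of the inverse limit, the projections $\pi_i\colon\gC_\infty\to\gC_i$ satisfy that $\gvp_i$ factors as $\gvp_\infty$ followed by $\pi_i$; hence $(\gvp_i)_{ab}$ factors through $(\gvp_\infty)_{ab}$. Since $(\gvp_i)_{ab}$ is injective for any $i\ge 2$, the map $(\gvp_\infty)_{ab}$ is injective as well.

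The main---and essentially only---obstacle is finding the normal map in part (1): the naive attempt of routing $\gvp$ through $\gC\to\gC_{ab}$ fails since $\gC_{ab}$ carries no natural map to $G$ making it a crossed module, and the point is to couple $\gC_{ab}$ with the normal closure $H\nsg G$ via the direct product $M=\gC_{ab}\times H$ so that the crossed-module axioms are satisfied while $\gC_{ab}$ survives as a direct summand. Once this crossed module is identified, both parts are formal consequences of universal properties.
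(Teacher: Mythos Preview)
Your proof is correct, and in fact more direct than the paper's. For part (1), the paper proceeds by abelianizing both $\gC$ and $G$, invoking the pull-back Lemma \ref{lem ab} to produce a normal map $M\to G$ from the free normal closure of $\gvp_{ab}\colon\gC_{ab}\to G_{ab}$, and then appealing to Example \ref{eg inj abelian} (the explicit computation of the free normal closure for maps between abelian groups) to conclude that $c_{\gvp_{ab}}$ is injective. Your construction bypasses both the pull-back and the abelian example: the crossed module $\gC_{ab}\times H$ with trivial $G$-action on the first factor (which is precisely the construction of Lemma \ref{lem basic cm}(4) applied to the inclusion $H\hookrightarrow G$) already has $\gC_{ab}$ as a direct summand of its abelianization, so injectivity of $\psi_{ab}$ is immediate. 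Your argument for part (2) is essentially the same as the paper's. The paper's route has the minor advantage of illustrating the functoriality of the free normal closure along maps of the base (via Lemma \ref{lem ab}), but your route is shorter and self-contained.
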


Going back to question \ref{ques inv limit}, it is reasonable to expect that the normal closures tower is pro-equivalent to a fixed finite
group that gives the {\em universal subnormal factorization} $\gC\xrightarrow{\gvp_{\infty}} \gC_{\infty}\to G,$  of the original map.  
For a general group map (not necessarily of finite groups),  
we should get a  relative version  of  the nilpotent and the Bousfield completion
of a group $\gC$ that is closely related to the tower of fundamental 
groups of a topological (``relative nilpotent'') completion tower.

The existence and uniqueness of the free normal closure
are  recalled in \S\ref{sect free nc}  and Appendix \ref{app A}.
Some of its  properties are given in \S\ref{sub some properties}.
We note already at this early stage that if $\gC\nsg G$ and $\gvp$
is inclusion, we {\it do not} always get that $\cl=\gC$ (see Example \ref{eg inj abelian}).

\setcounter{subsection}{9}
\subsection{{\bf The injective normalizer of a group homomorphism}}\label{sub in}
Here we add  a construction which, in some sense, is ``dual'' to
the construction of the free normal closure. Namely with every group map $\gvp\colon\gC\to G$
we associate a factorization as in equation \eqref{eq in}.
Further, this factorization is {\it injective}
in the sense that  any   factorization $\gC\to H\to G$ of $\gvp$ with $\gC\to H$
a normal map
defines uniquely a normal morphism  $H\to N(\gvp)$.
In particular the injective normalizer is unique.
In this case the construction is funcotrial in the variable $G,$
assuming $\gC$ is fixed.

The image $p_{\gvp}(N(\gvp))$ is always a subgroup of the normalizer
$N_G(\gvp(\gC)),$ but is not always equal to it (see Lemma \ref{lem useful}(1) and Remark \ref{rem triv. map}(3)).
As opposed to the free normal closure, $N(\gvp)$  {\it does  agree} with the usual normalizer $N_G(\gvp(\gC))$
if $\gvp$ is injective.  

As in the case of the free normal closure we can iterate the process
of taking the injective normalizer and we obtain the {\it (injective) normalizers tower}

\begin{equation}\label{eq inj nor tower}
\xymatrix{
\gC^0\ar[r]^{\Wt{\gvp_0}}\ar[rrrd]_{\gvp_0} & \gC^1\ar[r]^{\Wt{\gvp_1}}\ar[rrd]^{\gvp_1} & \gC^2\ar[r]\ar[rd]
& \gC^3\ar[r]\ar[d] &\dots
\ar[r] & \gC^{\ga}\ar[r]^{\Wt{\gvp_{\ga}}}\ar[lld]_{\gvp_{\ga}}& \dots\\
& & &  G
}
\end{equation}

\noindent
where here $\gC^0=\gC$ and $\gvp_0=\gvp$.  Further if $\ga\ge 1$ is not a limit ordinal,
then $\gC^{\ga}:=N(\gvp_{\ga-1})$ and
$\gvp_{\ga}:=p_{\gvp_{\ga-1}}$.
If $\ga$ is a limit ordinal then we take the obvious direct limit:
$\gC^{\ga}:=\varinjlim_{\gb<\ga}\gC^{\gb},$ and $\gvp_{\ga}\colon\gC^{\ga}\to G$
is the map obtained from the universal property of the direct limit.
Note that $\Wt{\gvp_{\ga}}$ is a normal map for every ordinal $\ga$ which is not a limit ordinal.

We note that if $\gvp\colon \gC\to 1,$ then $N(\gvp)=\Aut(\gC),$ and
$p_{\gvp}\colon N(\gvp)\to 1$.  Thus the   normalizers tower
is the automorphism tower of the group $G$.  We prove  (see Theorem \ref{thm nor tower})
a relative version of the well known stability result of the automorphism tower of a finite group.

\setcounter{prop}{10}
\begin{thm}
Let $\gvp\colon\gC\to G$ be a group homomorphism.  If $\, \gC$ and $G$ are finite
and $Z(\ker\gvp)=1$,
then the  normalizers tower
terminates after a finite number of steps.
\end{thm}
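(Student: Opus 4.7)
The strategy is to adapt Wielandt's classical theorem on the stability of the automorphism tower of a finite centerless group. As the paper observes just before the theorem, when $\gvp\colon\gC\to 1$ the normalizers tower is exactly the automorphism tower of $\gC$ and $Z(\ker\gvp)=Z(\gC)=1$ is precisely Wielandt's hypothesis; thus the statement is a genuine relative generalization of Wielandt, and I would prove it by lifting his argument to the relative setting.

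First, write $K^{\alpha}=\ker\gvp_{\alpha}$. I would begin by verifying, from the universal property of the injective normalizer and the terminology \emph{injective normalizer}, that each $\Wt{\gvp_{\alpha}}\colon\gC^{\alpha}\to\gC^{\alpha+1}$ is injective, so that one obtains an ascending chain of finite groups $\gC^0\subseteq\gC^1\subseteq\gC^2\subseteq\dots$ all lying over $G$. Passing to kernels then yields an ascending chain $K^0\subseteq K^1\subseteq K^2\subseteq\dots$ of finite groups.

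The core of the argument is to show by transfinite induction on $\alpha$ that $Z(K^{\alpha})=1$. The base case is the hypothesis. For the successor step I would use the universal property of $N(\gvp_{\alpha})$, together with the normal-map axioms (NM1), (NM2), to show that $C_{\gC^{\alpha+1}}(K^{\alpha})\cap K^{\alpha+1}$ is trivial: any putative non-trivial element $z$ would, via (NM1) and (NM2) combined with the action $\ell$ coming from the normal structure on $\Wt{\gvp_{\alpha}}$, furnish a factorization of $\gvp_{\alpha}$ through a normal map that contradicts the universality. Combined with $Z(K^{\alpha})=1$ this gives $Z(K^{\alpha+1})=1$. At limit ordinals the centerless property passes through the union, since any element of the direct limit centralizing $K^{\alpha}$ would already centralize some $K^{\beta}$ with $\beta<\alpha$.

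Once $C_{K^{\alpha+1}}(K^{\alpha})=1$ is in hand, conjugation produces an injection $K^{\alpha+1}\hookrightarrow\Aut(K^{\alpha})$, so $|K^{\alpha+1}|\le|\Aut(K^{\alpha})|$. Wielandt's classical counting argument then shows that the ascending chain of centerless finite groups $K^0\subseteq K^1\subseteq\dots$ stabilizes after finitely many steps, say at $\alpha_0$. Once $K^{\alpha_0+1}=K^{\alpha_0}$, the universal property of $N(\gvp_{\alpha_0})$ forces $\gC^{\alpha_0+1}=\gC^{\alpha_0}$: a prospective new element would centralize $K^{\alpha_0}$ and project trivially into $G$ relative to $\gvp_{\alpha_0}(\gC^{\alpha_0})$, hence represent no new data for the injective normalizer. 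The tower then terminates at $\alpha_0$.

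The main obstacle is the centerless propagation in Step 2. In Wielandt's absolute case ($G=1$) this is immediate from $G\hookrightarrow\Aut(G)$ having trivial centralizer, but in the relative case $\gC^{\alpha+1}$ is not simply $\Aut(K^{\alpha})$: it must also accommodate the map to $G$ and the action $\ell$ coming from the normal structure. Disentangling how the conjugation action of $K^{\alpha+1}$ on $K^{\alpha}$ interacts with this external action of $G$, in order to exhibit the right universal map that kills prospective centralizing elements, is where the real work lies.
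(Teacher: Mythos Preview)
Your overall strategy matches the paper's: pass to the chain of kernels $K^i=\ker\gvp_i$, establish $K^i\nsg K^{i+1}$ with $C_{K^{i+1}}(K^i)=1$, and then invoke Wielandt's bound on groups containing a given subnormal subgroup with trivial centralizer. But two steps in your execution do not go through as written.

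First, the paper does \emph{not} argue the key lemmas via the universal property; it uses the explicit model $N(\gvp)\subseteq\Aut(\gC)\times N_G(\gvp(\gC))$ of compatible pairs $(\gt,g)$. From this one reads off directly that $\ker\Wt\gvp=Z(\gC)\cap\ker\gvp$ (giving injectivity once $Z(\ker\gvp)=1$), that $\ker p_\gvp$ consists of pairs $(\gt,1)$ with $\gt$ centralizing $\gC/\ker\gvp$, and that any such $\gt$ which in addition centralizes $\Wt\gvp(\ker\gvp)$ must lie in
\[
C_{\Aut(\gC)}(\gC/\ker\gvp)\cap C_{\Aut(\gC)}(\ker\gvp)\;\cong\;Z^1\bigl(\gC/\ker\gvp,\,Z(\ker\gvp)\bigr)=1.
\]
This cocycle identification is the actual content of the step $C_{K^{i+1}}(K^i)=1$. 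Your sketch ``furnish a factorization of $\gvp_\alpha$ through a normal map that contradicts universality'' does not supply this; it is not clear what factorization you have in mind, and without the explicit model there is no obvious way to see that an element of $\ker p_\gvp$ acting trivially on $\gC$ must be the identity.

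Second, and more seriously, your termination step is wrong. From $K^{\alpha_0+1}=K^{\alpha_0}$ alone you cannot conclude $\gC^{\alpha_0+1}=\gC^{\alpha_0}$: a new element of $\gC^{\alpha_0+1}$ need not centralize $K^{\alpha_0}$ nor ``project trivially into $G$''; it may simply enlarge the image $\gvp_{\alpha_0+1}(\gC^{\alpha_0+1})$ inside $G$. The paper closes this by a separate, easy observation: the images $\gvp_i(\gC^i)$ form an ascending chain of subgroups of the \emph{finite} group $G$, hence stabilize at some $t$. Taking $m=\max\{s,t\}$ (where $s$ stabilizes the kernels via Wielandt) then gives $\gC^m\cong\gC^{m+1}$, since both kernel and image of $\gvp_m$ agree with those of $\gvp_{m+1}$ under the injective inclusion $\Wt{\gvp_m}$.
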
 

It turns out that if $\gC\to G$ is inclusion and $G$ is finite, then the normalizers
tower is just the tower of the usual normalizers which of course stops.
Recall that by \cite{Ha},  the automorphism tower of any group terminates 
(see also more recent related works of J.~D.~Hamkins).
One expects that the transfinite normalizer tower terminates as well, for an arbitrary map $\gvp,$
in the spirit of Shelah \cite{KS}.

The following remark indicates that one can detect whether $\gvp\colon\gC\to G$ is
a normal map using the injective normalizer $N(\gvp)$:

\begin{remark}\label{rem retract}
The map $\gvp\colon\gC\to G$ is a normal map iff $\gvp$ is a retract of $\Wt{\gvp},$
i.e., there exists a section $s\colon G\to N(\gvp)$ such that the following diagram
\[
\xymatrix{
\gC\ar[r]^{=}\ar[d]_{\gvp} &\gC\ar[r]^{=}\ar[d]^{\Wt{\gvp}} &\gC\ar[d]^{\gvp}\\
G\ar[r]^s & N(\gvp)\ar[r]^{p_{\gvp}} & G\\
}
\]
is commutative (so $s\circ p_{\gvp}={\rm id}_{G}$). See Lemma \ref{lem nmap}.
\end{remark}

We conclude the introduction with a remark putting our work in a more general framework.

\begin{remark}\label{rem adjoint}
We note that one can view the two constructions in this paper as functors 
adjoint to
the corresponding forgetful functors. 
However, we give  and use here   explicit constructions of  these two adjoint functors. 
These constructions are  the main tools
used to demonstrate some of the properties of these adjoint functors.

Let $\frakN\frakM$  (resp.~$\frakN\frakM^{\gC}$) be the category of normal maps (resp.~normal  maps from a {\it fixed group}  $\gC$)  
of groups, and $\frakG^2$  (resp.~$\frakG^{\gC}$)  be the category of maps 
of groups (resp.~maps from a {\it fixed group} $\gC$).

Consider the the forgetful (``Underlying'') functor  to the category of group maps:
\[
U\colon \frakN\frakM\to \frakG^2.
\]
It is not hard to see that it commutes with inverse limits, but does not commute in general  with direct limits.
However the  restriction   $U^\gC$ of  $U$  to  $\frakN\frakM^{\gC}$    
does commutes with direct limits. Thus one expects that $U$
has a left adjoint and that $U^\gC$ has a right adjoint. The left adjoint of $U$ namely  $cl: \frakG^2\to \frakN\frakM$ 
is called here the {\it free normal closure} and is denoted by $(\gvp\colon\gC\to G) \mapsto  (\cl\to G)$.

The right adjoint of $U^{\gC}$ namely
${\rm nor}^{\gC}: \frakG^{\gC} \to \frakN\frakM^{\gC}$
is call here the {\it injective normalizer} 
and is denoted by $(\gvp\colon\gC\to G) \mapsto  (\gC\to N(\gvp))$.

The factorizations of $\gC\to G$ by the normalizer and the normal closure arise as the natural augmentation
of these functors.

Now for these  two functors one can consider as usual ``algebras'' namely objects which are
retract of composition  $U\circ F$ where $F$ is one of the above adjoint functors to $U$.
It turns out (see Remark \ref{rem retract} above), 
that a retract of the ${\rm nor}^{\gC}$ is exactly a normal map namely such a retraction  exactly equips  a map with a normal structure. 
It is not clear what ``algebra''  is given by  a retract of $U\circ cl.$
\end{remark}

\section{Preliminaries: Normal maps (Crossed modules)}\label{sect cm}

Recall from Definition \ref{def normal map} the notion of a normal map.

\begin{lemma}\label{lem basic cm}
Let $n\colon M\to G$ be a normal map.  Then
\begin{enumerate}
\item
$\ker(n)\le Z(M),$ and $\ker(n)$ is a $G$-invariant subgroup of $M$;

\item
$n(M)\nsg G$ and the map $n\colon M\to n(M)$ is a normal map;
\item
if $N\le M$ is a $G$-invariant subgroup of $M,$ then the 
restriction $n\colon N\to G$ is a normal map with the same normal
structure, restricted to $N$;

\item
if $V$ is an abelian group, then the map $($also denoted $n$$)$
$n\colon M\times V \to G,$ defined by $(a,v)n=an,$
is a normal map with the normal structure $(a,v)^g=(a^g,v),$ for all
$a\in M, v\in V$ and $g\in G$.
\end{enumerate}
\end{lemma}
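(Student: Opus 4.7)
For this lemma each of the four parts is a direct verification from axioms (NM1) and (NM2), so the plan is to lay out each part as a short check, being careful about which axiom is used where.

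For part (1), the plan is to start with $G$-invariance of $\ker(n)$: for $a\in\ker(n)$ and $g\in G$, apply (NM1) to get $(a^g)n=(an)^g=1^g=1$, so $a^g\in\ker(n)$. For centrality, I would exploit (NM2) \emph{with the roles of $a$ and $b$ swapped}: for any $b\in M$ and $a\in\ker(n)$, (NM2) gives $b^{an}=b^a$. Since $an=1$, the left side is $b\ell(1)=b$, so $b=a^{-1}ba$ for all $b\in M$, i.e.\ $a\in Z(M)$. This interchange of roles is the only nonobvious move in the whole lemma and is where I expect a reader to pause.

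For part (2), normality of $n(M)$ in $G$ is immediate from (NM1): $(an)^g=(a^g)n\in n(M)$. To view $n\colon M\to n(M)$ as a normal map, I would restrict the structure map $\ell\colon G\to\Aut(M)$ along the inclusion $n(M)\hookrightarrow G$ and note that (NM1) and (NM2) for the restricted data follow from the same axioms for the original data (since (NM2) only involves elements of the form $bn\in n(M)$ anyway). For part (3), the $G$-action restricts to $N$ by hypothesis, and both (NM1) and (NM2) for the restricted map $n|_N$ are inherited as statements about elements of $M$ that happen to lie in $N$; there is nothing to check beyond noting that the codomain of the action is now $\Aut(N)\subset\Aut(M)$ on the relevant subgroup, which is automatic from $G$-invariance of $N$.

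For part (4), the plan is a bookkeeping check. I would first verify that $(a,v)\mapsto(a^g,v)$ really defines a $G$-action on $M\times V$ (it factors through the action on the first coordinate). Then (NM1) is
\[
((a,v)^g)n=(a^g,v)n=(a^g)n=(an)^g=((a,v)n)^g.
\]
For (NM2), the left side is $(a,v)^{(b,w)n}=(a,v)^{bn}=(a^{bn},v)=(a^b,v)$ using (NM2) for $n\colon M\to G$, while the right side is the conjugate inside $M\times V$, namely $(b^{-1}ab,\,w^{-1}vw)=(a^b,v)$, where the second coordinate collapses precisely because $V$ is abelian. So abelianness of $V$ enters exactly once, at this last equality, and is unavoidable because (NM2) forces the induced action of $V$ on itself (via $n$) to be trivial. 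No step presents a real obstacle; the role-swap in part (1) is the only place requiring a little thought.
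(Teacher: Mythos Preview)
Your proof is correct and matches the paper's argument essentially verbatim: both use (NM1) for $G$-invariance of $\ker(n)$ and normality of $n(M)$, (NM2) for centrality of $\ker(n)$, and the same elementwise checks for part (4). The ``role swap'' you flag in part (1) is just a relabeling of the variables in (NM2) (the paper writes the kernel element as $b$ and shows $a^b=a^{bn}=a$ for all $a$), so it is not a genuine additional idea---but the argument is the same.
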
 
\begin{proof}
Part (1) is well known: if $b\in\ker n,$ then, by (NM2), $a^b=a^{bn}=a,$ for all
$a\in M,$ so $b\in Z(M)$. Also, if $a\in\ker(n),$ then by (NM1),
$(a^g)n=(an)^g=1,$ so $a^g\in\ker(n)$.   Part (2) is also well known:
By (NM1) we have $(n(a))^g=n(a^g),$ for all $a\in M$ and $g\in G$.
It is also clear that the second part of (2) holds.
Part (3) is obvious, simply observe that (NM1) and (NM2) hold
with $M$ replaced by $N$.

For part (4) we check that
\[
((a,v)^g)n=(a^g,v)n=(a^g)n=(an)^g=((a,v)n)^g,
\]
so (NM1) holds.  Also,
\[
(a,v)^{((b,w)n)}=(a,v)^{bn}=(a^{bn},v)=(a^b,v)=(a,v)^{(b,w)}
\]
so (NM2) holds as well.
\end{proof}

\begin{remark}\label{rem surj}
Let $n\colon M\to G$ be a surjective map such that $\ker(n)\le Z(M)$.
Then there is a natural action of $G$ on $M,$ where $a^g=a^b,$
with $a\in M,$ $g\in G$ and $b\in M$ is an element such that $n(b)=g$.  It is easy 
to check that this definition is independent of the choice of $b,$ and 
that $n$ becomes a normal map over $G$.  In fact the above is the unique normal structure on $n$.
\end{remark}

We require the following well established notions of morphisms between normal maps.

\begin{Def}\label{def morphism of cm}
Let $n_i\colon M_i\to G_i,$ $i=1, 2,$ be two normal maps.
A {\it normal morphism} from $n_1$ to $n_2$ is a pair of maps
$(\mu, \eta)$ such that the diagram
\[
\xymatrix{
M_1\ar[r]^{\mu}\ar[d]_{n_1} & M_2\ar[d]^{n_2}\\
G_1\ar[r]^{\eta} &G_2\\
}
\] 
commutes, and such that
$\mu(m_1^{g_1})=(\mu(m_1))^{\eta(g_1)},$ for all $m_1\in M_1$ and $g_1\in G_1$.
If $M_1=M_2,$ we always assume that $\mu$ is the identity map,
and if $G_1=G_2,$ we always assume that $\eta$ is the identity map.
\end{Def}

\begin{lemma}\label{lem ab}
Suppose we are given a commutative diagram
\begin{equation}\label{diag preab}
\xymatrix{
\gC\ar[rr]^{\gvp}\ar[dd]_{\mu}&& G\ar[dd]^{\eta}\\
& M'\ar[rd]^{n'}\\
\gC'\ar[rr]^{\gvp'}\ar[ru]^{\psi'} && G'
}
\end{equation}
with $n'$ a normal map, and let 

\[
\xymatrix{
M\ar[r]^{n}\ar[d]_{\pi_2} & G\ar[d]^{\eta}\\
M'\ar[r]^{n'} & G'
}
\]
be a pull back diagram.

Then $n$ is a normal map having the normal structure $(m',h)^g=((m')^{g\eta},h^g),$
for all $(m',h)\in M$ and all $g\in G$.  Furthermore, $\pi_2$ is a normal
morphism and there is a map $\psi\colon\gC\to M$
such that the diagram
\begin{equation}\label{diag ab}
\xymatrix{
& M\ar[dd]_{\pi_2}\ar[rrd]^n\\
\gC\ar@/_3ex/[rrr]_{\gvp}\ar[dd]_{\mu}\ar[ur]^{\psi}&&& G\ar[dd]^{\eta}\\
& M'\ar[rrd]^{n'}\\
\gC'\ar[rrr]^{\gvp'}\ar[ru]^{\psi'} &&& G'
}
\end{equation}
commutes.
\end{lemma}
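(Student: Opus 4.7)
The plan is to identify $M$ explicitly as the fiber product subgroup
$\{(m',h)\in M'\times G : (m')n' = h\eta\}$ of $M'\times G$, with $\pi_2$ the projection onto the first factor and $n$ the projection onto the second. Every subsequent verification then reduces to a coordinatewise computation inside $M'\times G$.

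The first step is to show that the proposed action $(m',h)^g := ((m')^{g\eta}, h^g)$ actually lands in $M$: one computes $((m')^{g\eta})n' = ((m')n')^{g\eta} = (h\eta)^{g\eta} = (h^g)\eta$, using (NM1) for $n'$ together with the fact that $\eta$ is a homomorphism. It is manifestly a $G$-action, because $\eta$ respects composition and both the $G'$-action on $M'$ and conjugation in $G$ are already actions. Property (NM1) for $n$ is then immediate, since $n$ simply extracts the second coordinate, which transforms by plain conjugation.

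The main (mild) obstacle, and the only place where the hypothesis that $n'$ is a normal map enters essentially, is verifying (NM2) for $n$. Given $a=(m',h)$ and $b=(m_0',h_0)$ in $M$, the proposed formula gives $a^{bn}=a^{h_0}=((m')^{h_0\eta}, h^{h_0})$, while coordinatewise conjugation in the ambient direct product gives $a^b=b^{-1}ab=((m')^{m_0'}, h^{h_0})$. The second coordinates agree automatically; for the first, the pullback relation forces $h_0\eta = (m_0')n'$, so (NM2) for $n'$ yields $(m')^{h_0\eta} = (m')^{(m_0')n'} = (m')^{m_0'}$, as required. That $\pi_2$ is a normal morphism is then routine: the square $\pi_2\, n' = n\, \eta$ commutes by the pullback condition, and equivariance is built directly into the definition of the action.

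Finally, to produce $\psi\colon\gC\to M$ filling in \eqref{diag ab}, I would appeal to the universal property of the pullback. The commutativity of \eqref{diag preab} yields $(\gc\mu\psi')n' = \gc\mu\gvp' = \gc\gvp\eta$ for every $\gc\in\gC$, so the assignment $\gc\mapsto (\gc\mu\psi',\,\gc\gvp)$ is a well-defined homomorphism $\psi\colon\gC\to M$ satisfying $\psi n = \gvp$ and $\psi\pi_2 = \mu\psi'$, which are exactly the identities needed to make \eqref{diag ab} commute.
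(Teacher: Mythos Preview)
Your proof is correct and follows essentially the same route as the paper: both work with the explicit fiber product $M=\{(m',h)\in M'\times G:(m')n'=h\eta\}$, verify (NM1) and (NM2) coordinatewise (the latter via the pullback relation $h_0\eta=(m_0')n'$ together with (NM2) for $n'$), and define $\psi(\gc)=(\gc\mu\psi',\gc\gvp)$. If anything, you are slightly more careful than the paper, which does not explicitly check that the proposed $G$-action preserves the subgroup $M\subseteq M'\times G$.
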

\begin{proof}
We have
\[
((m',h)^g)n=((m')^{g\eta},h^g)n=h^g=((m',h)n)^g,
\]
for all $(m', h)\in M$ and $g\in G,$ so (NM1) holds for $n$.  Also
\[
(m',h)^{(a',g)n}=(m',h)^g=((m')^{g\eta},h^g)=((m')^{(a')n'},h^g)=((m')^{a'},h^g)=(m',h)^{(a',g)},
\]
for all $(m',h), (a',g)\in M,$ so (NM2) holds for $n$ as well, 
and $n$ is a normal map with the given normal structure.
It is easy to check that $\pi_2$ is a normal morphism.

Let $\psi\colon\gC\to M$ be defined by
\[
\gc \psi=((\gc)\mu \psi', (\gc)\gvp).
\]
Then, by definition, $\gc \psi n=\gc\gvp,$ for all $\gc\in\gC,$  
so $\psi\circ n=\gvp$.  Also, by definition, $\psi\circ\pi_2=\mu\circ \psi'$.
\end{proof}

\section{The free normal closure of a map}\label{sect free nc}

\noindent
In this section $\gvp\colon\gC\to G$ is a fixed map.
We recall that   
the {\it free normal closure of $\gvp$} 
is a factorization $\gC\overset{c_{\gvp}}\lr\cl\overset{\W\gvp}\lr G$ of $\gvp,$
with $\W\gvp$ a normal map, as defined below.

\begin{Def}\label{def normal closure}
Let $\gvp\colon \gC\to G$ be a map.
A {free normal closure} of $\gvp$ is a factorization of the latter via
\[
\gC\overset {c_{\gvp}}\lr \cl\overset{\W\gvp}\to G
\] 
such that $\W\gvp\colon\cl\to G$ is a normal map, and such that for any other factorization
\[
\gvp=\psi\circ n
\]
via a
normal map $n\colon M\to G$ 
with $\psi\colon \gC\to M,$
there exists a unique normal morphism  $\W\psi\colon\cl\to M$ rendering
the diagram below commutative.
%
%
\begin{equation}\label{diag fnc}
\xymatrix{
\gC\ar[rr]^{\gvp}\ar[rd]^{c_{\gvp}}\ar@/_3ex/[rdd]_{\psi} && G\\
&\cl\ar[ru]^{\W\gvp}\ar[d]_{\exists!\W\psi}\\
&M\ar[ruu]_n\\
}
\end{equation}
\end{Def}
In \S2 below, and Appendix \ref{app A}, we will show that the free normal closure exists.  As for
uniqueness we have:

\begin{lemma}\label{uniqueness of nc}
The free normal closure of $\gvp\colon\gC\to G$ is unique
up to an isomorphism of normal maps over $G$.
\end{lemma}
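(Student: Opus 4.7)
The plan is to run the standard abstract-nonsense uniqueness argument for objects defined by a universal property: two such objects admit mutually inverse comparison morphisms, and these are forced to be inverses by the uniqueness clause built into the universal property itself.

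Concretely, suppose
\[
\gC\xrightarrow{c_1} C_1\xrightarrow{\bar\gvp_1} G\qquad\text{and}\qquad \gC\xrightarrow{c_2} C_2\xrightarrow{\bar\gvp_2} G
\]
are two free normal closures of $\gvp$. Applying the universal property of $(C_1,c_1,\bar\gvp_1)$ to the factorization $\gvp = c_2\circ \bar\gvp_2$ (which is a factorization through the normal map $\bar\gvp_2$) produces a unique normal morphism $\alpha\colon C_1\to C_2$ over $G$ with $c_1\circ\alpha = c_2$ and $\alpha\circ\bar\gvp_2 = \bar\gvp_1$. Symmetrically, the universal property of $(C_2,c_2,\bar\gvp_2)$ yields a unique normal morphism $\beta\colon C_2\to C_1$ over $G$ with $c_2\circ\beta = c_1$ and $\beta\circ\bar\gvp_1 = \bar\gvp_2$.

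Next I would show $\beta\circ\alpha = \mathrm{id}_{C_1}$ and $\alpha\circ\beta = \mathrm{id}_{C_2}$. For the first, note that both $\beta\circ\alpha\colon C_1\to C_1$ and $\mathrm{id}_{C_1}$ are normal morphisms over $G$ (composition of normal morphisms is again a normal morphism by a direct check from Definition \ref{def morphism of cm}, and the identity obviously is), and both fit into the commutative diagram \eqref{diag fnc} for the trivial factorization $\gC\xrightarrow{c_1}C_1\xrightarrow{\bar\gvp_1}G$ with $\psi = c_1$ and $n = \bar\gvp_1$. The uniqueness clause in Definition \ref{def normal closure} applied to this factorization forces $\beta\circ\alpha = \mathrm{id}_{C_1}$. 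The identity $\alpha\circ\beta = \mathrm{id}_{C_2}$ follows by the same argument with the roles of $C_1$ and $C_2$ swapped.

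The argument is wholly formal and presents no real obstacle; the only point that deserves verification is that normal morphisms over $G$ are closed under composition and that $\mathrm{id}_{C_i}$ counts as a normal morphism over $G$ in the sense of Definition \ref{def morphism of cm}, both of which are immediate from the definitions. Hence $\alpha$ is an isomorphism of normal maps over $G$ with inverse $\beta$, giving the claimed uniqueness.
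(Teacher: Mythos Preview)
Your argument is correct and is exactly the standard universal-property uniqueness argument the paper has in mind; the paper's own proof simply reads ``Straightforward from the universal properties,'' and you have spelled that out. One small notational wrinkle: since the paper composes maps left-to-right (so that $c_1\circ\alpha$ means ``first $c_1$, then $\alpha$''), the composite you want equal to $\mathrm{id}_{C_1}$ should be written $\alpha\circ\beta$ rather than $\beta\circ\alpha$; the mathematics is of course unaffected.
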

\begin{proof}
Straightforward from the universal properties.
\end{proof}

\begin{lemma}\label{lem gen of WgC}
Let $\gC\overset{c_{\gvp}}\lr\cl\overset{\W\gvp}\lr G$  be
the free normal closure of $\gvp$.  Then
\begin{enumerate}
\item 
the group $\cl$    
is generated by $\{(c_{\gvp}(\gC))^g\mid g\in G\}$;

\item
$\W\gvp(\cl)$ is the normal closure of the subgroup $\gvp(\gC)$ in $G$.
\end{enumerate}
\end{lemma}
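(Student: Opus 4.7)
\medskip

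\textbf{Plan.} Both statements follow from the universal property of the free normal closure together with the behaviour of normal maps under $G$--invariant subgroups (Lemma \ref{lem basic cm}(2),(3)) and the axiom (NM1).

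For (1), let $N$ be the subgroup of $\cl$ generated by $\{(c_{\gvp}(\gc))^g \mid \gc\in\gC,\ g\in G\}$. The plan is to show $N=\cl$ by exhibiting the inclusion $N\hookrightarrow\cl$ as a retract, using the universal property of $\cl$. Since $N$ is generated by a $G$-invariant set of generators, $N$ is a $G$-invariant subgroup of $\cl$, and hence by Lemma \ref{lem basic cm}(3) the restriction $\W\gvp\restr_N\colon N\to G$ is itself a normal map under the restricted $G$-action. Moreover $c_{\gvp}$ factors through $N$ by construction, giving a factorization $\gC\to N\to G$ of $\gvp$ with $N\to G$ a normal map. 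The universal property of $\cl$ then supplies a unique normal morphism $\W\psi\colon\cl\to N$ over $G$ that extends $c_\gvp$. Composing with the inclusion $N\hookrightarrow\cl$ yields a normal morphism $\cl\to\cl$ over $G$ compatible with $c_\gvp$; on the other hand $\mathrm{id}_{\cl}$ is such a morphism, so by uniqueness the composition equals the identity. Thus $N\hookrightarrow\cl$ is surjective, i.e.\ $N=\cl$.

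For (2), one direction is immediate: the commutativity $\gvp=c_\gvp\circ\W\gvp$ gives $\gvp(\gC)\subseteq\W\gvp(\cl)$, and by Lemma \ref{lem basic cm}(2) applied to $\W\gvp$ we have $\W\gvp(\cl)\nsg G$, so $\lan\gvp(\gC)^G\ran\subseteq\W\gvp(\cl)$. For the reverse inclusion, apply (1): every element of $\cl$ is a product of elements of the form $(c_\gvp(\gc))^g$, so using axiom (NM1) for $\W\gvp$,
\[
\W\gvp\bigl((c_\gvp(\gc))^g\bigr)=\bigl(\W\gvp(c_\gvp(\gc))\bigr)^g=\bigl(\gvp(\gc)\bigr)^g\in\lan\gvp(\gC)^G\ran,
\]
and hence $\W\gvp(\cl)\subseteq\lan\gvp(\gC)^G\ran$, establishing equality.

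\textbf{Main obstacle.} The only delicate point is the argument in (1) that the composite $\cl\xrightarrow{\W\psi} N\hookrightarrow\cl$ coincides with $\mathrm{id}_{\cl}$; this requires verifying that both maps are normal morphisms over $G$ (with $\eta=\mathrm{id}_G$) which extend $c_\gvp$, so that the uniqueness clause of Definition \ref{def normal closure} applies. Everything else is formal once one has Lemma \ref{lem basic cm} and (NM1) at hand.
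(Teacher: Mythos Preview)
Your proof is correct and follows essentially the same approach as the paper: for (1) you use the universal property with the $G$-invariant subgroup $N$ (the paper calls it $M$) and invoke uniqueness to identify the composite $\cl\to N\hookrightarrow\cl$ with the identity; for (2) you use (1) together with (NM1), exactly as the paper does, though you spell out both inclusions where the paper computes the generating set of $\W\gvp(\cl)$ directly.
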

\begin{proof}
Let $M\le\cl$ be the subgroup generated by $\{(c_{\gvp}(\gC))^g\mid g\in G\},$
and consider diagram \eqref{diag fnc}, where $n$ is the map $\W\gvp$ restricted to $M$,
and where $\psi$ is the map $c_{\gvp}$ (with range $M$ in place of $\cl$). 
By Lemma \ref{lem basic cm}(3), $m$ is a normal morphism.  Hence there exists
a (unique) normal morphism 
\begin{equation}\label{eq hatc}
\W\psi\colon\cl\to M,
\end{equation}
rendering the diagram commutative.

Now consider again diagram \eqref{diag fnc}, with $\cl, \W\gvp, c_{\gvp}$ in place of $M, m, \psi$ respectively.
Of course $\W{c_{\gvp}}$ is the identity map in this case.  However, the map $\W\psi$ of equation
\eqref{eq hatc} considered as a map from $\cl$ to $\cl$ also
renders diagram \eqref{diag fnc} commutative in this case.  By uniqueness,
$\W\psi$ is the identity map, so part (1) holds.

Next, by (1), $\W\gvp(\cl)$ is generated by
$\{\W\gvp(c_{\gvp}((\gC)^g))\mid g\in G\}$, since $\W\gvp$ is a group homomorphism.
But since $\W\gvp$ is a normal map, $\W\gvp(c_{\gvp}(\gC)^g)=(\W\gvp(c_{\gvp}(\gC))^g=\gvp(\gC)^g,$
for all $g\in G$.  Hence (2) holds.
\end{proof}

\setcounter{subsection}{3}
\subsection{A construction of the free normal closure}\label{sect ex nc}\hfill
\medskip

\noindent

The purpose of this subsection is to recall the construction
of the free  normal closure of a map $\gvp\colon\gC\to G$.
The detailed proofs are given in Appendix \ref{app A}.
 
\setcounter{prop}{4}
\begin{thm}\label{thm normal closure}
Let $\gvp\colon\gC\to G$ be a map of groups.  Then the free normal
closure of $\gvp$ exists.
\end{thm}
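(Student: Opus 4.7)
The plan is to build $\cl$ explicitly by generators and relations, in the spirit of the classical Peiffer-type construction of a free crossed module. By Lemma \ref{lem gen of WgC}(1), any free normal closure must be generated as a group by the conjugates $c_{\gvp}(\gc)^g$ for $\gc\in\gC$ and $g\in G$, so I would take as generators the formal symbols $\{[\gc,g]:\gc\in\gC,\ g\in G\}$, form the free group $F$ on this set, and put $\cl:=F/R$, where $R$ is the normal closure of the relations needed to force the three structure pieces (the map $c_{\gvp}$, the projection $\W\gvp$, and the $G$-action) to be compatible with (NM1) and (NM2).

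Concretely, I would impose the relators
$[\gc_1\gc_2,g]\cdot[\gc_2,g]^{-1}[\gc_1,g]^{-1}$
so that $\gc\mapsto[\gc,1]$ becomes a homomorphism $c_{\gvp}\colon\gC\to\cl$, together with the Peiffer relators
$[\gc,g]^{-1}[\gd,h][\gc,g]\cdot[\gd,\,h g^{-1}\gvp(\gc)g]^{-1}$,
which encode (NM2). The projection $\W\gvp\colon\cl\to G$ would be defined on generators by $[\gc,g]\mapsto g^{-1}\gvp(\gc)g$ and the $G$-action by $[\gc,g]^h:=[\gc,gh]$. One then checks that both families of defining relators lie in the kernel of the induced map $F\to G$ and that the $G$-action permutes them modulo $R$, so $\W\gvp$ and the action descend to $\cl$. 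Axiom (NM1) holds on generators by construction and extends to all of $\cl$ since both sides are homomorphic in their $\cl$-argument; (NM2) holds on generators by the Peiffer relator and extends for the same reason.

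For the universal property, given any factorization $\gvp=\psi\circ n$ through a normal map $n\colon M\to G$, I would define $\W\psi\colon\cl\to M$ on the generating set by $[\gc,g]\mapsto\psi(\gc)^g$, where the $G$-action on $M$ is the given normal structure of $n$. Both families of relators are killed: the multiplicativity relators die because $\psi$ is a homomorphism and $(-)^g$ is an automorphism of $M$, while the Peiffer relators die precisely because $n$ satisfies (NM2) together with $\psi\circ n=\gvp$. Commutativity of diagram \eqref{diag fnc} then follows from the definition of $\W\psi$ on $[\gc,1]$, and uniqueness of $\W\psi$ is automatic since the $[\gc,g]$ generate $\cl$ and their images are forced by commutativity together with the normal-morphism condition.

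The main obstacle is the well-definedness step for the $G$-action: one must verify that for each $h\in G$ the map $[\gc,g]\mapsto[\gc,gh]$ on $F$ sends every defining relator to a consequence of the defining relators, so that the action descends to $\cl$. For the multiplicativity relators this is immediate, but for the Peiffer relators it is a somewhat delicate bookkeeping computation: applying $h$ to $[\gc,g]^{-1}[\gd,k][\gc,g]\cdot[\gd,kg^{-1}\gvp(\gc)g]^{-1}$ one must rewrite the resulting word using the relation $[\cdot,g']^h=[\cdot,g'h]$ to exhibit it as another Peiffer relator (with $g$ replaced by $gh$ and $k$ by $kh$). It is precisely this verification — together with the dual check that $\W\gvp$ annihilates $R$ — that the paper defers to Appendix \ref{app A}.
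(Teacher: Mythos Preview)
Your construction is correct and is essentially identical to the paper's: your generators $[\gc,g]$ are the paper's $\gc_g$, your multiplicativity relators are the paper's $R_{nm1}$ (with $R_1$ following from them), your Peiffer relators are exactly $R_{nm2}$ with the roles of $(\gc,g)$ and $(\gd,h)$ interchanged, and your definitions of $c_{\gvp}$, $\W\gvp$, the $G$-action, and $\W\psi$ all coincide with those in \S\ref{sect ex nc} and Appendix~\ref{app A}. The verification you flag as ``somewhat delicate'' is in fact the routine computation carried out in Lemma~\ref{lem the aut h}: acting by $h$ sends the Peiffer relator for $(g,k)$ to the Peiffer relator for $(gh,kh)$ on the nose.
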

We start by considering the free group $F$ generated by the
following set of distinct symbols:
\[
\gC_G:=\{\gc_g\mid \gc\in\gC,\ g\in G\}.
\]
We consider the following relations on $F$
%
%
\begin{equation}\label{eq 1g=1h}\tag{$R_1$}
R_1:=\{1_g=1_h\mid  g,h\in G\}.
\end{equation}
%
%
\begin{equation}\label{eq Rnm1}\tag{$R_{nm1}$}
R_{nm1}:=\{\gc_g\gd_g=(\gc\gd)_g\mid \gc,\gd\in\gC\text{ and }g\in G\}.
\end{equation}
%
%
\begin{equation}\label{eq Rnm2}\tag{$R_{nm2}$}
R_{nm2}:=\{\gd_h^{-1}\gc_g\gd_h=\gc_{g\gvp(\gd)^h}\mid\gc,\gd\in\gC\text{ and }h,g\in G\}.
\end{equation}
%
The relation $R_1$ just mean that the identity $1=1_g,$ where $g\in G$ is
an arbitrary element, is the identity of $F$, and $F$ is the free group
on the set 
\[
(\gC\sminus\{1\})_G:=\{\gc_g\mid\gc\in\gC\sminus\{1\}\text{ and } g\in G\}.  
\]
We let $\cl$ be the group defined using the relations $R_1, R_{nm1}$ and $R_{nm2}$
above:
%
\begin{equation}\label{eq hatgC}
\cl:={\rm Gr}\{\gC_G\mid R_1,\ R_{nm1},\ R_{nm2}\}
\end{equation}
%
\begin{notation}\label{not widebarF}
\begin{enumerate}
\item
We denote 
\[
\widehat F:={\rm Gr}\{\gC_G\mid 1_h=1_g,\ \gc_g\gd_g=(\gc\gd)_g\mid \gc,\gd\in\gC\text{ and }g,h \in G\}.
\]
and we let
$\widehat\gC_g \text{ be the image of the set }\{\gc_g\mid\gc\in\gC\}\subseteq F\text{ in }\widehat F.$
Thus $\widehat{\gC}_g=\{\widehat{\gc}_g\mid g\in G\},$ and  
$\widehat{\gC}_g\cong \gC,$ for all $g\in G$.  Further, $\widehat F$ is a free product 
\[
\widehat F=\star_{g\in G}\widehat\gC_g.
\]

\item
Let $g\in G$.  We denote by $\W\gc_g$ 
the image in $\cl$ of $\gc_g\in F$.  We let 
\begin{equation}\label{not widehatgC}
\W\gC_g:=\{\W\gc_g\mid \gc\in\gC\text{ and }g\in G\}\le\cl.
\end{equation}
\end{enumerate}
\end{notation}
\noindent
We define
\begin{equation}\label{eq hatgvp}
\W\gvp\colon \cl\to G:\quad \W\gc_g\mapsto \gvp(\gc)^g,\quad\gc\in\gC,\ g\in G.
\end{equation}
Lemma \ref{lem hatgvp respects R} shows that $\W\gvp$ is a well defined map.

Next we define an action $\W\ell\colon G\to \Aut(\cl)$ by
\begin{equation}\label{eq action on cl}
\W\ell(h)\in\Aut(\cl):\quad (\W\gc_g)^h\mapsto\W\gc_{gh},\qquad\text{for all $\gc\in\gC$ and $g, h\in G.$}
\end{equation}

\noindent
Lemma \ref{lem the aut h} shows that $\W\ell$ defines an action of $G$ on $\cl,$ and
Lemma \ref{lem Wgvp is a nm} shows that $\W\gvp\colon\cl\to G$ is a normal map having
$\W\ell$ as its normal structure.  Finally, in subsection \ref{sub univ of nc}
we show the universality property of $\cl$.  

\setcounter{subsection}{6}
\subsection{Examples of finiteness and cellularity}\label{sub some properties}\hfill
\medskip

\noindent
In this subsection we prove some basic properties of the normal closure 
and give some examples.

\setcounter{prop}{7}
\begin{lemma}\label{lem widehatgC}
Let $h,g\in G,$ and $\gd\in\gC$. Then
\begin{enumerate}
\item
$\W\gC_g^{\W\gd_h}=\W\gC_{g\gvp(\gd)^h};$
\smallskip

\item
$\W\gC_g=\W\gC_{\gvp(\gd) g};$
\smallskip

\item
$\ker\W\gvp\le Z(\cl)$;

\item
if $G$ is finite, then
$\cl=\Pi_{g\in G}\W\gC_g$.
\end{enumerate}
\end{lemma}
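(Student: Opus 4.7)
The plan is as follows. Parts (1), (2), and (3) follow quickly from the defining relations or from earlier results, and I would dispense with them first. For (1), I would apply $R_{nm2}$ directly: in $\cl$ the relation reads $(\W\gc_g)^{\W\gd_h} = \W\gc_{g\gvp(\gd)^h}$, and letting $\gc$ range over $\gC$ yields $\W\gC_g^{\W\gd_h} = \W\gC_{g\gvp(\gd)^h}$. For (2), specialize (1) to $h=g$: this gives $\W\gC_g^{\W\gd_g} = \W\gC_{g\gvp(\gd)^g} = \W\gC_{\gvp(\gd)g}$. On the other hand, by $R_{nm1}$ the set $\W\gC_g$ is a subgroup (the image of $\widehat{\gC}_g \cong \gC$) and $\W\gd_g \in \W\gC_g$, so conjugation by $\W\gd_g$ preserves $\W\gC_g$; combining the two yields $\W\gC_g = \W\gC_{\gvp(\gd)g}$. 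For (3), since $\W\gvp$ is a normal map by Lemma \ref{lem Wgvp is a nm}, Lemma \ref{lem basic cm}(1) immediately gives $\ker\W\gvp \le Z(\cl)$; equivalently, for $x \in \ker\W\gvp$ and $a \in \cl$, axiom (NM2) forces $a^x = a^{x\W\gvp} = a^1 = a$.

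For (4), I would enumerate $G = \{g_1, \ldots, g_n\}$, set $P := \W\gC_{g_1}\W\gC_{g_2}\cdots\W\gC_{g_n}$, and aim to show $\cl = P$. Since $1 \in P$ and $\cl$ is generated by $\bigcup_k \W\gC_{g_k}$ by Lemma \ref{lem gen of WgC}(1), it suffices to verify that $P$ is closed under right multiplication by every generator $\W\gc_{g_k}$. The essential tool is the commutation form of $R_{nm2}$, namely $\W e_h\W\gc_{g_k} = \W\gc_{g_k}\W e_{h\gvp(\gc)^{g_k}}$. Starting from a product $a_1 \cdots a_n \in P$ with $a_j \in \W\gC_{g_j}$, I would push $\W\gc_{g_k}$ leftward past $a_n, a_{n-1}, \ldots, a_{k+1}$ using this commutation, and then merge it with $a_k$ via $R_{nm1}$.

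The hard part is that each push shifts the subscript of the affected factor by right-multiplication by $u := \gvp(\gc)^{g_k} \in G$, so after the pushes the factors lie in $\W\gC_{g_1}, \ldots, \W\gC_{g_k}, \W\gC_{g_{k+1}u}, \ldots, \W\gC_{g_n u}$; the last $n-k$ subscripts need not coincide with $g_{k+1}, \ldots, g_n$. To correct this I would use (1) to relocate each misplaced factor by further conjugation and (2) to identify $\W\gC_g$ with $\W\gC_{\gvp(\gd)g}$ when subscripts differ by a left $\gvp(\gC)$-factor. Termination of the resulting sorting process is guaranteed by the finiteness of $G$, which bounds the number of possible subscript configurations for the $n$-fold product, much as in a bubble-sort argument.
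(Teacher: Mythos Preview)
Your treatment of (1)--(3) is correct and coincides with the paper's argument.

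For (4) there is a genuine gap in the termination step. After you push $\W\gc_{g_k}$ leftward into slot $k$, the tail factors sit in $\W\gC_{g_{k+1}u},\ldots,\W\gC_{g_n u}$ with $u=\gvp(\gc)^{g_k}$, and you propose to ``re-sort'' them using (1) and (2). But every further application of (1) again shifts a subscript by right multiplication by some $\gvp(\gd)^h$, and (2) only identifies $\W\gC_g$ with $\W\gC_{\gvp(\gd)g}$, i.e.\ absorbs a \emph{left} $\gvp(\gC)$-factor; since $g_j u\in\gvp(\gC)g_j$ would require $u\in\gvp(\gC)^{g_j}$, which you have no control over, (2) does not repair the disorder. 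Your appeal to finiteness of $G$ (``bounded number of subscript configurations'') does not establish termination: a process on a finite state space can cycle. The bubble-sort analogy breaks down precisely because here a swap alters one of the two subscripts and may \emph{create} new inversions rather than strictly decreasing their number, so there is no evident decreasing measure.

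The paper avoids this by organizing the proof as an induction on the minimal free-product length $|\W w|$ of a preimage in $\widehat F=\star_{g\in G}\widehat\gC_g$. For $|\W w|=r$, among all length-$r$ representatives one picks one whose first subscript index $i_1$ is minimal; by the inductive hypothesis applied to the tail one may take $i_2<\cdots<i_r$. If $i_1=i_2$ the first two letters merge via $R_{nm1}$, contradicting minimality of $r$; if $i_1>i_2$ a single application of $R_{nm2}$ brings the second letter to the front \emph{without changing its subscript} $g_{i_2}$ (only the letter that was first has its subscript shifted), contradicting minimality of $i_1$. Hence $i_1<i_2$ and the word is sorted. The pair (length, first index) supplies the well-founded measure that your closure-under-multiplication approach is missing; if you want to salvage your strategy, you would need to exhibit an analogous quantity that strictly decreases at each re-sorting step.
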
 
\begin{proof}
Recall the notation $\W\gC_g$ from equation \eqref{not widehatgC}.
Since $\cl$ satisfies the relations $R_{nm2}$ (see equation \eqref{eq Rnm2}), 
we see that (1) holds.  Also, by the relations $R_{nm2},$
\[
\W\gd_g^{-1}\W\gc_g\W\gd_g=\W\gc_{g\gvp(\gd)^g}=\W\gc_{\gvp(\gd)g}.
\]
It follows that
\[
\W\gC_g=\W\gC_g^{\W\gd_g}=\W\gC_{\gvp(\gd)g}.
\]
This shows (2), and (3) follows from Lemma \ref{lem basic cm}(1), since
$\W\gvp$ is a normal map.

For the proof of part (4) set $G:=\{g_1,\dots, g_s\},$ where $s=|G|$.
Recall that $\cl$ is the image of the free product
$\widehat F=\widehat\gC_{g_1}*\widehat\gC_{g_2}*\cdots  *\widehat\gC_{g_s},$
where $\widehat\gC_{g_i}\cong\gC,$ for all $i$ (see Notation \ref{not widebarF}).
Of course $\widehat F$ is equipped with a natural free product
word length.  For $\W w\in\cl$, we let $|\W w|$ be the minimal
length of a word in $\widehat w\in\widehat F$ such that $\W w$ is the
image of $\widehat w$.  We now show by induction on $|\W w|,$ that 
$\W w\in \W\gC_{g_1}\W\gC_{g_2}\cdots\W\gC_{g_s}$.

If $|\W w|=1,$ this is obvious.
Our induction hypothesis is that if $|\W w| < r,$ then $r\le s+1,$ and we can
write $\W w\in \W\gC_{g_1}\W\gC_{g_2}\cdots\W\gC_{g_s}$ using $|\W w|$ 
non-identity elements. 

Assume $|\W w|=r$.   
From all the words $\widehat w\in\widehat F$ of length $r$ whose image
is $\W w,$ choose a word 
\[
\widehat w=(\widehat\gc_1)_{g_{i_1}}(\widehat\gc_2)_{g_{i_2}}\cdots(\widehat\gc_r)_{g_{i_r}}
\]  
so that $i_1$ is as small as possible.
By induction, we may assume
that $i_2< i_3 <\cdots < i_r$.  Now if $i_2=i_1,$ then
$\widehat\gc_{g_{i_1}}\widehat\gc_{g_{i_2}}\in\widehat\gC_{g_{i_1}},$ so
we get that $|\W w| < r,$ a contradiction.  If $i_2< i_1,$
then, using the relations $R_{nm2},$ we can write
$\W w$ as a word of length $r$ starting with $(\W\gc_{i_2})_{g_{i_2}},$
contradicting the minimality of $i_1$.  Hence $i_1<i_2,$ and (4) holds.      
\end{proof}

Part (2) of the following corollary should be compared with \cite[Theorem 5.7.1, p.124]{BHS}.

\begin{cor}\label{cor eg}
Let $\gvp\colon\gC\to G$ be a map of groups and set $K:=\ker\gvp,$ then
\begin{enumerate}
\item
if $\gvp$ is surjective, the $\cl=\W\gC_1,$ and
$\cl\cong \gC/[\gC,K]$ as crossed modules over $G$;   

\item
If $\gC$ and $G$ are finite, then $\cl$ is finite.  
\end{enumerate}
\end{cor}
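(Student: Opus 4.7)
The plan is to prove (1) by identifying $\cl$ directly with $\W\gC_1$ via Lemma \ref{lem widehatgC}, and then identifying $\W\gC_1$ with $\gC/[\gC,K]$ through mutually inverse maps: one explicit on generators, the other produced by the universal property of the free normal closure. Part (2) will be a short corollary of the product decomposition in Lemma \ref{lem widehatgC}(4) together with the observation that each factor $\W\gC_g$ is a homomorphic image of $\gC$.

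For (1), since $\gvp$ is surjective every $g \in G$ equals $\gvp(\gd)$ for some $\gd \in \gC$, so Lemma \ref{lem widehatgC}(2) yields $\W\gC_g = \W\gC_{\gvp(\gd)\cdot 1} = \W\gC_1$. Combined with Lemma \ref{lem gen of WgC}(1), which generates $\cl$ by the $G$-translates of $c_{\gvp}(\gC) = \W\gC_1$, i.e.\ by $\bigcup_{g \in G} \W\gC_g$, we conclude $\cl = \W\gC_1$. The map $\gc \mapsto \W\gc_1$ is a surjective homomorphism $\gC \to \cl$ (it is a homomorphism by $R_{nm1}$), and the relation $R_{nm2}$ specialized to $g = h = 1$ reads $\W\gd_1^{-1} \W\gc_1 \W\gd_1 = \W\gc_{\gvp(\gd)}$, which collapses to $\W\gc_1$ whenever $\gd \in K$. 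Hence the homomorphism kills $[\gC, K]$ and induces a surjection $\bar{\alpha}\colon \gC/[\gC, K] \twoheadrightarrow \cl$. For the inverse, give $\gC/[\gC, K]$ its canonical normal structure over $G$: the map $[\gc] \mapsto \gvp(\gc)$ is surjective with central kernel $K/[\gC,K]$, so Remark \ref{rem surj} supplies a unique normal structure, making $\gC \to \gC/[\gC, K] \to G$ a factorization of $\gvp$ through a normal map. The universal property of Definition \ref{def normal closure} then produces a normal morphism $\cl \to \gC/[\gC, K]$ over $G$ sending $\W\gc_1 \mapsto [\gc]$, i.e.\ the inverse of $\bar{\alpha}$ on generators.

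For (2), I would apply Lemma \ref{lem widehatgC}(4) (valid because $G$ is finite): $\cl = \prod_{g \in G} \W\gC_g$ as a set-theoretic product indexed by the finite set $G$. Each $\W\gC_g$ is generated by $\{\W\gc_g \mid \gc \in \gC\}$, and the relation $R_{nm1}$ again shows that $\gc \mapsto \W\gc_g$ is a group homomorphism from $\gC$ onto $\W\gC_g$; hence $|\W\gC_g| \le |\gC|$ and $|\cl| \le |\gC|^{|G|} < \infty$.

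The main obstacle is the action compatibility in (1): one must verify that $\bar{\alpha}$ is not merely a group isomorphism but an isomorphism of crossed modules over $G$, which requires the natural $G$-action on $\gC/[\gC, K]$ (defined by $[\gc]^g := [\gd^{-1} \gc \gd]$ for any $\gd$ with $\gvp(\gd) = g$) to be well-defined and to match the action $(\W\gc_1)^g = \W\gc_g$ on $\cl$. Both sides reduce to the identity $\W\gd_1^{-1} \W\gc_1 \W\gd_1 = \W\gc_{\gvp(\gd)}$ from $R_{nm2}$, so once one checks that quotienting by $[\gC, K]$ renders the conjugation action independent of the choice of lift $\gd$, the compatibility becomes mechanical.
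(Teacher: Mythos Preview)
Your proposal is correct and follows essentially the same approach as the paper: both use Lemma~\ref{lem widehatgC}(2) to get $\cl=\W\gC_1$, invoke Remark~\ref{rem surj} to equip $\gC/[\gC,K]$ with its normal structure, apply the universal property to produce the comparison map, and deduce (2) from Lemma~\ref{lem widehatgC}(4). The only cosmetic difference is that the paper computes $\ker c_{\gvp}=[\gC,K]$ directly (one inclusion from the universal map, the other from $c_{\gvp}(K)\le Z(\cl)$), whereas you phrase the same two facts as a pair of mutually inverse maps; your explicit use of the relation $R_{nm2}$ with $g=h=1$ is exactly the paper's observation that $c_{\gvp}(K)$ is central.
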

\begin{proof}
(1)\quad  
Since $\gvp$ is surjective $\cl=\W\gC_1$ by Lemma \ref{lem widehatgC}(2).

Since $\gC/[\gC,K]$ is a central extension of $G,$ Remark \ref{rem surj}
shows that the map $n\colon \gC/[\gC,K]\to G$ induced by $\gvp$ is a normal map.
Further the canonical map
$\psi\colon\gC\to \gC/[\gC,K]$ satisfies $\psi(\gc)=1$ iff
$\gc\in[\gC,K]$ and $\psi\circ n=\gvp$. 

Let $\W\psi\colon\cl\to\gC/[\gC,K]$ be the unique map of diagram \eqref{diag fnc}. 
The map $c_{\gvp}\colon\gC\to\cl$ is surjective
and satisfies $c_{\gvp}\circ\W\psi=\psi$.  Thus $\W\psi(\W\gc_1)=\psi(\gc)$.
It follows that if $1=c_{\gvp}(\gc)=\W\gc_1,$ then $\psi(\gc)=1,$ so $\gc\in [\gC,K]$.
Thus $\ker c_{\gvp}\le [\gC, K]$.
On the other hand,
$c_{\gvp}(K)\le\ker\W\gvp,$
so, since $\W\gvp$ is a normal map, $c_{\gvp}(K)\le Z(\cl)$.
It follows that $[\gC,K]\le\ker c_{\gvp}$.  Thus $\ker c_{\gvp}=[\gC,K]$.
Since $\W\psi(\W\gc_1)=\psi(\gc),$ we see that $\W\psi$
is a normal isomorphism from $\cl$ to $\gC/[\gC,K]$.  
\smallskip

\noindent
(2)\quad  
This follows immediately from Lemma \ref{lem widehatgC}(4).
\end{proof}

Our next proposition shows that $\cl$ is $\gC$-cellular (see \cite{CDFS} for the notion of cellularity).

\begin{prop}\label{prop cellular}
$\cl$ is $\gC$-cellular.
\end{prop}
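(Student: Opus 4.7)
The plan is to exhibit $\cl$ as the colimit of a diagram of $\gC$-cellular groups, and to invoke the fact (directly from \cite{CDFS}) that the class of $\gC$-cellular groups is the smallest class in the category of groups containing $\gC$ and closed under small colimits.

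First I would observe that the intermediate group $\widehat F$ from Notation \ref{not widebarF} is the coproduct, in the category of groups, of the family $\{\widehat\gC_g\}_{g\in G}$ of copies of $\gC$. Being a coproduct (i.e.\ a colimit) of $\gC$-cellular groups, $\widehat F$ is itself $\gC$-cellular. Since $\cl$ is obtained from $\widehat F$ by imposing the relations $R_{nm2}$, it suffices to realize this quotient as a further colimit of $\gC$-cellular groups.

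For each triple $(g,h,\gd)\in G\times G\times \gC$, I would introduce a parallel pair of group homomorphisms $\alpha_{g,h,\gd},\beta_{g,h,\gd}\colon\gC\to\widehat F$ defined by
\[
\alpha_{g,h,\gd}(\gc)=\widehat\gd_h^{-1}\widehat\gc_g\widehat\gd_h,\qquad \beta_{g,h,\gd}(\gc)=\widehat\gc_{g\gvp(\gd)^h}.
\]
Both maps are genuine homomorphisms, as one checks using $R_{nm1}$ in $\widehat F$ (which yields $\widehat{\gc\gc'}_k=\widehat\gc_k\widehat{\gc'}_k$ for every $k\in G$, and, by conjugation, makes $\alpha_{g,h,\gd}$ multiplicative). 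The relations in $R_{nm2}$ associated with the fixed triple $(g,h,\gd)$ are then exactly the equations $\alpha_{g,h,\gd}(\gc)=\beta_{g,h,\gd}(\gc)$ as $\gc$ varies through $\gC$. Consequently, the simultaneous coequalizer, in the category of groups, of the family $\{(\alpha_{g,h,\gd},\beta_{g,h,\gd})\}_{(g,h,\gd)}$ is exactly the quotient of $\widehat F$ by the normal closure of $R_{nm2}$, namely $\cl$.

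This simultaneous coequalizer is a colimit of a small diagram every object of which is $\gC$-cellular (each source object is a copy of $\gC$, and the target $\widehat F$ is $\gC$-cellular by the previous step). Closure of the $\gC$-cellular class under colimits then yields that $\cl$ is $\gC$-cellular. There is no significant obstacle here: the only verifications required are the routine check that $\alpha_{g,h,\gd}$ is a homomorphism (the place where $R_{nm1}$ is used in an essential way), and the standard fact that a group coequalizer of $\alpha,\beta\colon X\to Y$ is $Y$ modulo the normal closure of $\{\alpha(x)\beta(x)^{-1}\colon x\in X\}$.
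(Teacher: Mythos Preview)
Your proposal is correct and is essentially the paper's own argument: the paper packages your family of parallel pairs $\{(\alpha_{g,h,\gd},\beta_{g,h,\gd})\}$ into a single coequalizer $\star_{i\in I}\gC_i\rightrightarrows\star_{g\in G}\widehat\gC_g$ with $I=\gC\times G\times G$, where the two maps $e_1,e_2$ are precisely your $\beta$'s and $\alpha$'s assembled over the coproduct. The only cosmetic difference is that you first observe $\widehat F$ is $\gC$-cellular and then take the simultaneous coequalizer, whereas the paper writes the whole thing as one coequalizer diagram of free products of copies of $\gC$.
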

\begin{proof}
We show that $\cl$ is the  {\it coequalizer} of two maps between two free products
of copies of $\gC$:
\[
\xymatrix{\underset{i\in I}\star\gC_i\ar@<1ex>[rr]^{e_1}\ar@<-.5ex>[rr]_{e_2}&& \underset{g\in G}\star\widehat\gC_g,\\
}
\]
where $\widehat\gC_g\cong\gC$ is as in notation \ref{not widebarF}, for $g\in G$.
Also, $I=\gC\times G\times G,$ and $\gC_i\cong\gC,$ for $i\in I$. 
We now define  the maps $e_1$ and $e_2$.  Let $i=(\gd; g, h)\in I$.  For $\gc\in\gC_i,$ let
$e_1(\gc)=\widehat{\gc}_{g\gvp(\gd)^h},$ this defines the homomorphism $e_1$.
To define $e_2,$ let $i=(\gd; g, h)\in I$ and for $\gc\in\gC_i,$ define
$e_2(\gc)=(\widehat{\gd}_h)^{^{-1}}\widehat{\gc}_g\widehat{\gd}_h$. 
This defines the homomorphism $e_2$.  By the construction of $\cl,$
and by the definition of the coequilzer, $\cl$ is the coequilizer
of these maps, hence $\cl$ is $\gC$-cellular.
\end{proof}

%
\setcounter{subsection}{10}
\subsection{On abelian quotients of the normal closure}\hfill
\medskip

\noindent
The following results show that the  normal
closures tower behaves well with respect to abelianization.
We denote $H_{ab}=H/[H,H]$ the abelianization of $H,$ for any group $H$.
%
\begin{prop}\label{prop inj abelianizations} 
Let $\{\gC_i\mid i=1, 2, 3,\dots\}$
be the normal closures tower of $\gvp$ (see diagram \eqref{eq tower of fnc}).  Let  $\gC_{\infty}=\invlim\gC_i,$
and let $\gvp_{\infty}\colon\gC\to\gC_{\infty}$ be the map obtained by the universal property of $\gC_{\infty},$ then
\begin{enumerate}
\item
the map $(c_{\gvp})_{ab}\colon \gC_{ab}\to\cl_{ab}$ induced by $c_{\gvp}$
is injective; hence,
\item
the map $(\gvp_{\infty})_{ab}\colon\gC_{ab}\to (\gC_{\infty})_{ab}$ induced
on abelianization is injective.
\end{enumerate} 
\end{prop}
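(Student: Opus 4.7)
The plan is to reduce part $(1)$ to exhibiting a single factorization $\gC \xrightarrow{\psi} M \xrightarrow{n} G$ of $\gvp$ through a normal map $n$ such that the induced homomorphism $\psi_{ab}\colon\gC_{ab}\to M_{ab}$ is injective. Once such a factorization is in hand, the universal property of the free normal closure (Definition~\ref{def normal closure}) produces a normal morphism $\W\psi\colon\cl\to M$ with $c_\gvp\circ\W\psi=\psi$. Abelianizing gives
\[
(c_\gvp)_{ab}\circ(\W\psi)_{ab}=\psi_{ab},
\]
and since the composite on the right is injective, so is its first factor $(c_\gvp)_{ab}$.

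For the explicit construction of the test object, set $N:=\lan\gvp(\gC)^G\ran\nsg G$ and view the inclusion $N\hookrightarrow G$ as a normal map with the conjugation action of $G$. Applying Lemma~\ref{lem basic cm}(4) to this normal map together with the abelian group $V:=\gC_{ab}$, one obtains the normal map
\[
n\colon M:=N\times\gC_{ab}\lr G,\qquad (a,v)\longmapsto a,
\]
with action $(a,v)^g=(a^g,v)$. Define
\[
\psi\colon\gC\lr M,\qquad \gc\longmapsto(\gvp(\gc),\bar\gc),
\]
where $\bar\gc$ denotes the image of $\gc$ in $\gC_{ab}$. Since $M$ is a direct product, $\psi$ is a group homomorphism and visibly satisfies $\psi\circ n=\gvp$. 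Because $M_{ab}=N_{ab}\times\gC_{ab}$ and projection onto the second factor sends $\psi_{ab}(\bar\gc)$ to $\bar\gc$, the map $\psi_{ab}$ is injective; this completes $(1)$.

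For part $(2)$, the inverse system defining $\gC_{\infty}$ comes equipped with a canonical projection $\pi\colon\gC_{\infty}\to\gC_2=\cl$, and by the very construction of the tower one has $\gvp_{\infty}\circ\pi=\gvp_2=c_\gvp$. Abelianizing gives $(\gvp_{\infty})_{ab}\circ\pi_{ab}=(c_\gvp)_{ab}$, and since the right-hand side is injective by $(1)$, the first factor $(\gvp_{\infty})_{ab}$ must be injective as well. The only non-routine step is the choice of the auxiliary direct factor $V=\gC_{ab}$ in Lemma~\ref{lem basic cm}(4): this gadget is designed precisely to record, inside a normal map over $G$, the abelianization data one wishes to preserve. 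Once this device is in place, the universal property transports the injectivity from $\psi_{ab}$ into $(c_\gvp)_{ab}$, and then a formal inverse-limit argument delivers $(2)$.
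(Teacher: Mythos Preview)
Your proof is correct and takes a genuinely different, more elementary route than the paper's.

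Both arguments share the same overarching strategy for part~(1): exhibit a factorization $\gC\xrightarrow{\psi}M\xrightarrow{n}G$ of $\gvp$ with $n$ a normal map and $\psi_{ab}$ injective, then invoke the universal property of $\cl$ to factor $\psi$ through $c_\gvp$ and conclude that $(c_\gvp)_{ab}$ is injective. The difference lies entirely in the choice of test object $M$. The paper first passes to abelianizations $\gvp_{ab}\colon\gC_{ab}\to G_{ab}$, forms the free normal closure $\gC_{ab}^{\gvp_{ab}}$ over $G_{ab}$, appeals to the explicit description in Example~\ref{eg inj abelian} to see that $c_{\gvp_{ab}}$ is injective, and then pulls this back along $G\to G_{ab}$ via Lemma~\ref{lem ab} to obtain a normal map over $G$. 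Your construction is far more direct: you take $M=N\times\gC_{ab}$ with $N=\lan\gvp(\gC)^G\ran$ and invoke only Lemma~\ref{lem basic cm}(4), bypassing both the pullback lemma and the abelian computation of Example~\ref{eg inj abelian}. The injectivity of $\psi_{ab}$ is then immediate from the second projection. Your argument is shorter and self-contained; the paper's approach, on the other hand, exhibits $(c_\gvp)_{ab}$ as factoring through the canonical object $\gC_{ab}^{\gvp_{ab}}$, which may be conceptually useful elsewhere. For part~(2) the two proofs are identical.
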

\begin{proof}
Consider diagram \eqref{diag preab} with $\gC'=\gC_{ab}, G'=G_{ab},$
$\mu$ and $\eta$ are the natural maps and $\gvp'=\gvp_{ab}$
is the map induced by $\gvp$.  Further let $M'=\gC_{ab}^{^{\gvp_{ab}}},$
let $\psi'=c_{\gvp_{ab}}$ and $n'=\W{\gvp_{ab}}$.

By Lemma \ref{lem ab}, the pullback $M$  renders diagram 
\eqref{diag ab} commutative.  Thus by the universal property
of $\cl,$ there exists a normal morphism $\W{\psi}\colon\cl\to M$
rendering diagram \eqref{diag fnc} commutative.  We thus get a commutative diagram

\begin{equation}\label{diag abab}
\xymatrix{
& \cl\ar[dd]_{\gr}\ar[rrd]^{\W{\gvp}}\\
\gC\ar@/_2ex/[rrr]_{\gvp}\ar[dd]_{\mu}\ar[ur]^{c_{\gvp}}&&& G\ar[dd]^{\eta}\\
& \gC_{ab}^{^{\gvp_{ab}}}\ar[rrd]^{\W{\gvp_{ab}}}\\
\gC_{ab}\ar[rrr]^{\gvp_{ab}}\ar[ru]^{c_{\gvp_{ab}}} &&& G_{ab}
}
\end{equation}
By Example \ref{eg inj abelian}, $\gC_{ab}^{^{\gvp_{ab}}}$ is abelian
and $c_{\gvp_{ab}}$ is injective.

Hence we have a commutative diagram (note that $\mu$ is surjective)
\[
\xymatrix{\gC\ar[rr]^{c_{\gvp}}\ar[dd]_{\mu}&& \cl\ar[dd]^{\gr}\ar[dl]_{\tiny natural}\\
& (\cl)_{ab}\ar@{-->}[rd]^{\exists!w}\\
\gC_{ab}\ar[rr]^{c_{\gvp_{ab}}}\ar[ru]^{(c_{\gvp})_{ab}}&&\gC_{ab}^{^{\gvp_{ab}}}\\
}
\]
Since $c_{\gvp_{ab}}$ is injective, so is $(c_{\gvp})_{ab}$.
This shows (1).  Then (2) follows from (1), since by the universality
property of $\gC_{\infty}$ there is a map $\gvp_{\infty}\colon\gC\to\gC_{\infty},$
such that $\gvp_{\infty}\circ\psi_2=\gvp_2,$ where $\psi_2\colon\gC_{\infty}\to\gC_2$
is the canonical map.  Hence $(\gvp_{\infty})_{ab}\circ(\psi_2)_{ab}=(\gvp_2)_{ab}$. Since $\gvp_2=c_{\gvp},$ 
and by (1), $(c_{\gvp})_{ab}$ is injective,  (2) follows.
\end{proof}

\section{Stability of the normal closures tower}
\noindent
The purpose of this section is to prove:

\begin{thm}\label{thm series of fnc}
Let $\gC$ and $G$ be finite groups and let $\gvp\colon\gC\to G$ be a homomorphism.
Assume that $G=\lan\gvp(\gC)^G\ran$.  Then the  normal closures tower 
corresponding to $\gvp$ terminates after a finite number of steps.
Furthermore, the last term of the normal closures tower has size less or
equal $|\gC|\cdot f(|G|),$ where $f$ is defined in equation \eqref{eq f} below.
\end{thm}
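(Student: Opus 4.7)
The plan is to establish two properties of the sequence of orders $|\Gamma_i|$ in the tower \eqref{eq tower of fnc}: monotonicity (non-decreasing) and a uniform bound by $|\Gamma|\cdot f(|G|)$. Together these force $|\Gamma_i|$ to stabilize, and stabilization of sizes forces $\widehat{\varphi_i}$ to become an isomorphism, whence the tower itself becomes constant.

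For the set-up, I would first check that the hypothesis propagates and that each step of the tower is a central extension. By Lemma~\ref{lem gen of WgC}(1) applied to $\varphi_i$, $\Gamma_{i+1}$ is generated by $\Gamma_{i+1}$-conjugates of $c_{\varphi_i}(\Gamma)=\varphi_{i+1}(\Gamma)$; property (NM2) makes this conjugation action factor through $\widehat{\varphi_i}\colon\Gamma_{i+1}\to\Gamma_i$. A straightforward induction then transports the assumption $\langle\varphi_i(\Gamma)^{\Gamma_i}\rangle=\Gamma_i$ up the tower, so by Lemma~\ref{lem sfnc surj}(2) every $\widehat{\varphi_i}$ is surjective; by Lemma~\ref{lem basic cm}(1) its kernel is central. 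Hence $\Gamma_{i+1}$ is a central extension of $\Gamma_i$ and $|\Gamma_i|\le|\Gamma_{i+1}|$. Moreover, once $\widehat{\varphi_i}$ is an isomorphism, the universal property of the free normal closure identifies $\varphi_{i+1}=c_{\varphi_i}$ with $\varphi_i$ under that identification, so $\Gamma_{i+2}\cong\Gamma_{i+1}$, and the tower becomes constant thereafter. Thus it suffices to bound $|\Gamma_i|$ uniformly.

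The main obstacle is producing the uniform bound. Each $\Gamma_i$ is an iterated central extension of $G$, hence its kernel $K_i^*:=\ker(\Gamma_i\to G)$ is nilpotent. From Lemma~\ref{lem gen of WgC}(1), $\Gamma_i$ is generated by $\Gamma_i$-conjugates of $\varphi_i(\Gamma)$; since the conjugation action on this subgroup factors through $G$ (the stage-to-stage kernels being central), there are at most $[G:N_G(\varphi(\Gamma))]$ distinct conjugates, each a homomorphic image of $\Gamma$. Hence $\Gamma_i$ admits a generating set of cardinality $d\le|\Gamma|\cdot|G|$ that depends only on $|\Gamma|$ and $|G|$. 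By Theorem~\ref{thm surj fnc}, each stage-to-stage kernel equals the relative Schur multiplier $H_2(\Gamma_{i-1},\Gamma)$. The function $f(|G|)$ will be defined so as to bound $|K_i^*|$ for any finite nilpotent-by-$G$ extension that is $d$-generated and arises by iterated relative-multiplier central extensions; a Schur-type bound on $K_i^*\cap[\Gamma_i,\Gamma_i]$ via the commutator pairing $G\times G\to K_i^*\cap[\Gamma_i,\Gamma_i]$, combined with control on the $d$-generated abelian quotient $(\Gamma_i)_{ab}$ (whose image surjects onto $G_{ab}$), yields such an $f$. Once $f$ is in place, monotonicity and the uniform bound $|\Gamma|\cdot f(|G|)$ force the sizes to stabilize after finitely many steps, and by the previous paragraph the tower then terminates, completing the proof.
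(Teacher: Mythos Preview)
Your overall architecture is sound: propagate the hypothesis $\Gamma_i=\langle\varphi_i(\Gamma)^{\Gamma_i}\rangle$, observe that each $\widehat{\varphi_i}$ is a surjective normal map so $|\Gamma_i|$ is non-decreasing, and seek a uniform bound on $|\Gamma_i|$. That is exactly the skeleton of the paper's proof. The genuine gap is entirely in the ``main obstacle'' paragraph, where the uniform bound is supposed to be produced.

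First, a concrete error: the assertion that conjugation on $\varphi_i(\Gamma)$ inside $\Gamma_i$ ``factors through $G$'' is not justified. The stage-to-stage kernels are central, and this does imply $K_i^*:=\ker(\Gamma_i\to G)\le Z_\infty(\Gamma_i)$, but elements of the hypercenter need not normalize an arbitrary subgroup, so you cannot conclude there are only $[G:N_G(\varphi(\Gamma))]$ conjugates. More importantly, even a uniform bound on the number of generators of $\Gamma_i$ would not by itself bound $|\Gamma_i|$: a $d$-generated nilpotent-by-$G$ group can be arbitrarily large. Your sketch of a ``Schur-type bound on $K_i^*\cap[\Gamma_i,\Gamma_i]$'' together with ``control on $(\Gamma_i)_{ab}$'' does not close this gap; nothing you wrote bounds the image of $K_i^*$ in $(\Gamma_i)_{ab}$. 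Finally, note that $f$ is not something you get to design: the theorem asserts the bound for the \emph{specific} function $f$ of equation~\eqref{eq f}.

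The paper obtains the bound in two clean strokes. Since $K_i^*\le Z_\infty(\Gamma_i)$, the quotient $\Gamma_i/Z_\infty(\Gamma_i)$ is a quotient of $G$; Theorem~\ref{thm hypercentral} (the Kurdachenko--Otal--Subbotin generalization of Baer's theorem) then yields a normal subgroup $L\nsg\Gamma_i$ with $|L|\le f(|G|)$ and $\Gamma_i/L$ nilpotent. The second stroke is Lemma~\ref{lem normal closure nil}: in a nilpotent group the normal closure of any subgroup is the subgroup itself, so $\Gamma_i/L$ equals the image of $\varphi_i(\Gamma)$ and hence $|\Gamma_i/L|\le|\Gamma|$. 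This gives $|\Gamma_i|\le|\Gamma|\cdot f(|G|)$ directly. Your proposal is missing precisely these two ingredients; the rest of your outline is fine.
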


We first make a general observation about the  normal closures tower.

\begin{lemma}\label{lem sfnc surj}
Let $\gvp\colon\gC\to G$ be a group homomorphism.  Suppose $G=\lan\gvp(\gC)^G\ran,$
then
\begin{enumerate}
\item 
$\gC_i=\lan\gvp_i(\gC)^{\gC_i}\ran,$ for all integers $i\ge 1;$ 

\item
$\gC_{i+1}$ is a central extension of $\gC_i,$ for all $i\ge 1$.
\end{enumerate}
where $\gC_i$ are
the terms of the normal closures tower as in diagram \eqref{eq tower of fnc}.
\end{lemma}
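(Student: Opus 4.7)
The plan is to prove both statements simultaneously by induction on $i\ge 1$, with the base case $i=1$ being exactly the hypothesis $G=\lan\gvp(\gC)^G\ran$.

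For the inductive step, assume $\gC_i=\lan\gvp_i(\gC)^{\gC_i}\ran$. I would first observe that by Lemma \ref{lem gen of WgC}(2), the image $\W{\gvp_i}(\gC_{i+1})$ equals the normal closure of $\gvp_i(\gC)$ in $\gC_i$, which by the inductive hypothesis is all of $\gC_i$. Hence $\W{\gvp_i}\colon\gC_{i+1}\to\gC_i$ is surjective. Combined with $\ker\W{\gvp_i}\le Z(\gC_{i+1})$ from Lemma \ref{lem basic cm}(1), this immediately gives statement (2).

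To obtain statement (1) for $i+1$, I would apply Lemma \ref{lem gen of WgC}(1) to the map $\gvp_i\colon\gC\to\gC_i$, which says that $\gC_{i+1}$ is generated by the $\gC_i$-translates $\{c_{\gvp_i}(\gC)^{g}\mid g\in\gC_i\}$. Now I would use the surjectivity of $\W{\gvp_i}$ established in the previous step: every $g\in\gC_i$ equals $\W{\gvp_i}(\tilde g)$ for some $\tilde g\in\gC_{i+1}$, and then axiom (NM2) gives $a^g=a^{\W{\gvp_i}(\tilde g)}=a^{\tilde g}$ for every $a\in\gC_{i+1}$. Therefore the generating set $\{c_{\gvp_i}(\gC)^{g}\mid g\in\gC_i\}$ is already contained in $\{c_{\gvp_i}(\gC)^{\tilde g}\mid \tilde g\in\gC_{i+1}\}=\gvp_{i+1}(\gC)^{\gC_{i+1}}$, yielding $\gC_{i+1}=\lan\gvp_{i+1}(\gC)^{\gC_{i+1}}\ran$ and closing the induction.

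There is no real obstacle here; the only subtle point is the interplay between the external $G$-action and internal conjugation, which is precisely what axiom (NM2) delivers once surjectivity of $\W{\gvp_i}$ has been secured.
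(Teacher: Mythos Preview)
Your proposal is correct and follows essentially the same approach as the paper's proof: both argue by induction, using Lemma~\ref{lem gen of WgC}(1)--(2) together with the fact that once $\W{\gvp_i}$ is surjective, the external $\gC_i$-action on $\gC_{i+1}$ coincides with inner conjugation via (NM2). The only cosmetic difference is the indexing of the induction---the paper takes ``$\gC_i$ is a central extension of $\gC_{i-1}$'' as the inductive hypothesis and deduces (1) at level $i$ then (2) at level $i$, whereas you take (1) at level $i$ as hypothesis and deduce (2) at level $i$ then (1) at level $i+1$---but the content is identical.
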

\begin{proof}
By Lemma \ref{lem gen of WgC}(2) the image of $\W{\gvp_i}$ in $\gC_i$
is $\lan\gvp_i(\gC)^{\gC_i}\ran$.  By hypothesis,  $\W{\gvp_1}$ is surjective,
that is, $\gC_2$ is a central extension of $\gC_1=G$.  Now let $i\ge 2,$ and suppose
that $\gC_i$ is a central extension of $\gC_{i-1}$.  We show that $\gC_{i+1}$
is a central extension of $\gC_i$.   Indeed, by Lemma \ref{lem gen of WgC}(1),
$\gC_i=\gC^{\gvp_i}$    
is generated by $\{(\gvp_i(\gC))^g\mid g\in \gC_{i-1}\}$.  However, since $\gC_i$
is a central extension of $\gC_{i-1},$ this just means that (1) holds.  Thus
$\W{\gvp_i}$ is surjective.  Since $\W{\gvp_i}$ is a normal map, (2) holds.
\end{proof}

For Theorem \ref{thm hypercentral} below, let us recall
that the upper central series of  any group $M$ is the ascending series
\[
1= Z_0(M)\le Z_1(M)\le\dots Z_{\ga}(M)\le Z_{\ga+1}(M)\le\dots Z_{\gd}(M)=Z_{\infty}(M),
\]
given by  $Z_1(M) = Z(M)$ is the center of $M$, and recursively by
$Z_{\ga+1}(M)/Z_{\ga}(M)=Z(M/Z_{\ga}(M))$ for all ordinals $\ga,$ and $Z_{\gl}(M)=\bigcup_{\mu<\gl}Z_{\mu}$
for every limit ordinal $\gl$. The last term $Z_{\infty}(M)$ of this series 
is called the {\it hyper-center} of $M$.  The group $M$ is {\it hyper-central} if $Z_{\infty}(M)=M$.

\begin{thm}[see Theorem B, p.~2598 in \cite{KOS}]\label{thm hypercentral}
There exists an integer valued function $f$ such that if  $M$ is a group satisfying
$|M/Z_{\infty}(M)|=t<\infty,$ 
then  $M$ contains a finite normal subgroup $L,$ with $|L|\le f(t)$ and such
that $M/L$ is hyper-central  Here
\begin{equation}\label{eq f}
f(t)=t^k,\quad\text{where}\quad k=\half(log_pt + 1)\text{ and $p$ is the least prime divisor of $t$}.
\end{equation} 
\end{thm}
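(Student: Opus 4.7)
The plan is to reduce the theorem to the Wiegold sharpening of Schur's theorem on centre-by-finite groups. Set $Z := Z_{\infty}(M)$, so that $|M/Z| = t$; by maximality of the hypercenter, the quotient $M/Z$ has trivial centre (else the upper central series would extend past $Z$). A key observation is that $Z$ is itself hypercentral as an abstract group, since the upper central series of $M$, restricted to $Z$, is an ascending central series reaching $Z$; therefore every quotient of $Z$ is also hypercentral. This suggests looking for a finite normal subgroup $L \trianglelefteq M$ with $LZ = M$, so that $M/L \cong Z/(L \cap Z)$ is a quotient of the hypercentral group $Z$ and hence hypercentral.

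Concretely, I would construct $L$ as the normal closure in $M$ of a set of coset representatives $m_{1}, \ldots, m_{t}$ of $Z$:
\[
L := \langle m_{1}, \ldots, m_{t}\rangle^{M}.
\]
By construction $LZ = M$, so the hypercentrality of $M/L$ follows from the previous paragraph. The substantive part is the bound $|L| \le f(t)$. For this I would try to exhibit a central subgroup of $L$ of index at most $t$: note that $L/(L \cap Z)$ embeds in $M/Z$ and so has order dividing $t$, which reduces the problem to showing that $L \cap Z$ lies in $Z(L)$. Granting this, $L$ is a centre-by-finite group with $|L/Z(L)| \le t$, and the Wiegold refinement of Schur's theorem yields $|[L,L]| \le t^{(\log_{p} t + 1)/2} = f(t)$. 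A careful choice of the representatives $m_{i}$ — arranged to avoid contributing any nontrivial abelian factor beyond the derived subgroup — should upgrade this to $|L| \le f(t)$.

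The main obstacle is producing the central subgroup of $L$ of index $\le t$. The naive candidate $L \cap Z$ need not be central in $L$, because elements of the hypercentre $Z_{\infty}(M)$ need not commute with elements of $M$ lying outside the ordinary centre $Z_{1}(M)$. The remedy I would attempt is to iterate along the transfinite upper central series $\{Z_{\alpha}(M)\}_{\alpha}$, using at each stage the inclusion $[L,\, L \cap Z_{\alpha+1}(M)] \subseteq L \cap Z_{\alpha}(M)$, and collapsing these commutator relations either transfinitely or after a preliminary reduction to the case where $Z = Z_{n}(M)$ for some finite $n$. Alternatively, one can bypass this by applying Baer's generalization of Schur (if $M/Z_{n}(M)$ is finite, then $\gamma_{n+1}(M)$ is finite) and iterating the Wiegold bound through the finite stages of the series. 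Either way, extracting the sharp Wiegold exponent $(\log_{p} t + 1)/2$ is the delicate quantitative point, and is the reason the statement is attributed to the specific work \cite{KOS} rather than to the classical Schur--Baer tradition.
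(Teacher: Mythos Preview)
The paper does not prove this theorem. It is quoted verbatim from \cite{KOS} (Theorem~B) and used as a black box in the proof of Lemma~\ref{lem M} and Theorem~\ref{thm series of fnc}. So there is no proof in the paper to compare your attempt against.

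As for your sketch on its own terms: the overall architecture is sound --- reduce to a centre-by-finite situation and invoke a Schur--Wiegold bound --- and you are honest that the crux is unproved. Two concrete issues. First, taking $L$ to be the normal closure of a full set of coset representatives gives $LZ = M$ but gives you no handle on $|L|$ beyond $|L'|$; the Wiegold bound controls the derived subgroup, not the group itself, and your ``careful choice of representatives'' remark is where the real work hides. Second, and more seriously, the step you flag as the main obstacle --- getting $L\cap Z \le Z(L)$, or otherwise producing a central subgroup of $L$ of index at most $t$ --- does genuinely fail for the naive $L$, and your proposed transfinite iteration does not obviously terminate with the stated bound. The actual argument in \cite{KOS} proceeds differently: one first shows (their Theorem~A) that $|\gamma_{\infty}(M)| \le f(t)$ for the last term $\gamma_{\infty}(M)$ of the (transfinite) lower central series, and then takes $L = \gamma_{\infty}(M)$, which is automatically normal and yields $M/L$ hypercentral. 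The bound on $|\gamma_{\infty}(M)|$ is obtained by a careful induction combining the Wiegold estimate with the structure of $M/Z_\infty(M)$. If you want to reconstruct the proof, that is the route to follow; what you wrote is a plausible heuristic but not yet a proof.
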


\begin{lemma}\label{lem normal closure nil}
Let $N$ be a nilpotent group. Suppose $T\le N$ is
a subgroup such that $N=\lan T^N\ran$.  Then $N=T$.
\end{lemma}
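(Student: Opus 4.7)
The plan is to induct on the nilpotency class $c$ of $N$. The base case $c\le 1$ (i.e.\ $N$ abelian) is immediate: every subgroup of $N$ is normal, so $\lan T^N\ran=T$, and the hypothesis forces $T=N$.

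For the inductive step, set $Z:=Z(N)$ and pass to the quotient $\bar N:=N/Z$, which is nilpotent of class $c-1$. Let $\bar T:=TZ/Z$. Taking the image under the canonical projection commutes with forming normal closures, so
\[
\lan \bar T^{\bar N}\ran \;=\; \overline{\lan T^N\ran}\;=\;\bar N.
\]
By the induction hypothesis, $\bar T=\bar N$, which translates to $N=TZ$.

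The final step is to observe that this factorization forces $T$ to be \emph{normal} in $N$. Indeed, any $n\in N$ can be written $n=tz$ with $t\in T$ and $z\in Z$; since $z$ is central and $t$ normalizes $T$, we obtain
\[
T^{n}\;=\;T^{tz}\;=\;(T^{t})^{z}\;=\;T^{z}\;=\;T.
\]
Hence $T\nsg N$, so $\lan T^N\ran=T$. Combined with the hypothesis $\lan T^N\ran=N$, this yields $T=N$.

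There is no serious obstacle here; the argument is entirely standard and rests on two elementary observations: (i) the normal-closure operation commutes with quotients, which reduces the problem by one step in the upper central series, and (ii) once $N=TZ(N)$, central elements conjugate $T$ trivially, forcing normality. The only thing to be mindful of is using the upper (or equivalently lower) central series of $N$ to ensure the induction on $c$ is well-founded, which is built into the definition of nilpotent.
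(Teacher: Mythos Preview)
Your proof is correct. The paper, however, takes a different and shorter route: it observes that for a nilpotent group $N$ the Frattini quotient $N/\Phi(N)$ is abelian, so the hypothesis $N=\lan T^N\ran$ descends to $N=T\Phi(N)$, and then the non-generator property of $\Phi(N)$ gives $N=T$ in one stroke.

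Your induction on the nilpotency class via the upper central series is more hands-on but has the merit of being entirely self-contained: it does not invoke the Frattini subgroup or the fact that proper subgroups lie inside maximal ones, and so it applies transparently to arbitrary (possibly infinite) nilpotent groups. The paper's argument is slicker in the finite setting in which the lemma is actually used (Lemma~\ref{lem M}), but yours makes the mechanism---reduce modulo the center, then use $N=TZ(N)$ to force $T\nsg N$---completely explicit. Both approaches ultimately exploit the same phenomenon, namely that in a nilpotent group a subgroup whose normal closure is everything must already surject onto the abelianization (equivalently, onto $N/\Phi(N)$ or onto $N/Z(N)$ after iteration).
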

\begin{proof}
The Frattini factor group $N/\Phi(N)$ is abelian, so
since  $N=\lan T^N\ran,$ we get that $N=T\Phi(N),$
so $N=T$.
\end{proof}

\begin{lemma}\label{lem M}
Let $\gC$ and $G$ be finite groups.  Assume that $M$
is a finite group such that 
\begin{itemize}
\item[(a)]
$M/Z_{\infty}(M)$ is isomorphic to a quotient of $G.$

\item[(b)]
There exists a homomorphism $c\colon\gC\to M$ such that $M=\lan c(\gC)^M\ran$.
\end{itemize}
Then $|M|\le |\gC|\cdot f(|G|),$ where $f$ is as in equation \eqref{eq f}.
\end{lemma}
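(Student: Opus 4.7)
The plan is to combine the hyper-central structure theorem (Theorem \ref{thm hypercentral}) with Lemma \ref{lem normal closure nil}. Set $t := |M/Z_\infty(M)|$. By hypothesis (a), $M/Z_\infty(M)$ is a quotient of $G$, so $t$ divides $|G|$ and in particular $t \le |G|$. Apply Theorem \ref{thm hypercentral} to $M$: there exists a finite normal subgroup $L \nsg M$ with $|L| \le f(t)$, and a monotonicity check on the formula $f(t)=t^k$ with $k=\half(\log_p t +1)$ gives $f(t)\le f(|G|)$, so $|L|\le f(|G|)$. Moreover $M/L$ is hyper-central; since $M$ is finite this forces $M/L$ to be nilpotent.

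Next, transfer hypothesis (b) to the quotient. Let $\pi\colon M \to M/L$ be the natural map and set $\bar c := c\circ \pi\colon \gC \to M/L$. Since $\pi$ is surjective and commutes with conjugation,
\[
M/L \;=\; \pi(M) \;=\; \pi\bigl(\lan c(\gC)^M\ran\bigr) \;=\; \lan \bar c(\gC)^{M/L}\ran.
\]
Apply Lemma \ref{lem normal closure nil} to the nilpotent group $N := M/L$ and the subgroup $T := \bar c(\gC)$: the conclusion is $M/L = \bar c(\gC)$. Since $\bar c(\gC)$ is a homomorphic image of $\gC$, we obtain $|M/L| \le |\gC|$.

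Combining the two bounds,
\[
|M| \;=\; |L|\cdot |M/L| \;\le\; f(|G|)\cdot |\gC|,
\]
which is the desired inequality.

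The main (minor) technical point is the monotonicity argument $f(t)\le f(|G|)$ for $t\le|G|$; everything else is a clean two-step reduction, with Theorem \ref{thm hypercentral} carving off a small normal subgroup $L$ of bounded order, and Lemma \ref{lem normal closure nil} pinning down the nilpotent quotient $M/L$ exactly as the image of $\gC$.
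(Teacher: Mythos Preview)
Your proof is correct and follows essentially the same route as the paper's: apply Theorem~\ref{thm hypercentral} to obtain $L\nsg M$ with $|L|\le f(|G|)$ and $M/L$ nilpotent, then use Lemma~\ref{lem normal closure nil} to conclude $M/L=\bar c(\gC)$ and hence $|M/L|\le|\gC|$. The paper likewise notes the monotonicity $f(|G/K|)\le f(|G|)$ without elaboration, so your treatment is at the same level of detail.
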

\begin{proof}
By Theorem \ref{thm hypercentral} and by (a) we can find  $L\nsg M$ such that $|L|\le f(|G|)$ and such $M/L$ is hyper-central
(note that $f(|G/K|)\le f(|G|),$ for any  $K\nsg G$).
Of course $M/L$ is nilpotent, as $M/L$ is finite.  Since by (b) the normal
closure of the image of $c(\gC)$ in $M/L$ is $M/L$,   Lemma 
\ref{lem normal closure nil} implies that $M/L$ is equal to that image.  In 
particular $|M/L|\le |\gC|$.   This proves the lemma.
\end{proof}

\begin{proof}[Proof of Theorem \ref{thm series of fnc}]
Let $i\ge 1$. We apply Lemma \ref{lem M} with $M=\gC_i$ and with $c=\gvp_i$.
By Lemma \ref{lem sfnc surj}(1), hypothesis (b) of Lemma \ref{lem M} holds, and
by Lemma \ref{lem sfnc surj}(2), hypothesis (a) of Lemma \ref{lem M} holds.
Hence, $|\gC_i|\le |\gC|\cdot f(|G|)$.  This of course proves the Theorem.
\end{proof}

\section{Central $f$-extensions and relative Schur multiplier}\label{sub f-central}
\noindent
The purpose of this section is to prove Theorem \ref{thm surj fnc} 
of the introduction. Hence we assume that $\gvp\colon\gC\to G$ satisfies
$G=\lan\gvp(\gC)^G\ran$.  
By Lemma \ref{lem gen of WgC}(2), the map $\W{\gvp}\colon\cl\to G$ is surjective,
so $\cl$ is a central extension of $G$.  Further, for any factorization
$\gC\xrightarrow{\psi} M\xrightarrow{n} G$ of $\gvp,$ with $n$ a normal
map, $M$ is a central extension of $G$.  This is because by Lemma \ref{lem basic cm}(2),
$n(M)$ is normal in $G,$ so $n$ is surjective since $\gvp(\gC)\le n(M)$. Also by Lemma \ref{lem basic cm}(1), $\ker(n)\le Z(M)$.
Thus $\cl$ is {\it universal} amongst all central extensions $M$ of $G$
such that $\gvp$ factors through $M\to G$.  Indeed, for any such $M$
there is a unique normal morphism $\W{\psi}\colon\cl\to M$ rendering
diagram \eqref{diag fnc} commutative.

To identify $\ker\gvp$ we show that $(\cl,c_{\gvp})$ is a {\it universal
central $\gvp$-extension} in a sense to be made precise shortly.
The detailed account of this identification will appear in \cite{FS3}.
Here we only give the basic definitions and the main results of \cite{FS3}.
(We note that \cite{FS3} proves a more general result, see 
Proposition \ref{prop f-central} and Theorem \ref{thm f-central} below.) 
 
Let us consider central extensions
\[
0\to A\to M\to G\to 1.
\]
of $G$.  Let us also {\it fix a map}
\[
f\colon\gC\to G
\]
The following are the basic concepts used in this section.

\begin{Defs}\label{def f-central}
\begin{enumerate}
\item
A central $f$-extension of $G$  is a pair $(M,\psi),$ where $M$ is a central extension of $G$
with kernel $A,$
{\it together with} a map $\psi\colon\gC\to M$ that factorizes  $f$
as in diagram \eqref{diag f-central} below.

\begin{equation}\label{diag f-central}
\xymatrix{
&&&\Gamma\ar[dl]_{\psi}\ar[dr]^f\\
0\ar[r]&A\ar[r]&M\ar[rr]^n&&G\ar[r]& 0\\
}
\end{equation}
\item
A map between two central $f$-extensions $(M,\psi)$ and $(M',\psi')$ of $G$ with kernels $A,A'$ respectively,
 is a map of the underlying extensions
which is the identity on $G,$ as in the commutative diagram \eqref{diag map f-central}
below.
$M$ and $M'$  are called {\it equivalent} if $\gt$ below is an isomorphism and $\kappa$ below is the
identity.

\begin{equation}\label{diag map f-central}
\xymatrix{
0\ar[r] &A\ar[r]\ar[dd]_{\kappa} & M\ar[rr]^n\ar[dd]_{\tau} &&G\ar[r]\ar[dd]^{=} & 1\\
&&&\ar[ru]^f\Gamma\ar[rd]^f\ar[lu]_{\psi}\ar[ld]_{\psi'}\\
0\ar[r]&A'\ar[r]&M'\ar[rr]^{n'} && G\ar[r]&1\\
}
\end{equation}
\end{enumerate}
\end{Defs} 
 
In the following results, 
for an abelian group $A$, by $H_*(G,\gC; A)$ we mean\linebreak 
 $H_*(BG\cup_{B f}{\rm Cone}(B\gC); A)$, and similarly for relative cohomology.

\begin{prop}[\cite{FS3}]\label{prop f-central}
The equivalence classes of central $f$-extension of $G$ with a given  kernel $A$ 
have a natural abelian group structure, and are classified by
the relative cohomology group $H^2(G,\gC; A)$, with coefficients $A$.
\end{prop}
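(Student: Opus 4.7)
The plan is to exhibit an explicit bijection between equivalence classes of central $f$-extensions of $G$ with kernel $A$ and the relative cohomology group $H^2(G,\gC;A)$, defined as the cohomology of the mapping cone of $Bf\colon B\gC\to BG$. Once this bijection is established, the abelian group structure on the right-hand side transports to the set of equivalence classes, and a short verification identifies it with the Baer-type sum described below.

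First I would unfold $H^2(G,\gC;A)$ via the relative bar complex: a normalized relative $2$-cocycle is a pair $(c,b)$ with $c\colon G\times G\to A$ a normalized $2$-cocycle of $G$ with values in the trivial module $A$ and $b\colon\gC\to A$ a $1$-cochain satisfying $\delta b=f^{\ast}c$, that is,
\[
b(\gc_1)-b(\gc_1\gc_2)+b(\gc_2)=c(f(\gc_1),f(\gc_2)).
\]
Two such pairs are cohomologous when they differ by $(\delta a,\,a\circ f)$ for some $1$-cochain $a\colon G\to A$.

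The map from cocycles to $f$-extensions is the classical construction enriched with a lift: given $(c,b)$, put $M=A\times G$ with the twisted multiplication determined by $c$, and define $\psi(\gc)=(b(\gc),f(\gc))$. The homomorphism property of $\psi$ is precisely the relation $\delta b=f^{\ast}c$, and by construction $n\circ\psi=f$. Conversely, given $(M,\psi)$, choose a normalized set-theoretic section $s\colon G\to M$, let $c(g_1,g_2)=s(g_1)s(g_2)s(g_1g_2)^{-1}$ be the usual $2$-cocycle, and set $b(\gc)=\psi(\gc)\,s(f(\gc))^{-1}\in A$, which makes sense since both $\psi(\gc)$ and $s(f(\gc))$ lie over $f(\gc)$. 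A short verification shows $(c,b)$ is a relative $2$-cocycle and that its class in $H^2(G,\gC;A)$ does not depend on the choice of $s$.

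The abelian group structure on equivalence classes is then introduced by a Baer-type sum: given $(M_1,\psi_1)$ and $(M_2,\psi_2)$, form the fibre product $M_1\times_G M_2$, quotient by the antidiagonal copy of $A$, and take the paired lift $\gc\mapsto[(\psi_1(\gc),\psi_2(\gc))]$. Under the bijection this operation becomes pointwise addition of relative cocycle pairs, so the abelian group axioms are automatic. The main obstacle is not either half of the bijection in isolation but the matching of equivalence relations: one must check that a morphism of $f$-extensions as in diagram \eqref{diag map f-central} corresponds precisely to a cohomology between the associated relative cocycles. This requires care because $\tau$ must be compatible both with the central extension structure and with the two lifts $\psi,\psi'$, and the constraint that the induced $\kappa$ on kernels is the identity is exactly what forces the difference to be a coboundary $(\delta a,\,a\circ f)$ rather than a more general cochain.
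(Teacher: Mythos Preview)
The paper does not actually prove this proposition: it is stated with attribution to \cite{FS3} and the surrounding text explicitly says ``The detailed account of this identification will appear in \cite{FS3}.~Here we only give the basic definitions and the main results of \cite{FS3}.'' So there is no proof in the paper to compare your proposal against.

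That said, your approach is the standard one and is essentially correct. Identifying relative $2$-cocycles with pairs $(c,b)$ satisfying $\delta b=f^{\ast}c$ is exactly the description of the cochain mapping cone of $f^{\ast}\colon C^{\ast}(G;A)\to C^{\ast}(\gC;A)$, which computes $H^{\ast}(BG\cup_{Bf}\mathrm{Cone}(B\gC);A)$. The two constructions you give are mutually inverse up to the equivalence relations on each side, and the Baer sum matches addition of cocycle pairs. One minor point: be careful with sign conventions, since whether the relation reads $\delta b=f^{\ast}c$ or $\delta b=-f^{\ast}c$ depends on how you orient the mapping cone differential and on the sign in the twisted multiplication on $A\times G$; your displayed identity and the homomorphism check for $\psi$ should be made consistent. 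Also, while you note that verifying the match of equivalence relations is the delicate step, you should actually carry it out rather than only flag it, since that is where the requirement $\kappa=\mathrm{id}_A$ in Definition~\ref{def f-central}(2) enters in an essential way.
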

 
Proposition \ref{prop f-central} yields the following theorem:

\begin{thm}[\cite{FS3}]\label{thm f-central}
Assume that the map $f_{ab}\colon \gC_{ab}\to G_{ab}$ induced on the abelianizations is
surjective.  Then
there exists a universal  central $f$-extension
$(U,\eta)$ of $G$ with kernel  $H_2(G,\gC;\zz)$.
For any central $f$-extension  $(E,\psi)$ of $G$ there
is a unique map of central $f$-extensions (as in Definition \ref{def f-central}(2))
from $U$ to $M$.
\end{thm}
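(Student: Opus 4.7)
The plan is to reduce the theorem to Proposition \ref{prop f-central} together with the Universal Coefficient Theorem for the relative cohomology of the pair.

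First I would show that if $f_{ab}$ is surjective then $H_1(G, \gC; \zz) = 0$. From the long exact sequence in homology of the cofibre sequence $B\gC \to BG \to X = BG \cup_{Bf} \mathrm{Cone}(B\gC)$,
\[
H_1(\gC; \zz) \xrightarrow{f_{ab}} H_1(G; \zz) \to H_1(G, \gC; \zz) \to 0,
\]
so $H_1(G, \gC; \zz) \cong \operatorname{coker}(f_{ab}) = 0$. Setting $H := H_2(G, \gC; \zz)$, the Universal Coefficient Theorem then gives a natural isomorphism
\[
H^2(G, \gC; A) \;\cong\; \Hom(H, A)
\]
for every abelian group $A$.

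Second, applying Proposition \ref{prop f-central} with $A = H$ to the element $\operatorname{id}_H \in \Hom(H, H) \cong H^2(G, \gC; H)$ produces a central $f$-extension $(U, \eta)$ of $G$ with kernel $H$; this is the candidate universal object. Given any central $f$-extension $(E, \psi)$ with kernel $A$, its class $[E] \in H^2(G, \gC; A)$ corresponds under the above isomorphism to a unique $\kappa \in \Hom(H, A)$. Forming the pushout central $f$-extension $U \oplus_H A$ of $U$ along $\kappa$, its class is $\kappa_*[U] = \kappa$, so by Proposition \ref{prop f-central} the pushout is equivalent, as a central $f$-extension, to $(E, \psi)$; composing the canonical map $U \to U \oplus_H A$ with this equivalence produces a map $\tau : U \to E$ of central $f$-extensions.

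For uniqueness, let $\tau, \tau' : U \to E$ be two such maps. Both induce the identity on $G$, so $\chi(u) := \tau(u) \tau'(u)^{-1} \in A \subseteq Z(E)$, and centrality of $A$ makes $\chi : U \to A$ a group homomorphism. A map of central extensions $U \to E$ over the identity of $G$ is forced to induce on the kernel the unique $\kappa : H \to A$ determined by $\kappa_*[U] = [E]$, so $\chi|_H = 0$ and $\chi$ descends to a homomorphism $G \to A$. From $\tau \circ \eta = \psi = \tau' \circ \eta$ one gets $\chi \circ f = 0$, so $\chi$ vanishes on $f(\gC)$, hence on $\lan f(\gC)^G \ran$ (as $A$ is central), and on $[G, G]$ (as $A$ is abelian). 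Thus $\chi$ factors through $G_{ab}/\operatorname{im}(f_{ab}) \cong H_1(G, \gC; \zz) = 0$ and $\tau = \tau'$.

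The delicate point is the existence step: moving from equality of cohomology classes to an honest group homomorphism $\tau$. This uses Proposition \ref{prop f-central} in its strong form---that the classification is by equivalence classes of central $f$-extensions, lifts included, not just by underlying extensions of $G$---so that the pushout of $(U, \eta)$ along $\kappa$ is $f$-equivalent to $(E, \psi)$ and supplies the isomorphism needed to produce $\tau$. Uniqueness, once the vanishing of $H_1(G, \gC; \zz)$ is in hand, is the relatively clean argument above.
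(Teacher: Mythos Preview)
The paper does not actually prove this theorem: it is quoted from the companion paper \cite{FS3}, and only the statement (together with Proposition~\ref{prop f-central}) is recorded here. There is therefore no proof in the present paper to compare your proposal against.

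That said, your argument is the standard one and is correct. The vanishing $H_1(G,\gC;\zz)=0$ from surjectivity of $f_{ab}$ is precisely what collapses the Universal Coefficient sequence to a natural isomorphism $H^2(G,\gC;A)\cong\Hom(H_2(G,\gC;\zz),A)$, and once that is available the extension classified by $\mathrm{id}_{H_2}$ is universal by the usual representability argument. One point worth tightening: in the uniqueness step you assert that any map $\tau\colon U\to E$ of central $f$-extensions over $G$ is forced to induce on kernels the particular $\kappa$ with $\kappa_*[U]=[E]$. This is true but not immediate from Definition~\ref{def f-central}(2); it follows because such a $\tau$ factors through the pushout $U\oplus_H A$ along $\kappa_\tau:=\tau|_H$, and the induced map $U\oplus_H A\to E$ is an $f$-equivalence (it is the identity on both $A$ and $G$, and carries the pushed-forward lift $\eta$ to $\tau\circ\eta=\psi$), so $\kappa_\tau$ is the unique element of $\Hom(H,A)$ corresponding to $[E]$ under the UCT isomorphism. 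With that justification made explicit, the argument is complete.
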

 
As an immediate corollary to Theorem \ref{thm f-central} we get

\begin{thm}\label{thm center of fnc}
Let $\gvp\colon\gC\to G$ be a group homomorphism.  
Assume that $G=\lan\gvp(\gC)^G\ran$.  Then
$\cl$  is the universal central $\gvp$-extension of $G$ of Theorem \ref{thm f-central}.
In particular the kernel of $\W\gvp$ is $H_2(G,\gC;\zz)$.
\end{thm}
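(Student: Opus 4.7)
The plan is to identify $(\cl, c_\gvp)$ with the universal central $\gvp$-extension $(U, \eta)$ produced by Theorem \ref{thm f-central}, and then read off the kernel directly from the conclusion of that theorem.

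First I would verify the hypothesis of Theorem \ref{thm f-central}, namely surjectivity of $\gvp_{ab}\colon \gC_{ab}\to G_{ab}$. Conjugation becomes trivial on the abelianization, so the assumption $G=\langle\gvp(\gC)^G\rangle$ forces $G_{ab}$ to be generated by the image of $\gvp$, i.e.\ $\gvp_{ab}$ is surjective. Second, I would confirm that $(\cl, c_\gvp)$ is itself a central $\gvp$-extension: Lemma \ref{lem gen of WgC}(2) together with the hypothesis gives surjectivity of $\W\gvp$, Lemma \ref{lem basic cm}(1) applied to the normal map $\W\gvp$ gives $\ker\W\gvp\le Z(\cl)$, and the factorization $\gvp = c_\gvp\circ\W\gvp$ is built into the construction (Definition \ref{def normal closure}).

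The core step is then to transfer the universal property of $\cl$ from the category of normal maps over $G$ (Definition \ref{def normal closure}) to the category of central $\gvp$-extensions (Definition \ref{def f-central}(2)). Given a central $\gvp$-extension $\gC\xrightarrow{\psi} M\xrightarrow{n} G$, Remark \ref{rem surj} endows $n$ with a unique normal-map structure, so $(M,n,\psi)$ is a normal-map factorization of $\gvp$. The universal property of the free normal closure then supplies a unique $G$-equivariant group homomorphism $\W\psi\colon\cl\to M$ with $c_\gvp\circ\W\psi=\psi$ and $\W\psi\circ n=\W\gvp$. Restricting to kernels yields the map $\kappa\colon\ker\W\gvp\to\ker n$ of diagram \eqref{diag map f-central}, so $\W\psi$ is precisely a morphism of central $\gvp$-extensions inducing the identity on $G$.

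The main (and rather mild) obstacle is reconciling the two notions of ``universal morphism'' arising from Definitions \ref{def normal closure} and \ref{def f-central}(2); this dissolves via Remark \ref{rem surj}, which shows that a central extension of $G$ admits a unique normal-map structure, so the two morphism categories coincide on central extensions of $G$. Once this bridge is in place, $(\cl, c_\gvp)$ satisfies the universal property characterizing $(U,\eta)$ in Theorem \ref{thm f-central}, so by uniqueness of universal central $\gvp$-extensions we obtain a canonical isomorphism $(\cl, c_\gvp)\cong(U,\eta)$ of central $\gvp$-extensions; in particular $\ker\W\gvp\cong H_2(G,\gC;\zz)$, as required.
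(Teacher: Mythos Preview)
Your proposal is correct and follows essentially the same route as the paper's own proof: verify the hypothesis of Theorem~\ref{thm f-central} via $\gvp_{ab}$ surjective, check that $(\cl,c_\gvp)$ is a central $\gvp$-extension, and then identify the two universal properties by observing that central $\gvp$-extensions and normal-map factorizations of $\gvp$ coincide under the hypothesis. Your explicit appeal to Remark~\ref{rem surj} for the uniqueness of the normal structure (and hence the automatic $G$-equivariance of morphisms over $G$) is exactly the point the paper sketches at the opening of \S\ref{sub f-central}; you have simply made the bridge more explicit than the paper does.
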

\begin{proof}
The first thing to notice is that $\gvp_{ab}\colon\gC_{ab}\to G_{ab}$ is surjective since $G=\lan\gvp(\gC)^G\ran$. 
Further, we already noted (see Lemma \ref{lem gen of WgC}(2)) that if $G=\lan\gvp(\gC)^G\ran,$ then $\cl$
is a central extension of $G$.  Also, as noted in the beginning of
this section, the fact that $(M,\psi)$ is a central $\gvp$-extension 
of $G$ is equivalent to a factorization
$\gC\xrightarrow{\psi} M\xrightarrow{n} G$ of $\gvp$ with $n$ a 
normal map.  Thus the universal property that defines $\cl$
is precisely the universal property that defines the universal
central $\gvp$-extension (of Theorem \ref{thm f-central}).  Hence these are isomorphic, and the remaining
part of the theorem follows from Theorem \ref{thm f-central}.
\end{proof}

\begin{remark}
The  construction in \cite{FS3} of the universal central $f$-extension $(U,\eta)$
of Theorem \ref{thm f-central} 
extends the   Schur  universal
central extension of a perfect group $G$ (the case $\gC=1$), to any map between groups
(not necessarily perfect groups) $f\colon \gC\to G,$ inducing surjection on abelianizations.
\end{remark}

\section{Further examples of the free normal closure}\label{sec eg nc}
\noindent
Throughout this section $\gvp\colon\gC\to G$ is a map of groups and 
$\gC\overset{c_{\gvp}}\to\cl\overset{\W\gvp}\to G$ is
its universal normal closure.  Further, throughout this section,
$\gC\overset{\psi}\to M\overset{n}\to G$ is a factorization
of $\gvp$ (i.e.~$\psi\circ n=\gvp$) with $n$ a normal map,
as in diagram \eqref{diag fnc}, and $\W\psi\colon\cl\to M$ is the unique normal
morphism as in diagram \eqref{diag fnc}.

As we saw in Corollary \ref{cor eg},
if $\gvp$ is surjective, then there is a normal isomorphism $\cl\to\gC/[\gC,\ker\gvp]$.
We also saw that if $\gC$ and $G$ are finite, then $\cl$ is finite.
We consider here some more examples, some of which appear in the litrature.
We give details since we need those as a starting point for some of the results above.
A  lemma for the case where $\gvp$
is injective and $\gvp(\gC)$ is normal in $G$
is given in Appendix \ref{app B}.

\begin{example}\label{decending central series}
Suppose $G=1$.  Let $\gvp_1:=\gvp$.
Then, by Corollary \ref{cor eg}(2), the free normal closure of $\gvp$
is the factorization $\gC\overset{c_{\gvp_1}}\to \gC/[\gC,\gC]\to 1$.  Since $\gvp_2:=c_{\gvp_1}$
is the canonical map it is
a surjective map whose kernel is $[\gC,\gC]$, Corollary \ref{cor eg}(2) applies again
and shows that the free normal closer
of $\gvp_2$ is the factorization $\gC\overset{\gvp_3}\lr \gC/\gc_3(\gC)\overset{\W{\gvp_2}}\lr \gC/\gc_2(\gC)$.
Proceeding
in this way we see that the normal closures tower of diagram 
\eqref{eq tower of fnc}
are the quotients $\gC_i=\gC/\gc_i(\gC),$ $i\ge 1,$
where  $\gc_i(\gC)$ are the members of its descending central series
$\gC=\gc_1(\gC)\ge\gc_2(\gC)\ge\dots$.
\end{example}

The following example is well known.

\begin{example}\label{eg inj abelian}
Assume that $\gC$ and $G$ are abelian groups.
Let $\gvp\colon \gC\to  G$ be a homomorphism.  We claim that
\begin{equation}\label{eq gvp inj to abelian}
\cl=\bigoplus_{x\in G/\gvp(\gC)}\gC_x\qquad\text{and}\qquad c_{\gvp}(\gc)=\gc_{\gvp(\gC)},\quad \forall \gc\in \gC.
\end{equation}
where $\gC_x=\{\gc_x\mid \gc\in\gC\}\cong \gC,$ for  $x\in G/\gvp(\gC)$.
The action of $G$
on $\cl$ is given by $\gc_x^g=\gc_{xg},$ for all $x\in G/\gvp(\gC)$ and  $g\in G$.
The map $\W{\gvp}$ takes $\gc_{\gvp(\gc)},$  to $\gc,$ for all $\gc\in\gC$
and takes $\gC_x$ to $0,$ for all $x\ne\gvp(\gC)$. 

To see that equation \eqref{eq gvp inj to abelian} is correct,
note the relation  \eqref{eq Rnm2}, gives $(\W{\gd}_g)^{^{-1}}\W{\gc}_g\W{\gd}_g=\gc_{g\gvp(\gd)}$,
for all $\W{\gd}_g, \W{\gc}_g\in\W{\gC}_g$   (taking $h=g$, see the notation in equation \eqref{not widehatgC}),
because $\gC$ is abelian. So since $\W{\gC}_g$ is abelain we see that $\W{\gc}_g=\W{\gc}_{g\gvp(\gd)}$.
But now using relation \eqref{eq Rnm2} again we see that
\[
(\W{\gd}_h)^{^{-1}}\W{\gc}_g\W{\gd}_h=\W{\gc}_{g\gvp(\gd)}=\W{\gc}_g,
\]
for all $\W{\gd}_h\in\W{\gC}_h$ and  $\W{\gc}_g\in\W{\gC}_g,$ where $g, h\in G,$
again because $G$ is abelian.  Since $\cl$ is generated by $\{\W{\gC}_g\mid g\in G\}$
we see that $\cl$ is abelian.

Next, if $\gC\xrightarrow{\psi} M\xrightarrow{n} G$
is a factorization of $\gvp,$ with $n$ a normal map,
then $M$ is a central extension of $\gvp(\gC),$ so
$M$ is nilpotent of class at most $2$.  Since the normal
closure of $\psi(\gC)$ in $M$ is contained in $\psi(\gC)Z(M),$
and since $\psi(\gC)$ is abelian, we see that $\lan\psi(\gC)^M\ran$
is abelian. Further,
$\psi(\gC)^{(\gc)\gvp}=\psi(\gC)^{(\gc)\psi n}=\psi(\gC)^{(\gc)\psi}=\psi(\gC),$
for all $\gc\in\gC,$ so we see that $\psi(\gC)^h=\psi(\gC)^g,$
for all $x\in G/\gvp(\gC)$ and all $g, h\in x$. 

Thus the map from $\W{\psi}\colon \cl\to M$ taking $\gc_x$ to $\psi(\gc)^h,$ where $\gc\in\gC, x\in G/\gvp(\gC)$
and $h\in x$ is a well defined map making diagram \eqref{diag fnc} commutative.
It is now routine to check that $\bigoplus_{x\in G/\gvp(\gC)}\gC_x$
is the free normal closure of $\gvp$.  
\end{example}

\begin{example}
There are many examples where $\gvp$ is injective, but
$\cl$ is {\it not} the normal closure of $\gvp(\gC)$.
Here is one.

Let $G=A_5={\rm PSL}_2(5),$ and
Let $\gC\le G$ be a subgroup of order $3$ (thus $\gvp$ is the inclusion map).
Then the normal closure of $\gC$ in $G$ is $G$ (since
$G$ is simple).  Now let $M={\rm SL}_2(5)$ be the universal perfect central extension of $G$.
Let $n\colon M\to G$
be the natural surjection, and let $\psi\colon \gC\to M,$
such that $\psi(\gC)$ is a subgroup of order $3$ in $M$
and such that $\psi\circ n=inc$ is the inclusion map $inc\colon\gC\to G$.

Consider diagram \eqref{diag fnc}.  Since the only homomorphism $A_5\to {\rm SL}_2(5)$ is the
trivial map, if $\cl=G,$ then $\W\psi$ would be the trivial map.
But $c_{\gvp}\circ\W\psi= \psi$ which means that $\psi$ is the trivial map,
a contradiction. 

It is interesting to note that $\cl$ in this case is ${\rm SL}_2(5)\times \zz_3,$
and $c_{\gvp}(\gC)$ is not contained in ${\rm SL}_2(5)$.  Thus the center
of $\cl$ is $\zz_2\times\zz_3=\zz_6$.
\end{example}

\begin{example}
Assume that $\gC$ and $G$ are  finite groups.
Let $C$ be the normal closure of $\gvp(\gC)$ in $G$.
Assume  that $\gC$ and  $C$ are $\pi$-groups.  
Recall that a $\pi$-group, for a set of primes $\pi,$
is a group $H$ such that any prime $p$ dividing
the order of $H$ is in $\pi$.
 
We claim that $\cl$
is also a finite $\pi$-group, further, if $C$ is solvable (nilpotent), so is $\cl$.
If $C$ and $\gC$ are $p$-groups so is $\cl$.
  
By Corollary \ref{cor eg}(2),
$\cl$ is finite.
By Lemma \ref{lem widehatgC}(3),
$\ker\W\gvp\le Z(\cl),$ so by Lemma \ref{lem gen of WgC}(2), since the
normal closure of $\gvp(\gC)$ in $G$ is a $\pi$-group, we see that
$\cl/\ker\W\gvp$ is a $\pi$-group.  Let $\pi'$ be the complement to $\pi$ in the set
of all primes.  If $O_{\pi'}(Z(\cl))\ne 1,$ then,
by the Schur-Zassenhaus Theorem (\cite[(18.1), p.~70]{A}),
we would get $\cl=O_{\pi}(\cl)\times O_{\pi'}(\cl)$.
But then $c_{\gvp}(\gC)\le O_{\pi}(\cl)$, since $c_{\gvp}(\gC)$ is a $\pi$-subgroup of $\cl$.
Hence also $\cl=\lan c_{\gvp}(\gC)^g\mid g\in G\ran\le O_{\pi}(\cl)$.
But since $\cl$ is generated by $\{c_{\gvp}(\gC)^g\mid g\in G\},$ we see that
$O_{\pi'}(\cl)=1$.
\end{example}

\section{The injective normalizer of a map}\label{sect inj nor}

\begin{Def}\label{def injective normalizer}
Let $\gvp\colon \gC\to G$ be a map of groups.
The {\it injective normalizer} of $\gvp$ is a factorization of the latter via
\[
\gC\overset{\Wt\gvp}\lr N(\gvp)\overset{p_{\gvp}}\lr G
\] 
such that $\Wt\gvp\colon\gC\to N(\gvp)$ is a normal map, and such that for any other factorization
$\gvp=n\circ f$
via a
normal map $n\colon \gC\to H$ 
with $f\colon H\to G,$
there exists a unique  normal morphism  $\Wt{f}\colon H\to N(\gvp)$ rendering
the diagram below commutative.

\begin{equation}\label{diag in}
\xymatrix{
\gC\ar[rr]^{\gvp}\ar[rd]^{n}\ar@/_2ex/[rdd]_{\Wt\gvp} && G\\
&H\ar[ru]^{f}\ar[d]_{\exists!\Wt{f}}\\
&N(\gvp)\ar[ruu]_{p_{\gvp}}\\
}
\end{equation}
\end{Def}
In subsection \ref{sect ex inj nor} below we will show that the injective normalizer exists.  We have

\begin{lemma}\label{uniqueness of in}
The injective normalizer of $\gvp\colon\gC\to G$ is unique
up to a normal isomorphism.
\end{lemma}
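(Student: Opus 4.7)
The plan is to mimic the proof of Lemma~\ref{uniqueness of nc}: a standard universal-property argument showing that any two objects satisfying the same universal property are canonically isomorphic. Suppose $(N_1,\eta_1,q_1)$ and $(N_2,\eta_2,q_2)$ are two injective normalizers of $\gvp$, so that each $\eta_i\colon\gC\to N_i$ is a normal map, each $q_i\colon N_i\to G$ satisfies $\eta_i\circ q_i=\gvp$, and each triple enjoys the universal property of Definition~\ref{def injective normalizer}.

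First I would feed the factorization $\gC\overset{\eta_2}\lr N_2\overset{q_2}\lr G$ into the universal property of $N_1$ (taking $H=N_2$, $n=\eta_2$, $f=q_2$ in diagram~\eqref{diag in}). Since $\eta_2$ is a normal map, this yields a unique normal morphism $\gth\colon N_2\to N_1$ satisfying $\eta_2\circ\gth=\eta_1$ and $\gth\circ q_1=q_2$. Symmetrically, applying the universal property of $N_2$ to the factorization $\gC\overset{\eta_1}\lr N_1\overset{q_1}\lr G$ produces a unique normal morphism $\gth'\colon N_1\to N_2$ with $\eta_1\circ\gth'=\eta_2$ and $\gth'\circ q_2=q_1$.

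Then I would verify $\gth\circ\gth'=\mathrm{id}_{N_2}$ and $\gth'\circ\gth=\mathrm{id}_{N_1}$. For the first, note that both $\mathrm{id}_{N_2}$ and $\gth\circ\gth'$ are normal morphisms $N_2\to N_2$ that fit into diagram~\eqref{diag in} applied to the factorization $\gC\overset{\eta_2}\lr N_2\overset{q_2}\lr G$: one checks $\eta_2\circ(\gth\circ\gth')=\eta_1\circ\gth'=\eta_2$ and $(\gth\circ\gth')\circ q_2=\gth\circ q_1=q_2$. The uniqueness clause of the universal property then forces $\gth\circ\gth'=\mathrm{id}_{N_2}$, and the symmetric argument gives $\gth'\circ\gth=\mathrm{id}_{N_1}$. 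Hence $\gth$ is a normal isomorphism commuting with the structure maps both to $G$ and from $\gC$.

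There is no serious obstacle; this is the routine ``unique-up-to-unique-isomorphism'' consequence of any universal property, identical in form to the proof of Lemma~\ref{uniqueness of nc}. The only minor bookkeeping is to keep track simultaneously of the compatibility of $\gth$ with the maps to $G$ and with the normal map from $\gC$, but both are automatic from the two commutativity conditions built into diagram~\eqref{diag in}.
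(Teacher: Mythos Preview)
Your proof is correct and is precisely the standard universal-property argument that the paper abbreviates as ``Straightforward from the universal properties.'' There is nothing to add; your write-up simply makes explicit what the paper leaves to the reader.
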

\begin{proof}
Straightforward from the universal properties.
\end{proof}

\setcounter{subsection}{2}
\subsection{The construction of the injective normalizer}\label{sect ex inj nor}\hfill
\medskip

\noindent
Throughout this subsection let $\gvp\colon\gC\to G$ be a map of groups.

\setcounter{prop}{3}
\begin{Def}
Let $\gt\in\Aut(\gC)$ and $g\in G$.  We say that
$\gt$ and $g$ are {\it compatible} if the following
diagram is commutative:
\[
\xymatrix{
\gC\ar[r]^{\gvp}\ar[d]_{\gt}& G\ar[d]^{c_g}\\
\gC\ar[r]^{\gvp} & G
}
\]
where $c_g$ is conjugation by $G$.  Thus
\begin{equation}\label{eq compatible}
\gvp(\gc)^g=\gvp(\gt(\gc)),\qquad\forall\gc\in\gC.
\end{equation}
\end{Def}

\begin{Def}\label{def inj nor}
The {\it injective normalizer} of $\gvp$ is the subgroup of $\Aut(\gC)\times N_G(\gvp(\gC))$ given by
\[
N(\gvp):=\{(\gt,g)\mid (\gt,g)\text{ is a compatible pair}\}.
\]
\end{Def}

The next lemma shows that $N(\gvp)$ is indeed a subgroup of $\Aut(\gC)\times N_G(\gvp(\gC))$.
 
\begin{lemma}\label{lem Ngvp is a gp}
\begin{enumerate}
\item
$(id, 1)$ is a compatible pair;

\item
if $(\gt, g)$ is a compatible pair, then $(\gt^{-1},g^{-1})$ 
is a compatible pair.

\item 
the product of two compatible pairs is compatible;
\end{enumerate}
\end{lemma}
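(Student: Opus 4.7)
The plan is to verify each of the three closure properties directly from the defining equation $\gvp(\gc)^g = \gvp(\gt(\gc))$ for all $\gc\in\gC$. None of the three parts is conceptually deep; the content is bookkeeping with the conjugation action and the composition law in $\Aut(\gC)$. I would begin by noting that if $(\gt,g)$ is compatible, then
\[
\gvp(\gC)^g = \{\gvp(\gt(\gc))\mid \gc\in\gC\} = \gvp(\gt(\gC)) = \gvp(\gC),
\]
so the condition $g\in N_G(\gvp(\gC))$ in Definition \ref{def inj nor} is automatic once compatibility is assumed, and I do not need to verify it separately in each part.

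For (1), I would simply plug in $\gt = \mathrm{id}_\gC$ and $g = 1$: both sides of the compatibility equation reduce to $\gvp(\gc)$. For (2), starting from $\gvp(\gc)^g = \gvp(\gt(\gc))$, I would substitute $\gc$ by $\gt^{-1}(\gc)$ (legitimate since $\gt$ is bijective) to get $\gvp(\gt^{-1}(\gc))^g = \gvp(\gc)$, then conjugate by $g^{-1}$ to obtain $\gvp(\gt^{-1}(\gc)) = \gvp(\gc)^{g^{-1}}$, which is exactly the compatibility condition for the pair $(\gt^{-1}, g^{-1})$.

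For (3), assume $(\gt_1,g_1)$ and $(\gt_2,g_2)$ are compatible. I would chain the two compatibility equations:
\[
\gvp(\gc)^{g_1 g_2} = \bigl(\gvp(\gc)^{g_1}\bigr)^{g_2} = \gvp(\gt_1(\gc))^{g_2} = \gvp\bigl(\gt_2(\gt_1(\gc))\bigr) = \gvp\bigl((\gt_1\gt_2)(\gc)\bigr),
\]
where the last equality uses the paper's right-action convention (so that $\gt_1\gt_2 \in \Aut(\gC)$ means ``apply $\gt_1$ first, then $\gt_2$,'' i.e.\ $(\gt_1\gt_2)(\gc) = \gt_2(\gt_1(\gc))$).

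The only mild subtlety, and the thing I would want to nail down explicitly before writing out the formal proof, is the composition convention in $\Aut(\gC)$: the paper declares that maps are applied on the right, yet the compatibility equation is written with the more traditional function notation $\gvp(\gt(\gc))$. Once one fixes the convention so that the composition in $\Aut(\gC)$ matches the order in which the factors $g_1,g_2$ conjugate (i.e.\ inner factor first), the displayed chain above closes the proof of (3). This notational alignment is really the only ``obstacle,'' and it is entirely cosmetic; the substantive content of the lemma is exhausted by the one-line calculations indicated.
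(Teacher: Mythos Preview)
Your proposal is correct and follows essentially the same route as the paper: plug in $(\mathrm{id},1)$ for part (1); substitute $\gt^{-1}(\gc)$ and conjugate by $g^{-1}$ for part (2); and chain the two compatibility equations for part (3), exactly as the paper does. Your extra remark that $g\in N_G(\gvp(\gC))$ follows automatically from compatibility is a small bonus (the paper records this separately in Lemma~\ref{lem useful}(1)), and your care with the right-action convention in $\Aut(\gC)$ is well placed.
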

\begin{proof}
(1):\quad
We take in equation \eqref{eq compatible} $\gt=id$ and $g=1$ and we get
\[
\gvp(\gc)^1=\gvp(id(\gc)),\qquad\forall\gc\in\gC.
\]
Hence (1) holds.
\medskip

\noindent
(2):\quad
Since $(\gt, g)$ is compatible we have
\[
\gvp(\gc)^g=\gvp(\gt(\gc)),\qquad\forall\gc\in\gC.
\]
Now replace each $\gc$ with $\gt^{-1}(\gc)$ to get
\[
\gvp(\gt^{-1}(\gc))^g=\gvp(\gc),\qquad\forall\gc\in\gC,
\]
and now conjugate both sides with $g^{-1}$ to get
\[
\gvp(\gc)^{g^{-1}}=\gvp(\gt^{-1}(\gc)),\qquad\forall\gc\in\gC,
\]
This shows (2).
\medskip

\noindent
(3):\quad
We have
\[
\gvp(\gc)^v=\gvp(\gr(\gc)),\qquad\forall\gc\in\gC,
\]
for $(\gr, v)=(\gt, g)$ and $(\gs, h)$.
Thus
\[
(\gvp(\gc))^{gh}=\gvp((\gc)\gt)^h=\gvp((\gc)\gt\gs),\qquad,\forall\gc\in\gC.
\]
Therefore $(\gt\gs, gh)$ is a compatible pair.
\end{proof}

\begin{remarks}\label{rem triv. map}
\begin{enumerate}
\item
If $\gvp$ is the trivial map,
then any pair $(\gt, g)\in \Aut(\gC)\times G$ is a compatible pair,
so $N(\gvp)=\Aut(\gC)\times G$.
\item
If $\gvp$ is inclusion, then $N(\gvp)=\{\gvp c_g\gvp^{-1}, g)\mid g\in N_G(\gC)\},$ where $c_g$
is the inner automorphism of $G$ induced by $g$.  Hence $N(\gvp)\cong N_G(\gC)$.

\item
Of course $p_{\gvp}(N(\gvp))$ does not always equal to $N_G(\gvp(\gC))$.
For example, let $\gC$ be an extra-special group of order $2^{2n+1}, n\ge 2$ which is a central product
of n dihedral groups of order $8$, and let
$G$ be a semi-direct product $E\rtimes L,$ where $E$ is an elementary abelian group of order $2^n,$
and $L\cong {\rm GL}(n,2)$.  Note that $\Aut(\gC)$ is an extension
of an elementary abelian group of order $2^n$ by the orthogonal group $O_+(2n,2)$.
Let  $\gvp\colon\gC\to G,$ be the natural map taking
$\gC$ onto $E$.
Then not every $g\in L$ lifts to an automorphism of $\gC,$ so $p_{\gvp}(N(\gvp))\ne G=N_G(\gvp(\gC))$. 
\end{enumerate}
\end{remarks}

\begin{lemma}\label{lem tildegvp and p}
Let
\[
\Wt\gvp\colon\gC\to N(\gvp):\quad \gc\mapsto (c_{\gc},\gvp(\gc)),\quad\forall\gc\in\gC.
\]
Let $N(\gvp)$ act on $\gC$ via $\gc^{(\gt, g)}=(\gc)\gt,$ for all $\gc\in\gC$.
Let
\[
p_{\gvp}\colon N(\gvp)\to G:\quad (\gt, g)\mapsto g,
\]
be the projection on the second coordinate.  Then
$\Wt\gvp$ is a normal map, and $\Wt\gvp\circ p_{\gvp}=\gvp$.  
\end{lemma}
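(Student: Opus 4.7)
The proof is a direct verification, so the plan is to check the necessary items in sequence; the only point that requires a small observation is how the compatibility condition (equation \eqref{eq compatible}) is precisely tailored to make axiom (NM1) hold.

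First, I would verify that $\Wt\gvp$ is well defined, i.e.\ that $(c_\gc,\gvp(\gc))\in N(\gvp)$ for every $\gc\in\gC$. This amounts to checking compatibility:
\[
\gvp(\gd)^{\gvp(\gc)} \;=\; \gvp(\gc)^{-1}\gvp(\gd)\gvp(\gc) \;=\; \gvp(\gc^{-1}\gd\gc) \;=\; \gvp\bigl((\gd)c_\gc\bigr),
\]
which holds since $\gvp$ is a homomorphism. That $\Wt\gvp$ is a group homomorphism then follows from $c_{\gc\gd}=c_\gc c_\gd$ (an elementary computation with right-actions) together with $\gvp(\gc\gd)=\gvp(\gc)\gvp(\gd)$. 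The identity $\Wt\gvp\circ p_\gvp=\gvp$ is immediate from the definitions: $(\gc)\Wt\gvp\, p_\gvp=(c_\gc,\gvp(\gc))\,p_\gvp=\gvp(\gc)$.

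Next I would verify that the formula $\gc^{(\gt,g)}=(\gc)\gt$ defines a (right) action of $N(\gvp)$ on $\gC$. This is just the statement that the projection $N(\gvp)\to\Aut(\gC)$, $(\gt,g)\mapsto\gt$, is a group homomorphism, which is built into the product structure on $\Aut(\gC)\times N_G(\gvp(\gC))$ recorded in Lemma~\ref{lem Ngvp is a gp}.

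It remains to check axioms (NM1) and (NM2), now taking the ambient ``$G$'' in Definition~\ref{def normal map} to be $N(\gvp)$. For (NM2), let $\gc,\gd\in\gC$; since $(\gd)\Wt\gvp=(c_\gd,\gvp(\gd))$ acts on $\gc$ as $(\gc)c_\gd=\gd^{-1}\gc\gd=\gc^\gd$, we obtain $\gc^{(\gd)\Wt\gvp}=\gc^\gd$, as required. For (NM1), fix $\gc\in\gC$ and $(\gt,g)\in N(\gvp)$ and compute both sides. The left-hand side is
\[
\bigl(\gc^{(\gt,g)}\bigr)\Wt\gvp \;=\; \bigl(c_{(\gc)\gt},\;\gvp((\gc)\gt)\bigr),
\]
while the right-hand side is
\[
\bigl((\gc)\Wt\gvp\bigr)^{(\gt,g)} \;=\; (\gt,g)^{-1}(c_\gc,\gvp(\gc))(\gt,g) \;=\; \bigl(\gt^{-1}c_\gc\gt,\;g^{-1}\gvp(\gc)g\bigr).
\]
A one-line calculation gives $\gt^{-1}c_\gc\gt=c_{(\gc)\gt}$ in $\Aut(\gC)$, and the equality of the second coordinates, $g^{-1}\gvp(\gc)g=\gvp((\gc)\gt)$, is exactly the compatibility condition \eqref{eq compatible} for $(\gt,g)$. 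Thus both components agree and (NM1) holds.

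The only substantive point---and the step I would flag as the conceptual heart---is recognizing that in the normal map structure of $\Wt\gvp$ the acting group is $N(\gvp)$ itself rather than $G$, and that the very definition of $N(\gvp)$ as the pairs satisfying \eqref{eq compatible} is what forces (NM1). Everything else is formal.
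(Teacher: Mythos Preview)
Your proof is correct and follows essentially the same route as the paper: a direct verification of (NM1) and (NM2), with the compatibility condition \eqref{eq compatible} supplying exactly the equality of second coordinates in (NM1). You are in fact slightly more thorough than the paper, which omits the explicit checks that $(c_\gc,\gvp(\gc))$ is a compatible pair and that the formula $\gc^{(\gt,g)}=(\gc)\gt$ defines a group action.
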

\begin{proof}
First note that $\Wt\gvp$ is a group map from $\gC$ to $N(\gvp)$.
We check (NM1).  For all $\gc\in\gC$ and $(\gt,g)\in N(\gvp),$ we have
\[
(\gc^{(\gt, g)})\Wt\gvp=(\gc)\gt\Wt\gvp=(c_{\gc\gt},(\gc)\gt\gvp).
\]
while
\[
((\gc)\Wt\gvp)^{(\gt,g)}=(c_{\gc}, (\gc)\gvp)^{(\gt,g)}=(c_{\gc}^{\gt},((\gc)\gvp)^g)=(c_{\gc\gt},(\gc)\gt\gvp),
\]
where the last equality follows from the fact that $(\gt, g)$ is a compatible pair.
This shows (NM1).

We now check (NM2):
\[
(\gc)^{(\gd)\Wt\gvp}=(\gc)^{(c_{\gd},\gvp(\gd))}=(\gc)c_{\gd}=\gc^{\gd},\quad\forall\gc, \gd\in\gC.
\]
This shows (NM2) and that $\Wt\gvp$ is a normal map.
Clearly $\Wt\gvp\circ p_{\gvp}=\gvp$.
\end{proof}
 
\begin{prop}\label{prop N(gvp)}
Let $\gC\overset{n}\to H\overset{f}\to G$ be a decomposition
of $\gvp,$ with $n$ a normal map.  Then there exists a
unique map $\Wt{f}\colon H\to N(\gvp)$  rendering diagram \eqref{diag in} commutative.
\end{prop}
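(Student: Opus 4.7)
The plan is to build $\Wt f$ explicitly from the normal structure $\ell\colon H\to\Aut(\gC)$ that comes packaged with the normal map $n\colon\gC\to H$. Specifically, I would set
\[
\Wt f(h):=(\ell(h),f(h))\in\Aut(\gC)\times G,
\]
and first show that $\Wt f(h)$ always lands in $N(\gvp)$, i.e.\ that $(\ell(h),f(h))$ is a compatible pair in the sense of equation \eqref{eq compatible}. Writing $\ell(h)(\gc)=\gc^h$ and combining axiom (NM1) for $n$ with the factorization $\gvp=n\circ f$, one computes
\[
\gvp(\gc^h)=f\bigl(n(\gc^h)\bigr)=f\bigl(n(\gc)^h\bigr)=f(n(\gc))^{f(h)}=\gvp(\gc)^{f(h)},
\]
which is exactly compatibility of $(\ell(h),f(h))$. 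Since $\ell$ and $f$ are both group homomorphisms, so is $\Wt f$.

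Next I would verify that $\Wt f$ renders diagram \eqref{diag in} commutative. Commutativity of the lower triangle $\Wt f\circ p_{\gvp}=f$ is immediate from the definition of $p_{\gvp}$ as projection onto the second coordinate. For the upper triangle $n\circ\Wt f=\Wt\gvp$, evaluating at $\gc\in\gC$ gives
\[
\Wt f(n(\gc))=\bigl(\ell(n(\gc)),\,\gvp(\gc)\bigr),
\]
and axiom (NM2) for $n$ reads $\gd^{n(\gc)}=\gd^{\gc}$ for all $\gd\in\gC$, i.e.\ $\ell(n(\gc))=c_{\gc}$, so this pair equals $(c_\gc,\gvp(\gc))=\Wt\gvp(\gc)$. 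To confirm that $\Wt f$ is a normal morphism in the sense of Definition \ref{def morphism of cm} (with $\mu=\mathrm{id}_\gC$), one notes that by Lemma \ref{lem tildegvp and p} the $N(\gvp)$-action on $\gC$ is by the first coordinate, so $\gc^{\Wt f(h)}=\ell(h)(\gc)=\gc^h$, which is precisely the intertwining identity $\mu(\gc^h)=\mu(\gc)^{\Wt f(h)}$.

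For uniqueness, suppose $\Wt f'\colon H\to N(\gvp)$ is any normal morphism rendering \eqref{diag in} commutative, and write $\Wt f'(h)=(\tau_h,g_h)$. Then the condition $\Wt f'\circ p_\gvp=f$ forces $g_h=f(h)$, while the normal-morphism intertwining condition $\gc^h=\gc^{\Wt f'(h)}=\tau_h(\gc)$ forces $\tau_h=\ell(h)$; hence $\Wt f'=\Wt f$. I do not anticipate any serious obstacle: the argument is a direct verification, and the only point requiring any care is tracking the right-action conventions, so that (NM1) produces compatibility and (NM2) produces the equality $\ell(n(\gc))=c_\gc$ needed for the upper triangle.
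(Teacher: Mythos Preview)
Your proof is correct and follows essentially the same route as the paper: define $\Wt f(h)=(\ell(h),f(h))$, use (NM1) together with $\gvp=n\circ f$ to verify compatibility, use (NM2) to identify $\ell(n(\gc))=c_\gc$ for the upper triangle, and deduce uniqueness from the commutativity and normal-morphism constraints pinning down both coordinates. The only differences from the paper's argument are cosmetic (notation and the order in which the two triangles are checked).
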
 
\begin{proof}
For each $h\in H$ we let $\ell_h\in\Aut(\gC)$ be
the automorphism that comes from the normal structure
of $n$. As usual, we write $\ell_h(\gc)=\gc^h,$ for all
$\gc\in\gC$ and $h\in H$.  Let 
\[
\Wt{f}\colon H\to N(\gvp):\quad h\mapsto (\ell_h, f(h)).
\]
We first claim that $(\ell_h, f(h)$ is a compatible pair.
By equation \eqref{eq compatible} we must check that
\[
\gvp(\gc)^{f(h)}=\gvp(\ell_h(\gc)),\qquad\forall\gc\in\gC.
\]
Now
\[
\gvp(\gc^h)=f(n(\gc^h))=f(n(\gc)^h)=(f(n(\gc)))^{f(h)}=\gvp(\gc)^{f(h)},
\]
for all $h\in H,$ thus $\Wt{f}$ is indeed a map from $H$ to $N(\gvp)$. 
Clearly $\Wt{f}$ is a group map.  Also
\[
(\gc)n\circ \Wt{f}=\Wt{f}(n(\gc))=(\ell_{n(\gc)}, f(n(\gc)))\overset{(*)}=(c_{\gc},\gvp(\gc))=\Wt\gvp(\gc),
\] 
for all $\gc\in\gC,$ where the equality $(*)$ follows from the fact that $n$ is a 
normal map.

Next
\[
(h)\Wt{f}\circ p_{\gvp}=(\ell_h, f(h))p_{\gvp}=f(h).
\]
Thus $\Wt{f}$ renders diagram \eqref{diag in} commutative.

Finally, we must show that
\[
\gc^h=\gc^{\Wt{f}(h)},\qquad\text{ for all }\gc\in\gC\text{ and }h\in H.
\]
However, by definition 
\[
\gc^{\Wt{f}(h)}=\gc^{(\ell_h, f(h))}=(\gc)\ell_h=\gc^h.
\]

It remains to show that $\Wt{f}$ is unique.  So let 
\[
\widehat f\colon H\to N(\gvp),
\]
rendering diagram \eqref{diag in} commutative.  Let $h\in H$ and let
$\widehat{f}(h)=(\gt, g)$.  Then $f(h)=p_{\gvp}(\widehat{f}(h))=g,$ so $g=f(h)$.
Also, since $\widehat{f}$ is a normal morphism,  
\[
\gc^h=\gc^{\widehat{f}(h)}=(\gc)\gt,
\]
so $\gt=\ell_h,$ and so $\Wt{f}$ is unique and the 
proof of the proposition is complete.
\end{proof}

\begin{remark}
Let $\psi\colon G\to G',$ and consider the diagram
\[
\xymatrix{
\gC\ar[rr]^{\gvp}\ar[rd]^{\Wt\gvp}\ar@/_3ex/[rdd]_{\Wt{\gvp\circ\psi}} && G\ar[r]^{\psi} &G'\\
&N(\gvp)\ar[ru]^{p}\ar[d]_{\Wt{p\circ \psi}}\\
&N(\gvp\circ\psi)\ar[rruu]_{\pi}\\
}
\]
Thus we see that $\psi$ induces a map $\Wt{p\circ\psi}\colon N(\gvp)\to N(\gvp\circ \psi)$.
\end{remark}

Our next lemma shows that the injective normalizer of $\gvp$ precisely  detects
whether $\gvp$ is a normal map.

\begin{lemma}\label{lem nmap}
The map $\gvp\colon\gC\to G$ is a normal map iff the map
$p_{\gvp}\colon N(\gvp)\to G$ has a section $s\colon G\to N(\gvp)$
such that $\gvp\circ s=\Wt{\gvp}$.
\end{lemma}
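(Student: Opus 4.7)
The plan is a direct two-way verification in which the chosen section $s$ and the normal structure on $\gvp$ are essentially the same data. First I would fix the notational dictionary: an element of $N(\gvp)\subseteq\Aut(\gC)\times N_G(\gvp(\gC))$ is a compatible pair $(\gt,g)$, the action of $N(\gvp)$ on $\gC$ is $\gc^{(\gt,g)}=(\gc)\gt$, and (by Lemma \ref{lem tildegvp and p}) $\Wt\gvp(\gc)=(c_\gc,\gvp(\gc))$ and $p_\gvp(\gt,g)=g$. In particular, a group-theoretic section $s$ of $p_\gvp$ is exactly the datum of a homomorphism $\ell\colon G\to\Aut(\gC)$, $g\mapsto\gt_g$, such that each pair $(\gt_g,g)$ is compatible in the sense of \eqref{eq compatible}; in that case $s(g)=(\gt_g,g)$.

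For the $(\Rightarrow)$ direction, suppose $\gvp$ is a normal map with structure $\ell\colon G\to\Aut(\gC)$. I would define $s(g):=(\ell(g),g)$. Axiom (NM1) says precisely $\gvp(\gc^g)=\gvp(\gc)^g$, which is exactly the compatibility condition \eqref{eq compatible} for $(\ell(g),g)$, so $s(g)\in N(\gvp)$; since $\ell$ is a homomorphism so is $s$, and $p_\gvp\circ s=\mathrm{id}_G$ is immediate. The crucial step is checking $\gvp\circ s=\Wt\gvp$: evaluating at $\gc\in\gC$ gives $s(\gvp(\gc))=(\ell(\gvp(\gc)),\gvp(\gc))$, which equals $(c_\gc,\gvp(\gc))=\Wt\gvp(\gc)$ if and only if $\ell(\gvp(\gc))=c_\gc$, and this is exactly axiom (NM2).

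For the $(\Leftarrow)$ direction, suppose $s$ is such a section. Write $s(g)=(\gt_g,g)$ and define $\ell\colon G\to\Aut(\gC)$ by $\ell(g):=\gt_g$; this is a homomorphism because $s$ is. I would then verify the two normal-map axioms with respect to this action. Axiom (NM1), $\gvp(\gc^g)=\gvp(\gc)^g$, is just compatibility of the pair $(\gt_g,g)\in N(\gvp)$. For axiom (NM2), $\gc^{\gvp(\gd)}=\gc^{\gd}$, I would again compare the two sides of the equality $\gvp\circ s=\Wt\gvp$ at the element $\gd$: the identity $s(\gvp(\gd))=\Wt\gvp(\gd)$ reads $(\gt_{\gvp(\gd)},\gvp(\gd))=(c_\gd,\gvp(\gd))$, so $\gt_{\gvp(\gd)}=c_\gd$, which is precisely (NM2).

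The only conceptually nontrivial point—really the heart of the statement—is recognizing that the seemingly ad hoc requirement $\gvp\circ s=\Wt\gvp$ is a packaging of axiom (NM2); without it a section of $p_\gvp$ would only furnish (NM1). Everything else is bookkeeping against Definition \ref{def normal map} and Lemma \ref{lem tildegvp and p}, so I do not anticipate any genuine obstacle.
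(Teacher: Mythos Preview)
Your proof is correct and follows essentially the same approach as the paper: in the backward direction the arguments are identical, and in the forward direction you construct the section $s(g)=(\ell(g),g)$ explicitly and verify it directly, whereas the paper obtains the same $s$ by invoking the universal property of $N(\gvp)$ applied to the factorization $\gC\xrightarrow{\gvp}G\xrightarrow{\mathrm{id}}G$ (whose universal map, per Proposition~\ref{prop N(gvp)}, is precisely your $s$). Your observation that the condition $\gvp\circ s=\Wt\gvp$ encodes exactly (NM2), while membership in $N(\gvp)$ encodes (NM1), is the conceptual content of the lemma and is made more transparent in your write-up than in the paper's.
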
 
\begin{proof}
Assume first that $\gvp$ is a normal map.
Then $\gvp$ decomposes as $\gC\xrightarrow{\gvp} G\xrightarrow{id} G,$
with $\gvp$ a normal map.  Thus $H, f$ of diagram \eqref{diag in}
are $G,  id$ respectively. The universality property of the injective normalizer
implies the existence of a map $s\colon G\to N(\gvp)$ (the map $\Wt{f}$)
rendering diagram \ref{diag in} commutative.  In particular $s\circ p_{\gvp}=id,$
and $\gvp\circ s=\Wt{\gvp}$.

Conversely, assume there exists $s\colon G\to N(\gvp)$ with $s\circ p_{\gvp}=id$ and $\gvp\circ s=\Wt{\gvp}$.
Define an action of $G$ on $\gC$ as follows.  Let $\pi\colon N(\gvp)\to\Aut(\gC)$ be the 
projection on the first coordinate, and let $\gc^g=\pi(s(g))(\gc),$ for all $\gc\in\gC$
and $g\in G$.  We show that this is a normal structure on $\gvp$.

Let $\gc\in\gC$ and $g\in G$.  Then $(\gc^g)\gvp=\gvp(\pi(s(g))(\gc))$.
Notice now that $s(g)=(\gt,g)\in N(\gvp),$ with $\gt\in\Aut(\gC),$ since $s\circ p_{\gvp}=id$.
Thus  $(\gc^g)\gvp=\gvp(\pi(s(g))(\gc))=\gvp(\gt(\gc))$.  Since $(\gt, g)$ is a compatible
pair, $\gvp(\gt(\gc))=\gvp(\gc)^g$.  Thus we see that $(\gc^g)\gvp=\gvp(\gc)^g$ and (NM1) holds.

Next for $\gc, \gd\in\gC$ we have 
\[
(\gc)^{\gd\gvp}=\pi(s(\gvp(\gd)))(\gc)=\pi(\Wt{\gvp}(\gd))(\gc)=c_{\gd}(\gc)=\gc^{\gd},
\]
and (NM2) holds.
\end{proof}
\section{Stability of the normalizers tower}
\noindent
In this section $\gvp\colon\gC\to G$ is a homomorphism
of groups and $\gC\overset{\Wt\gvp}\lr N(\gvp)\overset{p_{\gvp}}\lr G$ is
its injective normalizer.  Theorem \ref{thm nor tower} below generalizes the obvious
observation that the repeated normalizer of any subgroup of a finite group
stabilizes (the case where $\gvp$ is injective), and the well known
stability of the automorphism tower of a finite group with trivial center
(here: the case where $G=1$, see Remark \ref{rem triv. map}(1)). 

The following result will be required for the proof of
Theorem \ref{thm nor tower}.
 
\begin{thm}\label{thm W}
\begin{enumerate}
\item
$($Wielandt, \cite[13.5.2]{R}$)$ Let $H$ be a finite group and let
$K$ be a subnormal subgroup of $H$ such that $C_H(K)=1$.
Then there is an upper bound on $|H|$ depending only on $|K|$.

\item
$($\cite[13.5.3]{R}$)$ Let $\ga$ be any ordinal, and
let $K=K_0\nsg K_1\nsg\dots\nsg K_{\ga}\nsg K_{\ga+1}\nsg\dots$ be an 
ascending chain of groups such that $C_{K_{\ga+1}}(K_{\ga})=1$, for all $\ga$.
Then  $C_{K_{\ga}}(K_0)=1$.
\end{enumerate}
\end{thm}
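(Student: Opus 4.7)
The plan for part (1) is to recover Wielandt's theorem from the classical automorphism-tower machinery. From $C_H(K)=1$ we have $Z(K)\le C_H(K)=1$, so the inner-automorphism map embeds $K\hookrightarrow\Aut(K)$. Pick a subnormal chain $K=K_0\nsg K_1\nsg\cdots\nsg K_n=H$; for each $i\ge 1$, $C_{K_i}(K_{i-1})\le C_H(K)=1$, so conjugation on the normal subgroup $K_{i-1}$ gives an injection $K_i\hookrightarrow\Aut(K_{i-1})$. Iterating these embeddings puts $H$ inside the $n$-th term of the automorphism tower of $K$. Wielandt's own automorphism-tower theorem---that the automorphism tower of a finite centerless group stabilizes after finitely many steps at a group of order depending only on $|K|$---then supplies the desired uniform bound on $|H|$.

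For part (2), the plan is a transfinite induction on $\alpha$ with inductive statement $C_{K_\alpha}(K_0)=1$. The base $\alpha=0$ is $Z(K_0)\le C_{K_1}(K_0)=1$. At a limit ordinal $\lambda$, any element of $K_\lambda=\bigcup_{\mu<\lambda}K_\mu$ already lies in some $K_\mu$, so induction applies. The successor step is the heart: assume $C_{K_\alpha}(K_0)=1$ and take $c\in C_{K_{\alpha+1}}(K_0)$. Since $K_\alpha\nsg K_{\alpha+1}$, for every $y\in K_\alpha$ the commutator $[y,c]$ lies in $K_\alpha$. A short calculation using $cx=xc$ for all $x\in K_0$ shows that $[y,c]$ commutes with $x$ whenever $yxy^{-1}\in K_0$; thus, provided $y$ normalizes $K_0$, one has $[y,c]\in C_{K_\alpha}(K_0)=1$ by the inductive hypothesis. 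Running this over all such $y$ forces $c\in C_{K_{\alpha+1}}(K_\alpha)=1$, closing the induction.

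The main obstacle is the successor step of part (2): the commutator identity genuinely requires $y\in K_\alpha$ to normalize $K_0$, so one needs $K_0\nsg K_\alpha$ throughout the chain. This normality is automatic in the two settings where Theorem~\ref{thm W} is used---the automorphism tower of a centerless group, and the subnormal series appearing in part (1)---but in the full generality of the statement it demands a simultaneous transfinite induction also establishing $K_0\nsg K_\alpha$, which is what one finds in \cite{R}. Part (1), by contrast, is essentially a direct packaging of Wielandt's automorphism-tower theorem after the elementary subnormal-series reduction, and introduces no new difficulties.
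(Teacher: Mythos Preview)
The paper does not prove Theorem~\ref{thm W}; both parts are quoted from Robinson \cite[13.5.2, 13.5.3]{R} as known ingredients, and are then applied in the proof of Theorem~\ref{thm nor tower}. So there is no in-paper argument to compare against; your proposal supplies sketches for results the paper treats as black boxes.

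That said, your approach to part~(1) is circular. In Robinson's development---and in every standard account---Wielandt's automorphism-tower theorem \cite[13.5.4]{R} is \emph{deduced from} 13.5.2 and 13.5.3, i.e., from precisely the two statements you are attempting to prove here. (The paper itself follows this logic: it invokes parts~(1) and~(2) to obtain a generalization of the automorphism-tower result in Theorem~\ref{thm nor tower}.) Invoking the automorphism-tower theorem to bound $|H|$ therefore begs the question. There is also a gap in the ``iterating these embeddings'' step: the injections $K_i\hookrightarrow\Aut(K_{i-1})$ do not, without further work, assemble into embeddings $K_i\hookrightarrow A_i$ into the automorphism tower; an embedding $K_{i-1}\hookrightarrow A_{i-1}$ does not by itself induce $\Aut(K_{i-1})\hookrightarrow\Aut(A_{i-1})$. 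Robinson's proof of 13.5.2 is direct and does not pass through any tower-stabilization statement.

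Your treatment of part~(2) is on the right track: the commutator calculation is correct, and you correctly isolate the crux---that the successor step requires $K_0\nsg K_\alpha$, which must be carried along as an auxiliary inductive hypothesis. This is indeed how the argument in \cite[13.5.3]{R} is organized.
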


\begin{lemma}\label{lem useful}
We have
\begin{enumerate}
\item
$p_{\gvp}(N(\gvp))\le N_G(\gvp(\gC))$;

\item
if $(\gt, g)\in N(\gvp),$ then $\gt(\ker\gvp)=\ker\gvp,$ and hence 
$\gt$ acts on $\gC/\ker\gvp$;

\item
$C_{N(\gvp)}(\Wt\gvp(\gC))=\{(\gt, g)\in N(\gvp)\mid \gt\in C_{\Aut(\gC)}(\gC/Z(\gC))\text{ and }g\in C_G(\gvp(\gC))\}$;

\item
$\ker p_{\gvp}=\{(\gt, 1)\in N(\gvp)\mid \gt\in C_{\Aut(\gC)}(\gC/\ker\gvp)\}$ and $\Wt{\gvp}(\ker\gvp)\le\ker p_{\gvp}$;

\item
$\ker\Wt\gvp=Z(\gC)\cap\ker\gvp$;

\item
if $\ker\Wt\gvp=1,$ then  $\ker\Wt{\gvp_1}=1$, where $\gvp_1:=p_{\gvp}\colon N(\gvp)\to G$;

\item
set $p_{\gvp}=\gvp_1$ and assume that  $Z(\ker\gvp)=1,$ then
\begin{enumerate}
\item
$C_{\ker \gvp_1}(\Wt{\gvp}(\ker\gvp))=1$;

\item
$Z(\ker \gvp_1)=1$.
\end{enumerate}
\end{enumerate}
\end{lemma}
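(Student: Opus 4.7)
The plan is to verify the seven parts in order, exploiting the explicit description of $N(\gvp) \subseteq \Aut(\gC) \times G$ as the group of compatible pairs $(\gt,g)$ satisfying $\gvp(\gc)^g = \gvp(\gt(\gc))$, together with the formulae $\Wt\gvp(\gc) = (c_\gc, \gvp(\gc))$ and $p_\gvp(\gt,g) = g$. Parts (1)--(5) are routine unwindings. For (1), the compatibility relation applied over all $\gc \in \gC$ gives $\gvp(\gC)^g = \gvp(\gt(\gC)) = \gvp(\gC)$; for (2), applied at $\gc \in \ker\gvp$, it gives $\gt(\gc) \in \ker\gvp$, with equality following from the fact that $(\gt^{-1}, g^{-1}) \in N(\gvp)$ by Lemma \ref{lem Ngvp is a gp}. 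For (3), a componentwise analysis of commutation with each $\Wt\gvp(\gc)=(c_\gc,\gvp(\gc))$ gives $g \in C_G(\gvp(\gC))$ on the second coordinate, while $\gt c_\gc = c_\gc \gt$ for every $\gc \in \gC$ rewrites, via $\gt c_\gc \gt^{-1} = c_{\gt(\gc)}$, as $\gt(\gc)\gc^{-1} \in Z(\gC)$, i.e.\ $\gt \in C_{\Aut(\gC)}(\gC/Z(\gC))$. For (4) and (5), setting $g=1$ in the compatibility condition yields $\gt(\gc)\gc^{-1} \in \ker\gvp$, while $\Wt\gvp(\gc) = (c_\gc,1) \in \ker p_\gvp$ for $\gc \in \ker\gvp$, and $\Wt\gvp(\gc) = (\mathrm{id},1)$ holds precisely when $\gc \in Z(\gC) \cap \ker\gvp$.

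For (6), applying (5) to $\gvp_1 := p_\gvp$ rewrites $\ker\Wt{\gvp_1}$ as $Z(N(\gvp)) \cap \ker p_\gvp$. Any $(\gt,1)$ in this intersection satisfies $\gt(\gc)\gc^{-1} \in \ker\gvp$ by (4), and $\gt(\gc)\gc^{-1} \in Z(\gC)$ by (3) (since $Z(N(\gvp)) \subseteq C_{N(\gvp)}(\Wt\gvp(\gC))$); the hypothesis $\ker\Wt\gvp = 1$ says precisely $Z(\gC) \cap \ker\gvp = 1$ via (5), so $\gt = \mathrm{id}$.

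The main obstacle is (7)(a); once it is established, (7)(b) follows at once, since (4) puts $\Wt\gvp(\ker\gvp)$ inside $\ker p_\gvp = \ker\gvp_1$, so $Z(\ker\gvp_1) \subseteq C_{\ker\gvp_1}(\Wt\gvp(\ker\gvp))$. For (7)(a), the key observation is that $Z(\ker\gvp) = 1$ forces $C_\gC(\ker\gvp) \cap \ker\gvp = Z(\ker\gvp) = 1$, and in particular $Z(\gC) \cap \ker\gvp = 1$. The argument is then a two-step fixing. Let $(\gt,1) \in C_{\ker\gvp_1}(\Wt\gvp(\ker\gvp))$. \emph{Step 1 (fix $\ker\gvp$ pointwise).} For $\gc \in \ker\gvp$, commutation of $(\gt,1)$ with $\Wt\gvp(\gc) = (c_\gc,1)$ puts $\gt(\gc)\gc^{-1}$ into $Z(\gC)$ as in (3), while (2) puts it into $\ker\gvp$; hence $\gt(\gc) = \gc$. \emph{Step 2 (fix all of $\gC$).} For $\gd \in \gC$, by (4) we have $\gd^{-1}\gt(\gd) \in \ker\gvp$. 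For any $\gc \in \ker\gvp$, normality combined with Step 1 yields
\[
\gt(\gd)\gc\gt(\gd)^{-1} = \gt(\gd\gc\gd^{-1}) = \gd\gc\gd^{-1},
\]
so $\gd^{-1}\gt(\gd) \in C_\gC(\ker\gvp) \cap \ker\gvp = 1$, giving $\gt(\gd) = \gd$. Thus $\gt = \mathrm{id}$ and $C_{\ker\gvp_1}(\Wt\gvp(\ker\gvp)) = 1$.
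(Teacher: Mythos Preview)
Your proof is correct, and for parts (1)--(6) it follows the paper's argument essentially verbatim. The interesting divergence is in part (7)(a). The paper, after establishing that $\gt$ fixes $\ker\gvp$ pointwise (your Step~1), invokes the classical identification
\[
C_{\Aut(\gC)}(\gC/\ker\gvp)\cap C_{\Aut(\gC)}(\ker\gvp)\ \cong\ Z^1\bigl(\gC/\ker\gvp,\ Z(\ker\gvp)\bigr)
\]
from \cite[Satz I.4.4]{H}, concluding immediately since $Z(\ker\gvp)=1$. Your Step~2 replaces this citation with a direct hands-on computation: using that $\gt$ fixes $\ker\gvp$ and acts trivially on $\gC/\ker\gvp$, you show $\gd^{-1}\gt(\gd)\in C_{\gC}(\ker\gvp)\cap\ker\gvp=Z(\ker\gvp)=1$ by conjugating an arbitrary $\gc\in\ker\gvp$. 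This is, in effect, an unwinding of the cocycle identification in the special case at hand: the map $\gd\mapsto \gd^{-1}\gt(\gd)$ is precisely the derivation associated to $\gt$, and your argument shows directly that it lands in $Z(\ker\gvp)$. The gain is self-containment and elementarity; the paper's route is shorter once the cited isomorphism is accepted.
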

\begin{proof}
(1):\quad
$(\gt, g)\in N(\gvp)$ iff $\gvp(\gc)^g=\gvp(\gt(\gc)),$ for all $\gc\in\gC$.
Hence $\gvp(\gc)^g\le\gvp(\gC),$ for all $\gc\in\gC,$ so $g\in N_G(\gvp(\gC))$.
\medskip

\noindent
(2):\quad
If $\gc\in\ker\gvp,$ then $1=\gvp(\gc)^g=\gvp(\gt(\gc)),$ so $\gt(\gc)\in\ker\gvp$. 
Since by Lemma \ref{lem Ngvp is a gp}(2)  $(\gt^{-1}, g^{-1})\in N(\gvp),$
part (2) holds.  
\medskip

\noindent
(3):\quad
By definition, $\Wt\gvp(\gC)=\{(c_{\gc},\gvp(\gc))\mid\gc\in\gC\}$.
Thus if $(\gt, g)\in C_{N(\gvp)}(\Wt\gvp(\gC)),$ then $\gt\in C_{\Aut(\gC)}({\rm Inn}(\gC))$ and $g\in C_G(\gvp(\gC))$.
It is easy to check that $C_{\Aut(\gC)}({\rm Inn}(\gC))=C_{\Aut(\gC)}(\gC/Z(\gC))$.
\medskip

\noindent
(4):\quad
If $(\gt,1)\in\ker p_{\gvp},$ then $\gvp(\gc)=\gvp(\gc)^1=\gvp(\gt(\gc)),$ so we see that
this is equivalent to $\gt$ centralizing $\gC/\ker\gvp$.  The second part of (4) follows from
$\Wt{\gvp}\circ p_{\gvp}=\gvp$.
\medskip

\noindent
(5):\quad
By definition, $\ker\Wt\gvp=\{\gc\in\gC\mid (c_{\gc},\gvp(\gc))=(id, 1)\}$.  Thus
$\gc\in Z(\gC)\cap\ker\gvp$.
\medskip

\noindent
(6):\quad
By (5), $\ker\Wt{\gvp_1}=Z(N(\gvp))\cap\ker(p_{\gvp})$.  Let $(\gt, g)\in\ker\Wt{\gvp_1}$.
By (4), $g=1$.  By (3) and (4), 
$\gt\in C_{\Aut(\gC)}(\gC/Z(\gC))\cap C_{\Aut(\gC)}(\gC/\ker\gvp)=C_{\Aut(\gC)}(\gC/(Z(\gC)\cap\ker\gvp)=C_{\Aut(\gC)}(\gC)=id$,
because $1=\ker\Wt\gvp=Z(\gC)\cap\ker\gvp$.
\medskip

\noindent
(7):\quad
(a):\quad
Note that $\Wt{\gvp}(\ker\gvp)=\{(c_{\gc}, 1)\mid\gc\in\ker\gvp\}\le\ker p_{\gvp}$.
Let now $(\gt,1)\in C_{\ker p_{\gvp}}(\Wt{\gvp}(\ker\gvp))$.  
By (2), $\gt(\ker\gvp)=\ker\gvp$.
It follows that the restriction of $\gt$ to $\ker\gvp$ centralizes $\ker\gvp/Z(\ker\gvp)$.
Since $Z(\ker\gvp)=1,$ we see that $\gt\in C_{\Aut(\gC)}(\ker\gvp)$.
Thus, by (4),
\begin{equation}\label{eq kerpgvp}
\gt\in C_{\Aut(\gC)}(\gC/\ker\gvp)\cap C_{\Aut(\gC)}(\ker\gvp).
\end{equation}
However,  $C_{\Aut(\gC)}(\gC/\ker\gvp)\cap C_{\Aut(\gC)}(\ker\gvp)\cong Z^1(\gC/\ker\gvp,\ Z(\ker\gvp))$ 
(see, e.g.~\cite[Satz I.4.4]{H}).  Since $Z(\ker\gvp)=1,$ equation \eqref{eq kerpgvp}
implies that $\gt=id$. This shows part (a).
\smallskip

\noindent
(b):\quad
Let $(\gt,1)\in Z(\ker p_{\gvp}),$ then, in particular, $(\gt,1)\in C_{\ker p_{\gvp}}(\Wt{\gvp}(\ker\gvp)),$
so by (a), $\gt= id$.
 \end{proof}
 
As a corollary we get the following

\begin{thm}\label{thm nor tower}
Assume  that $Z(\ker\gvp)=1$ and that $\, \gC$ and $G$ are finite. 
Then the normalizers tower $($see diagram  \ref{eq inj nor tower}$)$ 
terminates after a finite number of steps. 
\end{thm}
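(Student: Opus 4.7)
The plan is to run the hypothesis $Z(\ker\gvp)=1$ up the entire tower by iterating Lemma \ref{lem useful}(7), then use Wielandt's theorem (Theorem \ref{thm W}) to uniformly bound the orders of all the groups in the tower.

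First I would show by induction on $\ga$ that $Z(\ker\gvp_\ga)=1$. The base case is the hypothesis. Assuming $Z(\ker\gvp_\ga)=1$, Lemma \ref{lem useful}(7b), applied with $\gvp_\ga$ in place of $\gvp$ and $\gvp_{\ga+1}=p_{\gvp_\ga}$ in place of $\gvp_1$, gives $Z(\ker\gvp_{\ga+1})=1$. Next, by Lemma \ref{lem useful}(5) the kernel of $\Wt{\gvp_\ga}\colon\gC^\ga\to\gC^{\ga+1}$ equals $Z(\gC^\ga)\cap\ker\gvp_\ga$, which is a central subgroup of $\ker\gvp_\ga$ and hence lies in $Z(\ker\gvp_\ga)=1$. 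So each $\Wt{\gvp_\ga}$ is injective, and we may identify $\gC^\ga$ with its image $\Wt{\gvp_\ga}(\gC^\ga)\le\gC^{\ga+1}$. By Lemma \ref{lem basic cm}(2) this image is normal in $\gC^{\ga+1}$, and since $\gvp_{\ga+1}$ restricts along this identification to $\gvp_\ga$, the kernels form an ascending chain with $\ker\gvp_\ga\nsg\ker\gvp_{\ga+1}$.

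Now Lemma \ref{lem useful}(7a), applied at each level, gives $C_{\ker\gvp_{\ga+1}}(\ker\gvp_\ga)=1$ for every $\ga$. Theorem \ref{thm W}(2) then yields $C_{\ker\gvp_\ga}(\ker\gvp_0)=1$ for all ordinals $\ga$ in the tower. Thus $\ker\gvp_0=\ker\gvp$ is a finite subnormal subgroup of the finite group $\ker\gvp_\ga$ with trivial centralizer, so Wielandt's theorem (Theorem \ref{thm W}(1)) furnishes a uniform bound on $|\ker\gvp_\ga|$ depending only on $|\ker\gvp|$. Since $\gvp_\ga(\gC^\ga)\le G$ is also finite, we conclude that
\[
|\gC^\ga|=|\ker\gvp_\ga|\cdot|\gvp_\ga(\gC^\ga)|
\]
is bounded by a constant depending only on $|\ker\gvp|$ and $|G|$.

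Finally, the ascending chain of finite groups $\gC^0\le\gC^1\le\gC^2\le\cdots$ has uniformly bounded order, so it must stabilize at some finite stage $\ga_0$, at which point $\Wt{\gvp_{\ga_0}}$ is an injection between groups of equal finite order, hence an isomorphism of normal maps over $G$. By uniqueness (Lemma \ref{uniqueness of in}) the tower terminates at step $\ga_0$; note that limit-ordinal stages cannot appear before termination because the chain has already stabilized finitely. The principal subtlety is to verify at each step that the identifications via the (injective) maps $\Wt{\gvp_\ga}$ are compatible so that Theorem \ref{thm W}(2) applies to the honest inclusion chain $(\ker\gvp_\ga)$, and this is guaranteed by the inductive propagation of $Z(\ker\gvp_\ga)=1$ established in the first step.
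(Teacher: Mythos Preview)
Your proof is correct and follows essentially the same route as the paper: propagate $Z(\ker\gvp_\ga)=1$ inductively via Lemma~\ref{lem useful}(7b), deduce injectivity of each $\Wt{\gvp_\ga}$, obtain the ascending normal chain of kernels with trivial successive centralizers from Lemma~\ref{lem useful}(7a), and finish with Wielandt's theorem (Theorem~\ref{thm W}). The only cosmetic difference is in the endgame: the paper stabilizes the kernels $K_i$ and the images $\gvp_i(\gC^i)\le G$ separately and takes $m=\max\{s,t\}$, whereas you bound $|\gC^\ga|=|\ker\gvp_\ga|\cdot|\gvp_\ga(\gC^\ga)|$ directly and invoke the ascending chain; also, your appeal to Lemma~\ref{uniqueness of in} at the very end is unnecessary, since once $\Wt{\gvp_{\ga_0}}$ is a bijection the tower has stabilized by definition.
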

\begin{proof}
(1):\quad
Consider the normalizers tower of diagram \eqref{eq inj nor tower}.
Let $K_i=\ker\gvp_i,$ for all non-negative integers $i$.

Note that by induction on $i,$  by our assumption
that $Z(\ker\gvp)=1,$ and by Lemma \ref{lem useful}(7b), we have $Z(\ker\gvp_i)=1,$ for all non-negative integers $i$.
By Lemma \ref{lem useful}(5\&6),
$\Wt{\gvp_i}$ is injective, and by Lemma \ref{lem useful}(4), $\Wt{\gvp_{i}}(K_i)\le K_{i+1},$
for all non-negative integers $i$.

Thus, after appropriate identifications, and using the fact
that $\Wt{\gvp_i}$ is a normal map (so its image is normal
in $\gC^{i+1}$), we have $K_i\nsg K_{i+1},$ for all nonnegative
integers $i$, and furthermore,
$C_{K_{i+1}}(K_i)=1$, by Lemma \ref{lem useful}(7a).

We can now apply Theorem \ref{thm W} as in the proof of Wielandt's
Theorem that the automorphism tower of a group with trivial center
terminates after finitely many steps (see, e.g., \cite[13.5.4]{R}).
Namely, by Theorem \ref{thm W}(2), $C_{K_i}(K_0)=1,$ for all positive 
integers $i,$ and hence, by Theorem \ref{thm W}(1), there
exists $s$ such that 
\[
K_s=K_{s+1}=K_{s+2}\dots\ .
\]
However,
by definition, $\gvp_i(\gC^i)\le \gvp_{i+1}(\gC^{i+1}),$
for all $i\ge 0$.  Hence there exists $t$ such that
\[
\gvp_t(\gC^t)=\gvp_{t+1}(\gC^{t+1})=\gvp_{t+2}(\gC^{t+2})\dots\ .
\]  
It follows that for $m=\max\{t,s\}$ we get that
$\gC^m\cong\gC^{m+1}\cong \gC^{m+2}\dots\ .$
\end{proof}

\appendix

\section{Details of the construction of the free normal closure}\label{app A} 

Recall the definition of the group $F$ from the beginning of \S\ref{sect ex nc}.

We define a map of groups
\[
\W\gvp\colon F\to G:\ \gc_g\mapsto \gvp(\gc)^g,\quad \gc\in\gC,\ g\in G.
\] 

First we show that the map $\W\gvp$ respects the relations $R_1, R_{nm1}$ and $R_{nm2}$.

\begin{lemma}\label{lem hatgvp respects R}
The map $\W\gvp$ respects the relations $R_1, R_{nm1}, R_{nm2}$.
\end{lemma}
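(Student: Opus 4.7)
The plan is entirely routine verification: for each relation $R_1$, $R_{nm1}$, $R_{nm2}$ I will check that $\W\gvp$ sends the left-hand side and right-hand side of the relation to the same element of $G$. Since $\W\gvp$ is already defined as a homomorphism on the free group $F$, this suffices to conclude that it descends to a well-defined map on the quotient (though the present statement only concerns respecting the relations).

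For $R_1$, the image $\W\gvp(1_g)=\gvp(1)^g=1$ is independent of $g\in G$, because $\gvp$ sends the identity of $\gC$ to the identity of $G$, which is central. For $R_{nm1}$, since $\W\gvp$ is a group homomorphism on $F$ and $\gvp$ is a homomorphism $\gC\to G$, one has
\[
\W\gvp(\gc_g\gd_g)=\gvp(\gc)^g\gvp(\gd)^g=(\gvp(\gc)\gvp(\gd))^g=\gvp(\gc\gd)^g=\W\gvp((\gc\gd)_g).
\]

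The only computation requiring a moment's thought is $R_{nm2}$. Here I would use the standard identity $x^{yz}=(x^y)^z$ in any group: setting $x=\gvp(\gc)$, $y=g$, $z=\gvp(\gd)^h$ yields
\[
\W\gvp(\gc_{g\gvp(\gd)^h})=\gvp(\gc)^{g\gvp(\gd)^h}=(\gvp(\gc)^g)^{\gvp(\gd)^h}=\gvp(\gd)^{-h}\gvp(\gc)^g\gvp(\gd)^h,
\]
which matches $\W\gvp(\gd_h^{-1}\gc_g\gd_h)$ on the nose. There is no genuine obstacle here; the main (minor) point is simply keeping the ``right-action'' conjugation convention $a^b=b^{-1}ab$ of the paper straight, so that $\gvp(\gd)^h$ in the subscript on the right-hand side of $R_{nm2}$ is read as an element of $G$ to be used as a further conjugator, and the identity $x^{yz}=(x^y)^z$ applies cleanly.
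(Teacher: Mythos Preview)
Your argument is correct and follows essentially the same route as the paper's own proof: a direct computation for each of the three families of relations, using that $\W\gvp$ is a homomorphism on $F$ and that conjugation satisfies $x^{yz}=(x^y)^z$. The only cosmetic difference is that for $R_{nm2}$ you compute from the right-hand side towards the left-hand side while the paper goes left to right.
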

\begin{proof}
For the relations $R_1$ we have
\[
\W\gvp(1_g)=\gvp(1)^g=1^g=1\in G,\quad\forall g\in G.
\]

For the relations $R_{nm1},$ let $\gc, \gd\in\gC$ and $g\in G,$ then we get
%
\begin{gather*} 
\W\gvp(\gc_g\gd_g)=\W\gvp(\gc_g)\W\gvp(\gd_{g})=\gvp(\gc)^g\gvp(\gd)^g\\
=\gvp(\gc\gd)^g=\W\gvp((\gc\gd)_g).
\end{gather*}
Hence
\[
\W\gvp(\gc_g\gd_g)=\W\gvp((\gc\gd)_g).
\]
Finally
for the relations $R_{nm2},$ let $\gc,\gd\in\gC$ and $h,g\in G,$ then
%
%
\begin{gather*} 
\W\gvp(\gd_h^{-1}\gc_g\gd_h)=\W\gvp(\gd_h^{-1})\W\gvp(\gc_g)\W\gvp(\gd_h)\\
=(\gvp(\gd)^h)^{-1}\gvp(\gc)^g\gvp(\gd)^h=\gvp(\gc)^{g\gvp(\gd)^h}=\W\gvp(\gc_{g\gvp(\gd)^h}).
\end{gather*}
Hence
\[
\W\gvp(\gd_h^{-1}\gc_g\gd_h)=\W\gvp(\gc_{g\gvp(\gd)^h}),
\]
and the lemma holds.
\end{proof}

By Lemma \ref{lem hatgvp respects R} the map $\W\gvp$
 induces
a map, which we continue to denote by  $\W\gvp$,
as in equation \eqref{eq hatgvp}.

\begin{lemma}\label{lem the aut h}
The map $\W{\ell}\colon G\to\cl$ defined in equation \eqref{eq action on cl}
is a well defined homomorphism of groups.
\end{lemma}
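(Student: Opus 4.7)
The plan is to build $\W\ell(h)$ for each $h\in G$ as an endomorphism of $\cl$ by first defining it on the free group $F$ and then verifying that the defining relations of $\cl$ are preserved. Concretely, for $h\in G$, let $\gs_h\colon F\to F$ be the unique group homomorphism sending each generator $\gc_g$ to $\gc_{gh}$. I will then check that $\gs_h$ descends to an endomorphism of $\cl$ by showing it respects the three families of relations $R_1,R_{nm1},R_{nm2}$. Relations $R_1$ and $R_{nm1}$ are immediate: $\gs_h(1_g)=1_{gh}$ which is the identity in $\cl$, and $\gs_h(\gc_g\gd_g)=\gc_{gh}\gd_{gh}=(\gc\gd)_{gh}=\gs_h((\gc\gd)_g)$ using $R_{nm1}$ in $\cl$.

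The main verification is for $R_{nm2}$: applying $\gs_h$ to the left side of $\gd_k^{-1}\gc_g\gd_k=\gc_{g\gvp(\gd)^k}$ gives $\gd_{kh}^{-1}\gc_{gh}\gd_{kh}$, and relation $R_{nm2}$ in $\cl$ rewrites this as $\gc_{gh\cdot\gvp(\gd)^{kh}}$. A direct calculation in $G$ shows $gh\cdot\gvp(\gd)^{kh}=gh\cdot h^{-1}\gvp(\gd)^k h=g\gvp(\gd)^k h$, which matches $\gs_h(\gc_{g\gvp(\gd)^k})=\gc_{g\gvp(\gd)^k h}$. This is the only step that is not completely routine; everything else follows from the universal property of the free group. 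Thus $\gs_h$ induces an endomorphism, which I will continue to denote $\W\ell(h)\colon\cl\to\cl$, characterized by $(\W\gc_g)^h=\W\gc_{gh}$ as in \eqref{eq action on cl}.

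Next I will verify the homomorphism property on generators. For $g,h,k\in G$ and $\gc\in\gC$ we have
\[
(\W\gc_k)^{gh}=\W\gc_{kgh}=(\W\gc_{kg})^h=((\W\gc_k)^g)^h,
\]
so $\W\ell(gh)=\W\ell(g)\W\ell(h)$ (recall maps act on the right). Furthermore $\W\ell(1)$ fixes every generator $\W\gc_g$, so it is the identity of $\cl$. Consequently each $\W\ell(h)$ is invertible with inverse $\W\ell(h^{-1})$, and so lies in $\Aut(\cl)$. Hence $\W\ell\colon G\to\Aut(\cl)$ is a well-defined group homomorphism, which is the content of the lemma (as stated in equation \eqref{eq action on cl}).

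The only subtle point is the computation with $R_{nm2}$, where the compatibility between the action symbol $\gvp(\gd)^{kh}$ in $G$ and the re-indexing by $h$ of the subscripts has to line up; once one observes that the exponent automatically conjugates by $h$ under the substitution $k\mapsto kh$, the rest is bookkeeping.
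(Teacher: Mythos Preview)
Your proof is correct and follows essentially the same approach as the paper: define the map on the free group $F$, verify that the relations $R_1$, $R_{nm1}$, $R_{nm2}$ are preserved (with the key computation being the one for $R_{nm2}$, where $gh\cdot\gvp(\gd)^{kh}=g\gvp(\gd)^kh$), and then check the homomorphism property on generators together with invertibility via $\W\ell(h^{-1})$. The only cosmetic difference is that you explicitly record $\W\ell(1)=\mathrm{id}$ and the preservation of $R_1$, which the paper leaves implicit.
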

\begin{proof}
We start by defining an action of $G$ on $F$.  
For each $h\in G$ we define an automorphism $\ell(h)\in\Aut(F)$.
We denote the image of $\gc_g\in F$ under $\ell(h)$ by $\gc_g^h,$
and we define:
\[
\gc_g^h=\gc_{gh},\quad\text{for all }\gc\in\gC,\, g\in G.
\]
Since $F$ is a free group on the generators $(\gC\sminus\{1\})_G,$
$\ell(h)$ determines a unique automorphism of $F$.

Next we show that the automorphism $\ell(h)$ of $F$ preserves
the relations $R_{nm1}$ and $R_{nm2}$.

For the relations $R_{nm1}$ we need show that
\[
(\gc_g)^h(\gd_g)^h=((\gc\gd)_g)^h,\quad\forall\gc,\gd\in\gC\text{ and }\forall g\in G,
\]
is a relation in $R_{nm1}$.  Indeed 
\[
(\gc_g)^h(\gd_g)^h=\gc_{gh}\gd_{gh}\quad\text{ while }\quad ((\gc\gd)_g)^h=(\gc\gd)_{gh}.
\]
and
\[
\gc_{gh}\gd_{gh}=(\gc\gd)_{gh},
\]
is a relation in $R_{nm1}$.

For the relations $R_{nm2}$ we need show that
\[
(\gd_t^{-1}\gc_g\gd_t)^h=(\gc_{g\gvp(\gd)^t})^h,\quad\forall\gc,\gd\in\gC\text{ and }\forall h,g,t\in G,
\]
is a relation in $R_{nm2}$.  Indeed 
\[
(\gd_t^{-1}\gc_g\gd_t)^h=\gd_{th}^{-1}\gc_{gh}\gd_{th}\quad\text{while}\quad(\gc_{g\gvp(\gd)^t})^h=\gc_{g\gvp(\gd)^th}.
\]
and
\[
\gd_{th}^{-1}\gc_{gh}\gd_{th}=\gc_{gh\gvp(\gd)^{th}} (=\gc_{g\gvp(\gd)^th}),
\]
is a relation in $R_{nm2}$.

Since $\ell(h)\in\Aut(F)$ preserves the relations $R_{nm1}$ and $R_{nm2},$
we can define the automorphism $\W{\ell}$ of equation \eqref{eq action on cl} 
on the generators $\{\W\gc_g\mid\gc\in\gC\text{ and }g\in G\}$ of $\cl,$
and extend $\W\ell$ to an automorphism of $\cl$.
Since $\W\ell(h)$ has an inverse $\W\ell(h^{-1}),$
it is indeed an automorphism of $\cl$.
Further
$(\W\gc_g)^{hh'}=\W\gc_{ghh'}=(\W\gc_g^h)^{h'},$
Thus
$\W\ell\colon G\to\Aut(\cl)$
is a homomorphism.
\end{proof}

\begin{lemma}\label{lem Wgvp is a nm}
The map $\W\gvp\colon\cl\to G$ is a normal map,
having the normal structure $\W\ell\colon G\to\Aut(\cl)$.
\end{lemma}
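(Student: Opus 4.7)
The plan is to verify the two axioms (NM1) and (NM2) of Definition \ref{def normal map} for $\W\gvp$ with the candidate action $\W\ell$, by checking each on the generating set $\{\W\gc_g\mid \gc\in\gC,\, g\in G\}$ of $\cl$ and then extending to all of $\cl$ via subgroup arguments.

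For (NM1), I would substitute $a=\W\gc_g$ and note that both sides of $(a^h)\W\gvp = (a\W\gvp)^h$ evaluate to $\gvp(\gc)^{gh}$ by the definitions of $\W\ell$ and $\W\gvp$: the left side is $\W\gvp(\W\gc_{gh}) = \gvp(\gc)^{gh}$, and the right side is $(\gvp(\gc)^g)^h = \gvp(\gc)^{gh}$. Since for each fixed $h$ both $a\mapsto(a^h)\W\gvp$ and $a\mapsto(a\W\gvp)^h$ are compositions of group homomorphisms $\cl\to G$, and hence are themselves homomorphisms in $a$, the identity extends from generators to all of $\cl$.

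For (NM2), the axiom $a^{bn}=a^b$ (with $n=\W\gvp$) on a pair of generators $a=\W\gc_g$, $b=\W\gd_h$ is \emph{precisely} the content of relation $R_{nm2}$ of equation \eqref{eq Rnm2}: the left side equals $(\W\gc_g)^{\gvp(\gd)^h} = \W\gc_{g\gvp(\gd)^h}$ by the formula for $\W\ell$, and the right side equals $\W\gd_h^{-1}\W\gc_g\W\gd_h = \W\gc_{g\gvp(\gd)^h}$ by $R_{nm2}$. To propagate to arbitrary $a,b\in\cl$ I would argue in two stages. First, fix a generator $b$: the set $T_b=\{a\in\cl\mid a^{bn}=a^b\}$ is the equalizer of two endomorphisms of $\cl$, namely $\W\ell(bn)$ and inner conjugation by $b$, hence a subgroup; since $T_b$ contains every generator, $T_b=\cl$. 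Second, the set $S=\{b\in\cl\mid a^{bn}=a^b\text{ for all }a\in\cl\}$ contains every generator by the first stage and is closed under products (using that $\W\gvp$ is a homomorphism and $\W\ell$ respects multiplication, then applying $b_1\in S$ to $a$ and $b_2\in S$ to $a^{b_1}$) and under inverses, so $S=\cl$.

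I do not expect any serious obstacle: the relation $R_{nm2}$ was engineered precisely so that (NM2) holds on generators, and (NM1) on generators is a direct computation from the formulas for $\W\ell$ and $\W\gvp$. The only care required is in the bookkeeping of the two nested subgroup-closure arguments for (NM2), which transfer the generator-level identity first to arbitrary $a$ (with $b$ a generator) and then to arbitrary $b$.
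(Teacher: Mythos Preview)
Your proposal is correct and follows essentially the same approach as the paper: both verify (NM1) and (NM2) on the generating set $\{\W\gc_g\}$ via the same direct computations, with (NM2) reducing immediately to relation $R_{nm2}$. The only difference is that the paper simply asserts that checking on generators suffices, whereas you spell out the two-stage equalizer/subgroup argument that justifies this reduction; this is a welcome elaboration rather than a different method.
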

\begin{proof}
Notice that we have
\[
((\W\gc_g)^h)\W\gvp=(\W\gc_{gh})\W\gvp=\gvp(\gc)^{gh}=\left(\gvp(\gc)^g\right)^h=((\W\gc_g)\W\gvp)^h,
\]
for all $\gc\in\gC$ and $g, h\in G$. That is
\[
((\W\gc_g)^h)\W\gvp=((\W\gc_g)\W\gvp)^h,\quad\forall g, h\in G.
\]
This shows that (NM1) in the definition of a normal map
is satisfied by $\W\gvp,$ on the generators of $\W\gC$
and hence (NM1) holds for $\W\gvp$.

Next we wish to show that (NM2) in the definition of a normal map
is satisfied by $\W\gvp$.  It suffices to show this for the 
generators of $\W\gC,$ that is, we need to show that 
%
\begin{equation}\tag{nm2}\label{eq nm2!}
\W\gc_g^{(\W\gd_h)\W\gvp}=\W\gc_g^{\W\gd_h},\quad\forall\gc,\gd\in\gC\text{ and }\forall g,h\in G.
\end{equation}
By the definition of $\W\gvp$ equation \eqref{eq nm2!}
means
\[
\W\gc_g^{(\W\gd_h)\W\gvp}=\W\gc_g^{\W\gd_h}\iff \W\gc_g^{\gvp(\gd)^h}=\W\gd_h^{-1}\W\gc_g\W\gd_h
\iff \W\gc_{g\gvp(\gd)^h}=\W\gd_h^{-1}\W\gc_g\W\gd_h.
\]
Since the last equality holds in $\cl,$ the lemma holds.
\end{proof}

\setcounter{subsection}{3}
\subsection{\bf The universality of $\cl$}\label{sub univ of nc}\hfill
\medskip

\noindent
We first define
\[
c_{\gvp}\colon\gC\to\cl:\ \gc\mapsto\W\gc_1.
\]
Let now $n\colon M\to G$ be a normal map such that
there exists $\psi\colon\gC\to M$ with $\psi\circ n=\gvp$.
We define a map
\[
\W\psi\colon F\to M:\ \gc_g\mapsto \psi(\gc)^g,
\]
where recall that $M$ is a crossed module, so there is an action
of $G$ on $M$ and $\psi(\gc)^g$ is the image of $\psi(\gc)$ under the
action of $g$.  Now
\[
(\gc_g\gd_g)\W\psi=\psi(\gc)^g\psi(\gd)^g=\psi(\gc\gd)^g=((\gc\gd)_g)\W\psi.
\]
Further
\begin{align*}
(\gc_{g\gvp(\gd)^h})\W\psi&=\psi(\gc)^{gh^{-1}\gvp(\gd)h}\\
&=\psi(\gc)^{gh^{-1}((\gd)\psi n)h}\quad\text{(because $\gvp=\psi\circ n$)}\\
&=\psi(\gc)^{gh^{-1}(\psi(\gd))h}\quad \text{(because $n$ is a normal map)}\\
&=\psi(\gc)^{g(\psi(\gd))^h}.
\end{align*}
while
\begin{align*}
(\gd_h^{-1}\gc_g\gd_h)\W\psi&=((\gd_h)\W\psi)^{-1}\cdot((\gc_g)\W\psi)\cdot(\gd_h)\W\psi\\
&=\left(\psi(\gd)^h\right)^{-1}\psi(\gc)^gc(\gd)^h\\
&=\psi(\gc)^{(g\psi(\gd))^h}.
\end{align*}
It follows that $\W\psi$ induces a homomorphism which we also denote by $\W\psi:$
\[
\W\psi\colon\cl\to M:\ \gc_g\mapsto \psi(\gc)^g.
\]
We have
$(\gc) c_{\gvp}\circ \W\psi=(\W\gc_1)\W\psi=\psi(\gc)^1=\psi(\gc)$.
Thus
$c_{\gvp}\circ \W\psi =\psi$. 
Similarly 
$c_{\gvp}\circ\W\gvp=\gvp$.
Next let 
\[
\W\mu\colon\cl\to M,\text{ with }c_{\gvp}\circ \W\mu=\psi,
\]
where $\W\mu$ is a normal morphism.  Then
$(\gc)c_{\gvp}\circ \W\mu=(\W\gc_1)\W\mu=(\gc)\psi$.
Also
\[
\W\mu(\W\gc_g)=\W\mu((\W\gc_1)^g)=(\W\mu(\W\gc_1))^g=\psi(\gc)^g,\quad\forall\gc\in\gC\text{ and }\forall g\in G.
\]
Since $\cl$ is generated by $\{\W\gc_g\},$ we conclude that $\W\mu=\W\psi,$ and $\W\psi$ is unique.

\setcounter{prop}{4}
\begin{remark}
Let $f\colon \gC\to \gC'$,  and consider the diagram
\[
\xymatrix{
\gC'\ar[r]^{f}\ar@/_2ex/[rrdd]_{c_{f\circ\gvp}} &\gC\ar[rr]^{\gvp}\ar[rd]_{c_{\gvp}} && G\\
&&\cl\ar[ru]^{\W\gvp}\\
&&(\gC')^{f\circ\gvp}\ar[ruu]_{\W{f\circ\gvp}}\ar[u]^{\W{f\circ c_{\gvp}}}\\
}
\]
Thus we see that $f$ induces a normal morphism $\W{f\circ c_{\gvp}}\colon(\gC')^{f\circ\gvp}\to \cl$.
\end{remark}

\section{The normal closure of an inclusion of a normal subgroup}\label{app B}
\noindent
In this appendix we prove a lemma about $\cl$ in the situation where $\gvp\colon\gC\to G$ is an inclusion
of a normal subgroup.

\begin{lemma}\label{lem normalsgp}
Assume that $\gvp$ is injective and that $\gvp(\gC)\nsg G$.
Then
\begin{enumerate}
\item
$c_{\gvp}(\gC)$ is a retract of $\cl,$ and $\cl=c_{\gvp}(\gC)\times\ker\W\gvp$;

\item
we can take $\cl=\gvp(\gC)\times\ker\W\gvp,$ 
with the action
\[
(\gvp(\gc),v)^g=(\gvp(\gd),uv^g),\text{ where }c_{\gvp}(\gc)^g=c_{\gvp}(\gd)u,\ \gd\in\gC\text{ and }u, v\in\ker\W\gvp,
\]
for all $\gc\in\gC$ and $v\in\ker\W\gvp$.
We take $c_{\gvp}$ to be the map
defined by $\gc\mapsto (\gvp(\gc),0),$  
and $\W\gvp$ the map defined by $(\gvp(\gc),v)\mapsto \gvp(\gc),$
for all $\gc\in\gC$ and $v\in\ker\W\gvp$.  In particular,
if $\cl= c_{\gvp}(\gC),$ then we can take $c_{\gvp} = \gvp$ and consequently $\cl=\gvp(\gC)$ and 
$\W\gvp=inc$ is the inclusion map.  

\item 
The following are equivalent
\begin{enumerate}
\item
$\cl=c_{\gvp}(\gC)$.

\item
$c_{\gvp}(\gC)$ is $G$-invariant in $\cl$.

\item
$\psi(\gC)$ is $G$-invariant in $M,$
for all factorization $\gC\overset{\psi} \lr M\overset{m}\lr G$ of $\gvp,$ with
$m$ a normal map.
\end{enumerate}

\item
The following are equivalent
\begin{enumerate}
\item
$\cl=\gvp(\gC)$.
\item
For all abelian groups $V$ and
any action of $G$ on $\gvp(\gC)\times V$ such that
$(\gvp(\gc), 0)^{\gvp(\gd)}=(\gvp(\gc^{\gd}),0),$ for all $\gc, \gd\in\gC,$ the subgroup
$\{(\gvp(\gc), 0)\mid\gc\in\gC\}$ of $\gvp(\gC)\times V$
is $G$-invariant.
\end{enumerate}
\end{enumerate}
\end{lemma}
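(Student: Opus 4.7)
My plan throughout is to exploit that, under the hypothesis of the lemma, $\gvp$ is itself a normal map: the $G$-conjugation action on $\gvp(\gC)$ pulls back along the injective $\gvp$ to a $G$-action on $\gC$, and one checks that (NM1) and (NM2) hold automatically. This single observation drives all four parts.

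For part (1), I apply the universal property of $\cl$ to the tautological factorization $\gC \xrightarrow{\mathrm{id}} \gC \xrightarrow{\gvp} G$, producing a unique normal morphism $r \colon \cl \to \gC$ with $c_\gvp \circ r = \mathrm{id}_\gC$ and $r \circ \gvp = \W\gvp$. Thus $r$ retracts $c_\gvp$ (in particular $c_\gvp$ is split injective), and setting $K := \ker r$ the usual retract argument gives $\cl = c_\gvp(\gC)\cdot K$ with trivial intersection. Since $\gvp$ is injective, the identity $r \circ \gvp = \W\gvp$ forces $K = \ker\W\gvp$. Citing Lemma \ref{lem widehatgC}(3) places $\ker\W\gvp$ in $Z(\cl)$, making the product direct. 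Part (2) is then a coordinate reformulation of (1): transport via the isomorphism $c_\gvp$ to identify $c_\gvp(\gC)$ with $\gvp(\gC)$, and read off the action formula from conjugation in the direct product together with centrality of $\ker\W\gvp$. The boundary case $\cl = c_\gvp(\gC)$ forces $\ker\W\gvp = 1$, at which point $c_\gvp$, $\gvp$, and the inclusion can all be identified.

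For part (3), (a)$\Rightarrow$(b) is trivial. For (b)$\Rightarrow$(a), I invoke Lemma \ref{lem gen of WgC}(1): $\cl$ is generated by the $G$-conjugates of $c_\gvp(\gC)$, so if the latter is $G$-invariant it already equals $\cl$. For (b)$\Rightarrow$(c), any factorization $\gC \xrightarrow{\psi} M \xrightarrow{m} G$ through a normal map yields, by the universal property, a $G$-equivariant normal morphism $\W\psi \colon \cl \to M$ with $c_\gvp \circ \W\psi = \psi$, so $\psi(\gC) = \W\psi(c_\gvp(\gC))$ inherits $G$-invariance. For (c)$\Rightarrow$(b), I specialize (c) to the tautological factorization $\gC \xrightarrow{c_\gvp} \cl \xrightarrow{\W\gvp} G$.

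Part (4) runs parallel to (3), restricted to split central extensions. For (a)$\Rightarrow$(b), I view the given action on $\gvp(\gC)\times V$ together with the projection $n$ as furnishing a normal map structure (the displayed condition is exactly (NM2) on the distinguished slice, and the rest follows from compatibility of the action with the direct-product group structure), then apply (3c) to the factorization $\gC \xrightarrow{(\gvp,0)} \gvp(\gC)\times V \xrightarrow{n} G$. For (b)$\Rightarrow$(a), I take $V := \ker\W\gvp$: by (2), $\cl = \gvp(\gC)\times V$ carries a canonical $G$-action, and this action satisfies the condition of (b) because (NM2) applied to $\W\gvp$ with $b = c_\gvp(\gd)$ (so $bn = \gvp(\gd)$) gives $c_\gvp(\gc)^{\gvp(\gd)} = c_\gvp(\gc)^{c_\gvp(\gd)} = c_\gvp(\gc^\gd)$. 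Invoking (b) then shows $c_\gvp(\gC)$ is $G$-invariant in $\cl$, and combining (3) with (2) delivers $\cl = \gvp(\gC)$.

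The hard part will be pinning down the precise content of (4b): the condition is stated only on the $v=0$ slice, so for (a)$\Rightarrow$(b) I will need either to interpret (b) implicitly as requiring the action to be a normal structure on the projection $n$ (the natural reading in this context) or to verify that the slice-condition, together with the hypothesis that $G$ acts by group automorphisms of $\gvp(\gC)\times V$, already forces (NM1) and (NM2) for $n$. Once that interpretive point is settled, the remainder is a direct bootstrap from the universal property of $\cl$ and the decomposition supplied by parts (1)--(2).
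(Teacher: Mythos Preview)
Your proof is correct and follows essentially the same route as the paper's. The one genuine difference is in part~(1): you first observe that $\gvp$ is itself a normal map (via the pulled-back conjugation action) and then invoke the universal property of $\cl$ to produce the retraction $r\colon\cl\to\gC$, whereas the paper writes the retraction down directly as $\W\gvp\circ\gvp^{-1}\circ c_\gvp$. These are the same map, since your $r$ satisfies $r\circ\gvp=\W\gvp$ with $\gvp$ injective; your phrasing is arguably cleaner because it makes the role of the universal property explicit. Parts~(2) and~(3) are handled identically in both.

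Your worry about~(4)(a)$\Rightarrow$(b) is well-founded, and the paper does not do better: it simply asserts ``it is easy to check that $n$ is a normal map'' and moves on. As you note, the displayed hypothesis in~(4b) only constrains the $\gvp(\gC)$-action on the $v=0$ slice, so it does not by itself force (NM1) (that the induced $G$-action on $(\gvp(\gC)\times V)/V\cong\gvp(\gC)$ is conjugation) or the full (NM2). The natural reading---and the one the paper evidently intends---is that the $G$-action is a normal structure on the projection $n\colon\gvp(\gC)\times V\to G$; under that reading your argument via~(3c) goes through exactly as in the paper. Your direction (b)$\Rightarrow$(a), taking $V=\ker\W\gvp$ and using~(2) plus (NM2) for $\W\gvp$, matches the paper's contrapositive argument.
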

\begin{proof}
(1):\quad
Since by Lemma \ref{lem gen of WgC}(2) $(\cl)\W\gvp$ is
the normal closure of $(\gC)\gvp$ is $G,$ we see that $(\cl)\W\gvp=(\gC)\gvp,$
so the map
$\W\gvp\circ\gvp^{-1}\circ c_{\gvp}$ is a well defined map on $\cl$
and $((\gc) c_{\gvp})\W\gvp\circ\gvp^{-1}\circ c_{\gvp}=(\gc)\gvp\circ\gvp^{-1}\circ c_{\gvp}=(\gc)c_{\gvp},$
for all $\gc\in\gC$. This shows that $(\gC)c_{\gvp}$ is a retract of $\cl$.
Let us now compute the kernel of the retraction $\W\gvp\circ\gvp^{-1}\circ c_{\gvp}$.
Since $\gvp$ (and hence $c_{\gvp}$) are injective, we see that this kernel is $\ker\W\gvp$.
By Lemma \ref{lem widehatgC}(3), $\ker\W\gvp\le Z(\cl),$
Hence (1) holds.
\medskip

\noindent
(2):\quad
Let $\Wt{\gC}=\gvp(\gC)\times\ker\W\gvp$.
Note that the action of $G$ on $\Wt\gC$ is
defined by
$(\gvp(\gc), v)^g=\gt\left((\gt^{-1}(\gvp(\gc), v))^g\right)$,
where $\gt\colon\cl\to\Wt\gC$ is the isomorphism
defined by $c_{\gvp}(\gc)v\mapsto (\gvp(\gc),v)$ (see part (1)).
Note further that we are using Lemma \ref{lem basic cm}(1),
so that $v^g\in\ker\W\gvp,$ for all $v\in\ker\W\gvp$.
Hence we indeed defined an action of $G$ on $\Wt\gC$.

Let $\Wt{c_{\gvp}}$
be the map defined by $\gc\mapsto (\gvp(\gc),0),$  
and $\Wt{\gvp}$ be the map defined by $(\gvp(\gc),v)\mapsto \gvp(\gc),$
for all $\gc\in\gC$ and $v\in\ker\W\gvp$.

Assume that $\gC\overset{\psi}\lr M\overset{n}\lr G$ is a factorization
of $\gvp$.  Let $\W\psi\colon\cl\to M$ be the unique map of diagram \eqref{diag fnc}, and set
\[
\Wt{\psi}\colon\Wt\gC\to M:\quad (\gvp(\gc), v)\mapsto\W\psi(c_{\gvp}(\gc)v)=\psi(\gc)\W\psi(v).
\]
Let $(\gvp(\gc),v)\in\Wt\gC,$ let $g\in G$ and write $c_{\gvp}(\gc)^g=c_{\gvp}(\gd)u,$
with $u\in\ker\W\gvp$. 
Then
\begin{gather*}
\Wt{\psi}((\gvp(\gc), v)^g) 
=\Wt{\psi} (\gvp(\gd),uv^g)=\W\psi(c_{\gvp}(\gd)uv^g)=
\W\psi(c_{\gvp}(\gc)^g)\W\psi(v^g)=(\W\psi(c_{\gvp}(\gc))^g\W\psi(v)^g\\
=(\psi(\gc)\W\psi(v))^g=(\Wt{\psi}(\gvp(\gc),v))^g.
\end{gather*}
This shows that $\Wt{c_{\gvp}}$ is an initial normal morphism over $G$.

Also,
\begin{gather*}
\Wt\gvp((\gvp(\gc),v)^g)=\Wt\gvp((\gvp(\gd),uv^g)=\gvp(\gd)=(\gd)(c_{\gvp}\circ\W\gvp)=\W\gvp(c_{\gvp}(\gd))\\
=\W\gvp(c_{\gvp}(\gc)^gu^{-1})=\W\gvp(c_{\gvp}(\gc)^g)=\W\gvp(c_{\gvp}(\gc))^g=\gvp(\gc)^g\\
=(\Wt\gvp(\gvp(\gc),v))^g.
\end{gather*}
So $\Wt\gvp$ satisfies (NM1).  Further,
\begin{equation}\label{eq nm2(1)}
c_{\gvp}(\gc)^{\gvp(\gd)}=c_{\gvp}(\gc)^{\W\gvp(c_{\gvp}(\gd)}=c_{\gvp}(\gc)^{c_{\gvp}(\gd)}=c_{\gvp}(\gc^{\gd}),\quad\forall \gc,\gd\in\gC.
\end{equation}
Notice also that for $v\in\ker\W\gvp$ and $\gd\in\gC$ we have 
\begin{equation}\label{eq nm2(2)}
v^{\gvp(\gd)}=v^{\W\gvp(c_{\gvp}(\gd)}=v^{c_{\gvp}(\gd)}=v,
\end{equation}
because $v\in Z(\cl)$.  Hence, by the definition of $\Wt\gvp$ and of the action of
$G$ on $\Wt\gC$ we get using equation \eqref{eq nm2(1)} and \eqref{eq nm2(2)},
\[
(\gvp(\gc), v)^{(\gvp(\gd), u)\Wt\gvp}=(\gvp(\gc), v)^{\gvp(\gd)}=(\gvp(\gc^{\gd}),v^{\gvp(\gc)})=(\gvp(\gc^{\gd}),v)=
(\gvp(\gc), v)^{(\gvp(\gd), u)},
\]
and we see that $\Wt\gvp$ satisfies (NM2) as well.
Thus $\Wt\gvp$ is a normal map.
Next
\[
(\gc)\Wt{c_{\gvp}}\circ\Wt\psi=(\gvp(\gc),0)\Wt\psi=(\gc)\gvp,
\]
and 
\[
(\gvp(\gc),v)\Wt\psi\circ n=n(\psi(\gc)\W\psi(v))=n(\psi(\gc))n(\W\psi(v))=\gvp(\gc)\W\gvp(v)=\gvp(\gc),
\]
for all $\gc\in\gC$ and $v\in\ker\W\gvp$.

It follows that we can replace in diagram \eqref{diag fnc}
$\cl, c_{\gvp}, \W\gvp$ with $\Wt{\gC}, \Wt{c_{\gvp}}, \Wt\gvp$ respectively,
and then the map $\Wt\psi$ renders the diagram commutative.
The proof of the uniqueness of $\Wt\psi$ is  similarly, so (2) holds. 
\medskip

\noindent
(3):\quad
(a)$\Leftrightarrow$ (b):
If $\cl=c_{\gvp}(\gC),$ then of course $c_{\gvp}(\gC)$ is $G$-invariant, while if  $c_{\gvp}(\gC)$
is $G$-invariant, then by Lemma \ref{lem gen of WgC}(1), $\cl=c_{\gvp}(\gC)$.   
\smallskip

\noindent
(b)$\Leftrightarrow$ (c):
Assume that $\psi(\gC)$ is $G$-invariant for every factorization as in (c).
Then, in particular, taking $\psi=c_{\gvp}, M=\cl$ and $n=\W\gvp,$ (c)
implies that $\cl=c_{\gvp}(\gC)$.  Conversely, assume that $\cl=c_{\gvp}(\gC)$.
Then by the commutativity of diagram \eqref{diag fnc}, and since $\W\psi$ is a normal morphism
over $G,$ we have that $(\cl)\W\psi$ is $G$-invariant in $M$.
That is, $(\gC)c_{\gvp}\circ\W\psi=(\gC)\psi$ is $G$-invariant in $M,$ and (c) holds.
\medskip

\noindent
(4):\quad 
(a)$\Rightarrow$(b):  Assume (a) holds.
Let $M:=\gvp(\gC)\times V$ be as in (b).
Let $\psi\colon \gC\to M$ and $n\colon M\to G$ be defined by
$\psi(\gc)=(\gvp(\gc),0),$ and $n(\gvp(\gc),v)=\gvp(\gc),$ for all
$\gc\in\gC$ and $v\in V$.  It is easy to check that $n$ is a normal
map and clearly $\psi\circ n=\gvp$.   By the equivalence of
(a) and (c) in (2), the subgroup $\{(\gvp(\gc), 0)\mid\gc\in\gC\}$ must be
$G$-invariant.
\smallskip

\noindent
(b)$\Rightarrow$(a):  Assume that (b) holds.  Suppose
$\cl\ne\gvp(\gC)$.  By (2) we may assume that $\cl=\gvp(\gC)\times V,$ with $V=\ker\W\gvp,$
so $V$ is abelian (see Lemma \ref{lem basic cm}(1)).  
Let $(\gvp(\gc),v)\in\cl$ and let $\gvp(\ga)\in\gvp(\gC)$.
Note that $c_{\gvp}(\gc)^{\gvp(\ga)}=c_{\gvp}(\gc)^{\W\gvp(c_{\gvp}(\ga))}=c_{\gvp}(\gc^{\ga}),$
because $\W\gvp$ is a normal map.
By the definition of the action of $G$ on $\cl$ we have
\[
(\gvp(\gc),0)^{\gvp(\ga)}=(\gvp(\gc^{\ga}),0),
\]
for all $\ga,\gc\in\gC$.
Further,
by Lemma \ref{lem gen of WgC}(1), $\cl$ is generated by $c_{\gvp}(\gC)=\{(\gvp(\gc),0)\mid \gc\in\gC\}$.
Thus $M=\cl$ violates (b). 
\end{proof}

\begin{remark}
We do not know how to fully characterize those groups $\gC$ and $G$
that satisfy the property in 4(b) of Lemma \ref{lem normalsgp}.
However, if $\gC$ is a perfect group, then clearly 
this property holds.
\end{remark}

\subsection*{Acknowledgment.}
We thank the referee for the careful reading of the paper
and for various useful remarks.



\begin{thebibliography}{99}
\bibitem[A]{A} M.~Aschbacher, {\it Finite group theory}, Cambridge Univ.~Press (1986).

\bibitem[An]{An} S.~Andreadakis, {\it Automorphisms of free groups and free nilpotent groups,}
 Proc.~London Math.~Soc.~{\bf 15} (1965), 239--268.

\bibitem[BGPT]{BGPT} R.~Brown, M.~Golasinski, T.~Porter, A.~S.~Tonks, 
{\it Spaces of maps into classifying spaces for equivariant crossed complexes,} Indag.~Mathem.~{\bf 8} (1997) 157--172.

\bibitem[BH]{BH} R.~Brown, P.~J.~Higgins, 
{\it On the connection between the second relative homotopy groups of
some related spaces,} Proc.~London Math.~Soc.~{\bf 36} (1978), 193--212.



\bibitem[BHS]{BHS} R.~Brown, P.~J.~Higgins, R.~Sivera, {\it Nonabelian algebraic topology,} 
EMS Tracts in Mathematics, {\bf 15}. European Mathematical Society (EMS), Z\"urich, 2011.

\bibitem[BK]{BK} A.~K.~Bousfield, D.~Kan, {\it homotopy limits completions and 
localizations,} Springer LNM no.~304, 1972.

\bibitem[BW1]{BW1} R.~Brown, C.~D.~Wensley, {\it On finite induced crossed modules and the homotopy 2-type of mapping cones,} Theory and Applications of Categories {\bf 1}(3) (1995) 54--71.

\bibitem[BW2]{BW2} R.~Brown, C.~D.~Wensley, {\it Computation and homotopical applications of induced crossed modules,}
J.~Symbolic Computation {\bf 35} (2003) 59--72.


\bibitem[CDFS]{CDFS} W.~Chach{\'o}lski, E.~Damian, E.D.~Farjoun, Y.~Segev.  {\it The A-core and A-cover of a group,}
J.~Algebra {\bf 321}  (2009),  no.~2, 631--666.

\bibitem[F]{F} E.~D.~Farjoun, {\it On relative principal fibrations,} preprint,  2014.


\bibitem[FS1]{FS1} E.~D.~Farjoun,  Y.~Segev, {\it Crossed modules as homotopy normal maps,} Topology and its
Applications {\bf 157} (2010), 359--368.

\bibitem[FS2]{FS2} E.~D.~Farjoun,  Y.~Segev, {\it Subnormal closure  of a homomorphism,} submitted.

\bibitem[FS3]{FS3} E.~D.~Farjoun,  Y.~Segev, {\it Relative Schur multiplier and the universal extensions of group homomorphisms,}
preprint, 2014.

\bibitem[G]{G}  T.~G.~Goodwillie, {\it Calculus.~III.~Taylor series,}
Geom.~Topol.~{\bf 7} (2003), 645--711.

\bibitem[Ha]{Ha} J.~D.~Hamkins, {\it Every group has a terminating transfinite automorphism tower,} 
Proc.~Amer.~Math.~Soc.~126  (1998),  no.~11, 3223--3226.  

\bibitem[H]{H} B.~Huppert, {\it Endliche Gruppen I.}, Berlin-Heidelberg-New-York: Springer 1967.


\bibitem[KOS]{KOS} L.~A.~Kurdachenko, J.~Otal, I.~Ya.~Subbotin,
{\it On a generalization of Baer theorem,} 
Proc.~Amer.~Math.~Soc.~{\bf 141}  (2013),  no.~8, 2597--2602. 

\bibitem[KS]{KS} I.~Kaplan  and S.~Shelah, {\it Automorphisms of centerless groups without choice,} 
Archive for Logic {\bf 48} (2009) 799--815.

\bibitem[R]{R} D.J.S.~Robinson, {\it A Course in the Theory of Groups,} Graduate Texts in Mathematics. Springer-Verlag,
1996 (2nd edition).


\bibitem[W1]{W1} J.~H.~C.~Whitehead,   
{\it On adding relations to homotopy groups,} Ann.~Math.~{\bf 41} (1941), 806--810.

\bibitem[W2]{W2} J.~H.~C.~Whitehead,
{\it Note on a previous paper entitled on adding relations to homotopy groups,}
Ann.~Math.~{\bf 47} (1946), 806--810.

\bibitem[W3]{W3} J.~H.~C.~Whitehead, {\it Combinatorial homotopy II,} Bull.~Am.~Math.~Soc. {\bf 55} (1949), 453--496.
\end{thebibliography}
\end{document}